\newcommand\cc{\mathfrak c}
\renewcommand\gg{\mathfrak g}
\newcommand\gl{\mathfrak{gl}}
\newcommand\pgl{\mathfrak{pgl}}
\newcommand\hh{\mathfrak h}
\newcommand\zz{\mathfrak z}
\newcommand\inverse{{^{-1}}}
\newcommand\ra{\rightarrow}
\newcommand\OO{\mathcal O}
\newcommand{\FF}{{\mathbb F}}
\newcommand{\GG}{{\mathbb G}}
\newcommand{\NN}{{\mathbb N}}
\newcommand{\ZZ}{{\mathbb Z}}
\newcommand{\QQ}{{\mathbb Q}}
\newcommand{\RR}{{\mathbb R}}
\renewcommand{\AA}{{\mathbb A}}
\newcommand{\tuple}[1]{{\mathbf {#1}}}
\newcommand{\ccc}[1]{{\overline{#1}^{\rm c}}}
\DeclareMathOperator{\Char}{char}
\DeclareMathOperator{\GL}{GL}
\DeclareMathOperator{\Gal}{Gal}
\DeclareMathOperator{\SL}{SL}
\DeclareMathOperator{\PGL}{PGL}
\DeclareMathOperator{\Lie}{Lie}
\DeclareMathOperator{\IM}{Im}
\DeclareMathOperator{\End}{End}
\DeclareMathOperator{\Mat}{Mat}
\numberwithin{equation}{section}
\newtheorem{thm}[equation]{Theorem}
\newtheorem{lem}[equation]{Lemma}
\newtheorem{cor}[equation]{Corollary}
\newtheorem{prop}[equation]{Proposition}
\theoremstyle{definition}
\newtheorem{defn}[equation]{Definition}
\newtheorem{ex}[equation]{Example}
\newtheorem{exs}[equation]{Examples}
\theoremstyle{remark}
\newtheorem{rem}[equation]{Remark}
\theoremstyle{remark}
\newtheorem{rems}[equation]{Remarks}
\newtheorem{qn}[equation]{Question}
\newcommand{\ovl}{\overline}
\subjclass[2010]{20G15 (14L24)}
\keywords{Affine $G$-variety, cocharacter-closed orbit, rationality}
\title[Cocharacter-closure and the rational Hilbert-Mumford Theorem]
{Cocharacter-closure and the rational Hilbert-Mumford Theorem}
\author[M.\  Bate]{Michael Bate}
\address
{Department of Mathematics,
University of York,
York YO10 5DD,
United Kingdom}
\email{michael.bate@york.ac.uk}
\author[S. Herpel]{Sebastian Herpel}
\address
{Fakult\"at f\"ur Mathematik,
Ruhr-Universit\"at Bochum,
D-44780 Bochum, Germany}
\email{sebastian.herpel@rub.de}
\author[B.\ Martin]{Benjamin Martin}
\address
{Department of Mathematics,
University of Aberdeen,
King's College,
Fraser Noble Building,
Aberdeen AB24 3UE,
United Kingdom}
\email{b.martin@abdn.ac.uk}
\author[G. R\"ohrle]{Gerhard R\"ohrle}
\address
{Fakult\"at f\"ur Mathematik,
Ruhr-Universit\"at Bochum,
D-44780 Bochum, Germany}
\email{gerhard.roehrle@rub.de}
\begin{document}

\begin{abstract}
For a field $k$, let $G$ be a reductive $k$-group and
$V$ an affine $k$-variety on which $G$ acts.
Using the notion of cocharacter-closed $G(k)$-orbits in $V$,  
we prove a rational version of the celebrated 
Hilbert-Mumford Theorem from geometric invariant theory.
We initiate a study of applications 
stemming from this rationality tool.
A number of examples are discussed to illustrate the concept of 
cocharacter-closure and to highlight 
how it differs from the usual Zariski-closure.
\end{abstract}

\maketitle

\setcounter{tocdepth}{1}
\tableofcontents

\section{Introduction}
\label{sec:intro}

Let $G$ be a (possibly non-connected) reductive algebraic group over an algebraically closed field $k$, and let $V$ be an affine variety over $k$ on which $G$ acts.  A central problem in geometric invariant theory is to understand the structure of the set of orbits of $G$ on $V$.  The closed orbits are particularly important, because they can be identified with the points of the quotient variety $V/\!/G$.
Given $v\in V$, it is well known that the closure $\overline{G\cdot v}$ of the orbit $G\cdot v$ contains a unique closed orbit ${\mathcal O}$.  The Hilbert-Mumford Theorem says there exists a cocharacter $\lambda$ of $G$ such that the limit $\lim_{a\to 0} \lambda(a)\cdot v$ exists and lies in ${\mathcal O}$---in fact, we can replace ${\mathcal O}$ with any closed $G$-stable subset of $V$ that meets $\overline{G\cdot v}$ \cite[Thm.\ 1.4]{kempf}.  This gives a characterization of the closed orbits in terms of cocharacters: if the orbit $G\cdot v$ is not closed, then there is a cocharacter $\lambda$ of $G$ such that $\lim_{a\to 0} \lambda(a)\cdot v$ exists and lies outside $G\cdot v$.  Conversely, if $G\cdot v$ is closed then $\lim_{a\to 0} \lambda(a)\cdot v$ lies in $G\cdot v$ for all $\lambda$ such that the limit exists (cf.\ Section~\ref{subsec:Gvars}).

A strengthening of the Hilbert-Mumford Theorem due to Hesselink \cite{He}, Kempf \cite{kempf} and Rousseau \cite{rousseau} 
shows that if $G\cdot v$ is not closed, then there is a class of so-called ``optimal'' cocharacters $\lambda_{\rm opt}$ such that the limit
$\lim_{a\to 0} \lambda_{\rm opt}(a)\cdot v$ exists in $V$ but lies outside $G\cdot v$.  Each cocharacter $\lambda_{\rm opt}$ enjoys some nice properties: for instance, if $G$ is connected, then the parabolic subgroup $P_{\lambda_{\rm opt}}$ associated to 
$\lambda_{\rm opt}$ contains the stabiliser $G_v$.  Moreover, if $G$, $V$ and the $G$-action are defined over a perfect 
subfield $k_0$ of $k$, $k/k_0$ is algebraic and $v\in V(k_0)$, then $\lambda_{\rm opt}$ is ${\rm Gal}(k/k_0)$-fixed 
and hence is defined over $k_0$.  
The (strengthened) Hilbert-Mumford Theorem has become an indispensable tool 
in algebraic group theory and has numerous 
applications in geometric invariant theory and beyond \cite{mumford}: e.g., geometric complexity theory \cite{MS1}, \cite{MS2}, nilpotent and unipotent elements of reductive groups \cite{CP}, \cite{Premet2}, \cite{Premet1}, \cite{bate}, moduli spaces of bundles \cite{Gomez}, good quotients in geometric invariant theory \cite{Hausen}, Hilbert schemes \cite{Nakajima}, moduli spaces of sheaves \cite{HL}, the structure of the Horn cone \cite{Ressayre}, K\"ahler geometry \cite{Szekelyhidi}, filtrations for representations of quivers \cite{Zamora}, symplectic quotients \cite[App.~2C]{mumford}, degenerations of modules \cite{zwara} and $G$-complete reducibility \cite{BMR}, \cite{GIT}.

Now suppose $k$ is an arbitrary field, not necessarily algebraically closed.
The orbit $G\cdot v$ is a union of $G(k)$-orbits.
The structure of this set of $G(k)$-orbits can be very intricate.
For instance, if $w\in G\cdot v$ and $v,w$ are $k$-points then one can ask
whether $w$ is $G(k)$-conjugate to $v$.
The answer is no in general; if $k$ is perfect then this is controlled by the Galois 1-cohomology of $G_v(k_s)$ (see Remark \ref{rem:strongerrationality}(iv) or \cite{berhuy:galois}). 
Things only get more complicated when one considers the $G(k)$-orbits that are contained in $\overline{G\cdot v}$.  Orbits of actions of reductive groups over non-algebraically closed fields have come under increasing attention, particularly from number theorists.
For instance, suppose $k$ is a global function field, let $v\in V(k)$ and let $C$ be the set of all $w\in V(k)$ such that $w$ is $G(k_\nu)$-conjugate to $v$ for every completion $k_\nu$ of $k$; then Conrad showed that $C$ is a finite union of $G(k)$-orbits \cite[Thm.~1.3.3]{conrad}.
Bremigan studied the strong topology of the orbits when $k$ is a local field \cite{Bremigan}
(see also Remark \ref{rem:strongerrationality}(v) below).
Now let $V$ be the Lie algebra $\gg$ with the adjoint action of $G$. 
When $k$ is perfect, J.~Levy  proved that if $x\in \gg$, then any two elements of the form $\lim_{a\to 0} \lambda(a)\cdot x$ that are semisimple are $G(k)$-conjugate
\cite{levy1, levy2}. 
We extend Levy's result below, see Remark \ref{rem:Levy}(ii).
W.~Hoffmann asked a question about limits $\lim_{a\to 0} \lambda(a)\cdot x$ 
when $k$ is a global field of arbitrary characteristic,
see Remark \ref{rems:hoff}(ii).  
Both Levy and Hoffmann were motivated by constructions involving orbital integrals and the Selberg trace formula.

In this paper we develop a theory of $G(k)$-orbits for $k$ an arbitrary field, building on
earlier work of three of us with Tange \cite{GIT}.  To make our results as general as possible,
we do not endow the field $k$ with any extra structure.  Moreover, we need not always assume that $v$ is a $k$-point.  
The most difficult problems arise when $k$ is non-perfect.
However, even for groups over perfect fields our methods
apply; see Theorem \ref{thm:anisotropy}(ii).
Observe that for perfect fields the hypotheses on the stabilizers
needed for our main results hold automatically 
(e.g., see Proposition  \ref{prop:perfectorseparable}).

As noted above, the concept of a closed orbit is fundamental in geometric invariant theory over algebraically closed fields.  A first problem is to devise a suitable analogue of this idea for $G(k)$-orbits.  One can define the notion of a $k$-orbit over $G$ \cite[10.2, Def.\ 4]{neron}, or study the Zariski closure of a $G(k)$-orbit, but such constructions do not appear to be helpful here (cf.\ the discussion in \cite[Rem.\ 3.9]{GIT}).  Instead we adopt an approach involving cocharacters.  Let $Y_k(G)$  denote the set of $k$-defined cocharacters of $G$.  In \cite[Def.\ 3.8]{GIT}, we made the following definition.

\begin{defn}
\label{defn:cocharclosedorbit}
The orbit $G(k)\cdot v$ is \emph{cocharacter-closed over $k$} 
provided for all $\lambda\in Y_k(G)$,
if $v':= \lim_{a\to 0} \lambda(a)\cdot v$ exists, then $v'\in G(k)\cdot v$.
\end{defn}

We now extend this definition to 
cover arbitrary subsets of $V$, 
and introduce the cocharacter-closure of a subset of $V$:
\begin{defn}
\label{defn:cocharclosure}
(a) Given a subset $X$ of $V$, we say that $X$ 
is \emph{cocharacter-closed (over $k$)} if for every $v \in X$ and
every $\lambda \in Y_k(G)$ such that $v':=\lim_{a \to 0} \lambda(a) \cdot v$ 
exists, $v' \in X$.
Note that this definition coincides with 
the one above if $X = G(k) \cdot v$ for some $v \in V$.\\
(b) Given a subset $X$ of $V$, we define the 
\emph{cocharacter-closure of $X$ (over $k$)}, denoted $\ccc{X}$,
to be the smallest subset of $V$
such that $X \subseteq \ccc{X}$ and $\ccc{X}$ is cocharacter-closed over $k$.  (This makes sense because the intersection of cocharacter-closed subsets is clearly cocharacter-closed.)
\end{defn}

It follows from the Hilbert-Mumford Theorem that 
$G\cdot v$ is cocharacter-closed over $\ovl{k}$ if and only if $G\cdot v$ is closed.  
It is obvious that $\ccc{G\cdot v}$ is contained in $\ovl{G\cdot v}$.  
Note, however, that this containment can be proper: e.g., see Example \ref{ex:g2}.

Our first main result is a rational version of the Hilbert-Mumford Theorem.

\begin{thm}
\label{thm:rat_HMT}
Let $v \in V$.  Then there is a unique cocharacter-closed $G(k)$-orbit $\OO$
inside $\ccc{G(k)\cdot v}$.  Moreover, there exists $\lambda\in Y_k(G)$
such that $\lim_{a\to 0} \lambda(a)\cdot v$ exists and lies in $\OO$.
\end{thm}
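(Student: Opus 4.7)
I would prove existence of $\OO$ and $\lambda$ by an iterated descent, and then deduce uniqueness of $\OO$ by invoking the classical Hilbert--Mumford Theorem over $\overline{k}$ together with a rationality promotion argument.

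\emph{Existence.} If $G(k)\cdot v$ is already cocharacter-closed over $k$, set $\OO = G(k)\cdot v$ and take $\lambda$ trivial. Otherwise pick $\lambda_0\in Y_k(G)$ with $v_1 := \lim_{a\to 0}\lambda_0(a)\cdot v$ existing and $v_1\notin G(k)\cdot v$. I would first verify that $\ccc{G(k)\cdot v}$ is $G(k)$-stable (since a $G(k)$-conjugate of a $k$-defined cocharacter is again $k$-defined, a $G(k)$-translate of a cocharacter-closed set is cocharacter-closed); in particular $G(k)\cdot v_1\subseteq\ccc{G(k)\cdot v}$. Iterating produces a sequence $v=v_0,v_1,v_2,\ldots$ in $\ccc{G(k)\cdot v}$ with each $v_{i+1}$ a $k$-rational cocharacter limit of $v_i$ lying outside $G(k)\cdot v_i$.

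Termination I would control by the monotone invariant $\dim\overline{G\cdot v_i}$. Since $v_{i+1}\in\overline{G\cdot v_i}$ the dimension cannot increase; the delicate case is when $v_{i+1}$ lies in the same geometric orbit as $v_i$. Here I would invoke a rationality statement---provable via the Kempf--Rousseau machinery of optimal cocharacters and the techniques of \cite{GIT}---asserting that a $k$-defined cocharacter limit of $v_i$ landing in $G\cdot v_i$ is already $G(k)$-conjugate to $v_i$, contradicting the choice of $\lambda_i$. Strict decrease in $\dim\overline{G\cdot v_i}$ then forces the procedure to terminate at a cocharacter-closed $G(k)$-orbit $\OO\subseteq\ccc{G(k)\cdot v}$. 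To replace the chain $\lambda_0,\ldots,\lambda_{n-1}$ produced by the descent by a single $\lambda\in Y_k(G)$ whose limit from $v$ lies in $\OO$, I would conjugate the $\lambda_i$ into a common maximal $k$-torus of the parabolic $P_{\lambda_0}$ and form a suitable positive integer combination; this works because each $v_{i+1}$ is fixed by $\lambda_i$, so the contributions of the higher-index cocharacters do not spoil the earlier limits.

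\emph{Uniqueness.} Suppose $\OO_1,\OO_2\subseteq\ccc{G(k)\cdot v}$ are both cocharacter-closed $G(k)$-orbits. By existence there exist $\lambda_j\in Y_k(G)$ with $w_j := \lim_{a\to 0}\lambda_j(a)\cdot v\in\OO_j$. The classical Hilbert--Mumford Theorem forces $w_1$ and $w_2$ into the unique closed $G$-orbit inside $\overline{G\cdot v}$, so $w_1$ and $w_2$ are $G(\overline{k})$-conjugate. Promoting this to $G(k)$-conjugacy---which yields $\OO_1=\OO_2$---is the second application of the same rationality arguments. I expect this promotion of geometric to $k$-rational conjugacy in the presence of non-trivial stabilisers to be the principal obstacle throughout, and it is precisely the reason why the finer rationality inputs developed in \cite{GIT}, rather than only the classical Hilbert--Mumford Theorem, are needed.
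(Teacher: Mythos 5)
Your existence argument contains a genuine gap at the termination step. You propose using $\dim\overline{G\cdot v_i}$ as your monotone quantity, and to force strict decrease you need the claim that if $v_{i+1}=\lim_{a\to 0}\lambda_i(a)\cdot v_i$ lies in $G\cdot v_i$ then it already lies in $G(k)\cdot v_i$. That promotion of geometric to rational conjugacy of a cocharacter limit is precisely Theorem~\ref{thm:MCkdefinedstabilizer} / Corollary~\ref{cor:MCkdefinedstabilizer}(i), which require $G_v^0$ to be $k$-defined; the theorem you are proving assumes no such thing, and Remark~\ref{rem:strongerrationality}(i) records that the promotion \emph{fails} without that hypothesis. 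Your appeal to ``Kempf--Rousseau machinery of optimal cocharacters'' does not rescue it either, since Remark~\ref{rem:no_optimality} says explicitly that no rational version of the optimality theorem is known over arbitrary fields. The paper avoids all of this by choosing a different termination invariant: Proposition~\ref{prop:technical} shows that one can always choose the next destabilising cocharacter inside $Y_k(C_G(S))$ (with $S$ a maximal $k$-split torus of the current stabiliser) so that the $k$-rank of the stabiliser \emph{strictly increases}; that invariant is bounded, is purely rational, and requires no hypothesis on $G_v$. It also makes the ``combine the chain into a single $\lambda$'' step clean, because each new cocharacter automatically commutes with the image of the previous one; your proposal to conjugate the $\lambda_i$ into a common maximal torus of $P_{\lambda_0}$ is not obviously available, since the later $\lambda_i$ need not evaluate in $P_{\lambda_0}$.

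Your uniqueness argument is based on a false premise. You argue that $w_1$ and $w_2$ are ``forced into the unique closed $G$-orbit inside $\overline{G\cdot v}$,'' but a cocharacter-closed $G(k)$-orbit need not lie inside a Zariski-closed geometric orbit: Example~\ref{ex:SL2,GL2}(ii) (and Remark~\ref{rem:anisotropy}(ii)) exhibits $v\in V(k)$ with $G(k)\cdot v$ cocharacter-closed over $k$ while $G\cdot v$ is \emph{not} closed. Moreover the classical Hilbert--Mumford Theorem only asserts the \emph{existence} of a cocharacter whose limit lands in the closed orbit, not that every destabilising cocharacter does so; nothing forces $w_j$ into that orbit. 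And even if it did, you would again need the geometric-to-rational conjugacy promotion, with the same obstruction. The paper's uniqueness proof is purely rational and avoids both problems: conjugate $\lambda_1$ and $\lambda_2$ (by elements of $R_u(P_{\lambda_1})(k)$ and $R_u(P_{\lambda_2})(k)$) so that they commute in a common maximal $k$-torus, apply Lemma~\ref{lem:twoways} to produce a joint limit $v'=\lim_{a\to 0}\lambda_1(a)\cdot v_2 = \lim_{a\to 0}\lambda_2(a)\cdot v_1$, and then use cocharacter-closedness of each $\OO_j$ to conclude $v'\in\OO_1\cap\OO_2$, hence $\OO_1=\OO_2$.
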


By a standard fact, the closure of a geometric 
$G$-orbit is again a union of $G$-orbits, \cite[I 1.8 Prop.]{borel}.
Thanks to Lemma \ref{lem:transitiveclosure}(i), 
the rational counterpart holds for the
cocharacter-closure 
of a $G(k)$-orbit in $V$. Therefore, 
we can mimic the usual ``degeneration'' partial order on the $G$-orbits in $V$
in this rational setting:

\begin{defn}
\label{defn:preorder}
Given $v,v' \in V$, 
we write $G(k) \cdot v'\prec G(k) \cdot v$ if $v' \in \ccc{G(k) \cdot v}$.
\end{defn}
Then it is clear that $\prec$ is reflexive and transitive, so $\prec$ gives a preorder on the set of $G(k)$-orbits in $V$.
In general, the behavior of the $G(k)$-orbits can be quite pathological;
e.g., see Example \ref{ex:rsquares}, Remark \ref{rem:strongerrationality}(i),
Example \ref{ex:SL2,GL2}(ii) and Remark \ref{rems:hoff}(ii).

Our second main result holds under some mild hypotheses on the 
stabilizer $G_v$ of $v$ in $G$.

\begin{thm} 
\label{thm:Gvkdef}
Let $v \in V$ and suppose that $G_v$ is $k$-defined. Then the following hold:
\begin{itemize}
\item[(i)] If $G \cdot v$ is Zariski-closed, then $G(k)\cdot v$ is cocharacter-closed
over $k$.
\item[(ii)]
Let $k'/k$ be an algebraic field extension and suppose that $G(k')\cdot v$
is cocharacter-closed over $k'$. Then $G(k)\cdot v$ is cocharacter-closed over $k$.
Moreover, the converse holds provided that $v \in V(k)$ and $k'/k$ is separable.
\item[(iii)]
Let $S$ be a $k$-defined torus of $G_v$ and set $L=C_G(S)$. Then
$G(k) \cdot v$ is cocharacter-closed over $k$ if and only if
$L(k) \cdot v$ is cocharacter-closed over $k$.
\item[(iv)] 
Let $w \in V$ and suppose that both
$G(k) \cdot w \prec G(k) \cdot v$ and $G(k) \cdot v \prec G(k) \cdot w$.
Then $G(k) \cdot v = G(k) \cdot w$.
\end{itemize}
\end{thm}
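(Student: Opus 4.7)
The plan is to treat parts (i)--(iv) in sequence, each reducing to Theorem~\ref{thm:rat_HMT} plus the following ``destabilisation lemma'' that I would invoke from \cite{GIT} and earlier parts of the paper: if $G_v$ is $k$-defined and $\lambda \in Y_k(G)$ has limit $v' := \lim_{a \to 0}\lambda(a)\cdot v$ lying inside the $G$-orbit $G\cdot v$, then there is a uniquely determined $u \in R_u(P_\lambda)$ with $u\cdot v' = v$, and uniqueness combined with the $k$-definedness of $R_u(P_\lambda)$ and $G_v$ forces $u \in R_u(P_\lambda)(k) \subseteq G(k)$.

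For (i), Zariski-closedness of $G\cdot v$ places every $\lambda$-limit already inside $G\cdot v$, so the destabilisation lemma immediately produces $u\in R_u(P_\lambda)(k)$ with $u\cdot v' = v$, whence $v'\in G(k)\cdot v$. For the forward direction of (ii), any $\lambda\in Y_k(G)\subseteq Y_{k'}(G)$ has its limit $v'\in G(k')\cdot v\subseteq G\cdot v$ by the $k'$-hypothesis, and the destabilisation lemma applied over $k$ itself then delivers a $k$-rational destabiliser. The converse of (ii) (with $v\in V(k)$ and $k'/k$ separable) is a standard Galois-descent argument using the identity $Y_{k'}(G)^{\Gal(k'/k)} = Y_k(G)$ for separable $k'/k$.

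For (iii), the ``only if'' direction is immediate: $Y_k(L)\subseteq Y_k(G)$, and any limit from $\mu\in Y_k(L)$ is $S$-fixed, so the $G(k)$-element conjugating it back to $v$ can be refined to an $L(k)$-element via the $k$-rational conjugacy of $S$ inside the $k$-defined stabiliser. For ``if'', I would take a hypothetical $\lambda\in Y_k(G)$ whose limit $v'$ lies outside $G(k)\cdot v$ and produce a $G_v(k)$-conjugate $\lambda'=g\lambda g^{-1}$ (with $g\in G_v(k)$) that commutes with $S$, i.e.\ lies in $Y_k(L)$; this uses the $k$-rational conjugacy of $S$ and a suitable maximal $k$-torus inside the $k$-defined group $G_v$. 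The modified cocharacter $\lambda'$ has limit $g\cdot v'\notin G(k)\cdot v$ (since $g\in G_v(k)\subseteq G(k)$), contradicting $L(k)$-cocharacter-closedness of $v$. Arranging this conjugacy $k$-rationally over a non-perfect $k$ is the main technical obstacle.

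For (iv), I would first use $\ccc{G(k)\cdot v}\subseteq\overline{G\cdot v}$ (and symmetrically) to get $\overline{G\cdot v}=\overline{G\cdot v'}$; both $G$-orbits are then open dense subsets of this common irreducible closure, hence coincide, giving $v'\in G\cdot v$. Next, realising $v'$ through a finite chain $v=v_0,v_1,\ldots,v_n=v'$ of cocharacter-degenerations (via Lemma~\ref{lem:transitiveclosure}(i)), the dimensions of the successive $G$-orbits are non-increasing and strictly decrease whenever a step leaves the current orbit, so the equality $\dim G\cdot v=\dim G\cdot v'$ forces the chain to stay entirely inside $G\cdot v$. Iterating the destabilisation lemma along the chain then produces the required $G(k)$-conjugator from $v$ to $v'$. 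The uniform obstacle across all four parts is the $k$-rationality of the destabilising $R_u(P_\lambda)$-element, which is precisely what the $k$-definedness of $G_v$ delivers.
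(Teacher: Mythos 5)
Your overall architecture is reasonable, but the central ``destabilisation lemma'' is mis-stated and its proposed proof does not go through; and two of the four parts rely on arguments that are either incorrect or much too thin.

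The crucial error is the uniqueness claim: if $v' = \lim_{a\to 0}\lambda(a)\cdot v$ lies in $G\cdot v$, there is \emph{not} a unique $u\in R_u(P_\lambda)$ with $u\cdot v' = v$. The set of such $u$ is a coset of $R_u(P_\lambda)_{v'}$, which is typically nontrivial (it can contain all of $R_u(P_\lambda)\cap G_{v'}$). Since uniqueness fails, the Galois-fixed-point argument you sketch (``$\gamma(u)$ satisfies the same equation, hence $\gamma(u)=u$'') collapses, and you cannot conclude $u\in R_u(P_\lambda)(k)$ this way. The paper's Theorem~\ref{thm:MCkdefinedstabilizer} proves the rational $R_u(P_\lambda)(k)$-conjugacy by an entirely different route: it first produces $u\in R_u(P_\lambda)$ (over $\bar k$, via Theorem~\ref{thm:Ruconj}), observes that $u\cdot\lambda$ lies in $G_v^0$, shows $P_{u\cdot\lambda}(G_v^0)$ is $k$-defined (via the smoothness result Proposition~\ref{prop:smoothintersection}), picks a $k$-defined maximal torus $S$ inside it, and then conjugates $\lambda$ by a $k$-rational element of $R_u(P_\lambda)$ so that $\IM(\lambda)\subseteq S\subseteq G_v$, which forces $v'=v$. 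This is a genuinely different mechanism and you cannot substitute an abstract uniqueness argument for it.

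For the converse direction of (ii) (Galois \emph{ascent}: cocharacter-closed over $k$ implies over $k'$), the claim that this is ``a standard Galois-descent argument using $Y_{k'}(G)^{\Gal(k'/k)}=Y_k(G)$'' is a serious gap. If $\lambda'\in Y_{k'}(G)$ properly destabilises $v$ over $k'$, $\lambda'$ will generally not be $\Gal(k'/k)$-fixed, so the identity is of no direct use. The paper's proof of Theorem~\ref{thm:descent+ascent}(i) is an induction on $\dim G$ that splits into two cases (whether $\lambda'$ properly destabilises $v$ over $\bar k$ or not), invokes Lemma~\ref{lem:geom_galois_ascent}---which in turn rests on the uniform $S$-instability machinery from~\cite{GIT}---and uses Theorem~\ref{thm:Ruconj} over $\bar k$, Theorem~\ref{thm:leviascentdescent}(ii), and Lemma~\ref{lem:max_tor_ascent} in the inductive step. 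None of this is recoverable from a Galois-fixed-point observation.

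The ``if'' direction of (iii) (Levi ascent) is also not correct as written. You propose to $G_v(k)$-conjugate a destabilising $\lambda\in Y_k(G)$ to make it commute with $S$. But $\IM(\lambda)$ need not lie in $G_v$ at all (that would force $\lambda$ to fix $v$, contrary to $\lambda$ being a proper destabiliser), so conjugation by $G_v(k)$ has no hope of moving $\lambda$ into $Y_k(L)$. The actual proof uses Proposition~\ref{prop:technical} (which replaces $\lambda$ with a carefully constructed $\tau\in Y_k(C_G(S))$ by a rank-increasing argument involving choosing a common maximal torus for $P_\lambda\cap P_\mu$ and applying Lemma~\ref{lem:twoways}), and in the general case (where $S$ is $k$-defined but not $k$-split) the paper first reduces to $k=k_s$ via Theorem~\ref{thm:k-point} before applying Theorem~\ref{thm:leviascentdescent}. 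Your sketch does not capture any of this. Part~(iv) is structured sensibly and would follow once the destabilisation-lemma replacement is in place, although you should note, as the paper does, that at each step the $k$-definedness of $G_{v_i}^0$ must be propagated down the chain.
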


In fact, we prove stronger versions of the results of Theorem \ref{thm:Gvkdef}
in Proposition~\ref{prop:galois_descent}, Theorems \ref{thm:descent+ascent} and \ref{thm:MCkdefinedstabilizer} and
Corollary \ref{cor:MCkdefinedstabilizer} below.
Note that 
the rationality condition on the centralizer $G_v$ 
in Theorem \ref{thm:Gvkdef} 
is satisfied in many instances, e.g., if $v\in V(k)$ and $k$ is perfect (see Proposition \ref{prop:perfectorseparable}).

Recall that $G$ is $k$-anisotropic provided $Y_k(G) = \{0\}$.  Part (i) of the next theorem gives a characterization of $k$-anisotropic reductive groups over an arbitrary field $k$ in terms of cocharacter-closed orbits.  In the special case when $k$ is perfect, we recover in part (ii) a result of Kempf \cite[Thm.\ 4.2]{kempf}.

\begin{thm} 
\label{thm:anisotropy}
\begin{itemize}
\item[(i)] 
$G$ is $k$-anisotropic if and only if for every $k$-defined affine $G$-variety $W$ and every 
$w \in W(k)$, the orbit $G(k) \cdot w$ is cocharacter-closed over $k$.
\item[(ii)] 
Suppose $k$ is perfect. Then $G$ is $k$-anisotropic if and only if for every
$k$-defined affine $G$-variety $W$ and every  
$w \in W(k)$, the orbit $G \cdot w$ is closed in $W$.
\end{itemize}
\end{thm}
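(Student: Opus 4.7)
For part (i), the forward direction is immediate: if $Y_k(G)=\{0\}$, then the only $\lambda\in Y_k(G)$ is the trivial cocharacter, so for every $k$-defined affine $G$-variety $W$ and every $w\in W(k)$, $\lim_{a\to 0}\lambda(a)\cdot w=w\in G(k)\cdot w$, and $G(k)\cdot w$ is tautologically cocharacter-closed over $k$. So the content lies in the converse, which I will prove by contraposition: starting from a nontrivial $\lambda\in Y_k(G)$, the plan is to manufacture an explicit witness $(W,w)$ with $w\in W(k)$ whose $G(k)$-orbit fails to be cocharacter-closed over $k$.

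To build the witness, I would take a faithful $k$-defined linear representation $\rho\colon G\to \GL(W)$ (these exist for every affine $k$-group). Then $\rho\circ\lambda$ is a nontrivial $k$-defined cocharacter of $\GL(W)$, so the associated weight-space decomposition $W=\bigoplus_{n\in\ZZ}W_n$ is $k$-defined and has $W_n\ne 0$ for some $n\ne 0$. Replacing $\lambda$ by $-\lambda$ if necessary I may assume $n>0$, and then pick any nonzero $w\in W_n(k)$ (which exists since $W_n$ is a $k$-subspace of positive dimension). For this $w$ one has $\lim_{a\to 0}\lambda(a)\cdot w=0$, and since the $G$-action on $W$ is linear with $w\ne 0$, this limit lies outside $G(k)\cdot w$, yielding the required failure.

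For part (ii), the plan is to combine (i) with Theorem \ref{thm:Gvkdef}. Perfectness of $k$ ensures via Proposition \ref{prop:perfectorseparable} that every stabilizer $G_w$ with $w\in W(k)$ is $k$-defined, so Theorem \ref{thm:Gvkdef} is available throughout. If every orbit $G\cdot w$ is Zariski-closed, then Theorem \ref{thm:Gvkdef}(i) yields cocharacter-closure of $G(k)\cdot w$ over $k$, and part (i) of the present theorem forces $G$ to be $k$-anisotropic. Conversely, if $G$ is $k$-anisotropic, part (i) gives cocharacter-closure of $G(k)\cdot w$ over $k$ for every $w\in W(k)$; the converse part of Theorem \ref{thm:Gvkdef}(ii) applied with $k'=\ovl k$ (legitimate since $w\in W(k)$ and $\ovl k/k$ is separable) promotes this to cocharacter-closure of $G(\ovl k)\cdot w$ over $\ovl k$, which by the classical Hilbert-Mumford theorem is equivalent to $G\cdot w$ being Zariski-closed.

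The main obstacle I anticipate is the construction in the converse of (i), where one must concretely exhibit a $G$-variety witnessing the failure of cocharacter-closure; the strategy above depends on two standard rationality inputs, namely the existence of a faithful $k$-defined representation and the $k$-rationality of weight spaces under a $k$-defined cocharacter. Once (i) is in hand, part (ii) becomes essentially formal, transferring cocharacter-closure between $k$ and $\ovl k$ via Theorem \ref{thm:Gvkdef} and invoking the classical Hilbert-Mumford theorem as the bridge to Zariski-closure.
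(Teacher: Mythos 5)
Your proposal is correct and follows essentially the same route as the paper. The forward direction of (i) is the same tautology; your explicit witness for the converse (faithful $k$-defined $\rho\colon G\hookrightarrow\GL(W)$, $k$-defined weight decomposition of $W$ under $\rho\circ\lambda$, nonzero $k$-point $w$ in a positive weight space with $\lim_{a\to 0}\lambda(a)\cdot w=0\notin G\cdot w$) is precisely the argument the paper delegates to \cite[Lem.~10.1]{Birkes}; and your derivation of (ii) from (i), Theorem~\ref{thm:Gvkdef} and the Hilbert--Mumford theorem matches the paper's one-line indication. One cosmetic remark: Proposition~\ref{prop:perfectorseparable} as stated only asserts that $G_w^0$ is $k$-defined, whereas Theorem~\ref{thm:Gvkdef} is phrased for $G_w$ $k$-defined --- but for $k$ perfect and $w\in W(k)$ the stronger conclusion does hold (it is exactly the content of the Springer result cited in that proposition's proof), so your appeal to it is sound.
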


Part (ii) of Theorem \ref{thm:anisotropy} follows from part (i), 
Theorem \ref{thm:Gvkdef}(ii) and the Hilbert-Mumford Theorem.
Observe that it suffices to consider the case when $W$ is 
a $k$-defined rational $G$-module,
cf.~Remark \ref{rem:linear}.
Characterizing anisotropy over perfect fields 
in terms of closed orbits
was a question of Borel, \cite[Rem.\ 8.8 (d)]{Borel1}.  
As noted above, this question was answered by Kempf.
Birkes \cite{Birkes} proved Theorem \ref{thm:anisotropy}(ii) over
the reals and number fields, and Bremigan \cite{Bremigan} proved 
it for $p$-adic fields.
Note that Theorem \ref{thm:anisotropy}(ii) fails for 
non-perfect fields; see Remark \ref{rem:anisotropy}(ii).

\smallskip
The notion of a cocharacter-closed $G(k)$-orbit has already
proved very useful in the context of
Serre's notion of $G$-complete reducibility over $k$,
see Section \ref{sec:Gcr}.
In \cite[Thm.\ 5.9]{GIT} we gave a
geometric characterisation of the latter
using the former.
In Theorem~\ref{thm:crvscocharclosed}, we 
strengthen this result by removing the 
connectedness assumption on $G$ from \cite[Thm.\ 5.9]{GIT}.

In Corollary \ref{cor:prettycrdescent}, we prove a general 
Galois descent result for $G$-complete reducibility for arbitrary
algebraic field extensions $k'/k$ under very mild assumptions
on $\Char(k)$ which guarantee smoothness 
of centralizers of subgroups, \cite[Thm.\ 1.1]{herpel}.
General results of this nature were previously only known 
when both fields are algebraically closed or
perfect, \cite[Thms.~5.3, 5.8]{BMR}, or else when the 
extension $k'/k$ is separable, \cite[Thm.~5.11]{GIT}.

\smallskip
The paper is organized as follows.
We spend most of Section \ref{sec:prelims} recalling some results from \cite{GIT}.
In Section \ref{sec:cocharclosure},  we 
discuss the concept of cocharacter-closure in detail, and 
introduce the notion of accessibility of $G(k)$-orbits
and its relation to the cocharacter-closure of a $G(k)$-orbit,
see Lemma \ref{lem:transitiveclosure}.

Theorem~\ref{thm:rat_HMT} is proved in Section \ref{sec:rationalHMT} (Theorem~\ref{thm:uniqueaccessible}).
This is followed in Section \ref{sec:up_and_down} by a discussion of 
various ascent/descent results for field extensions on the one hand
and for Levi subgroups on the other:
see Theorems \ref{thm:leviascentdescent} and \ref{thm:descent+ascent}, 
which prove Theorem~\ref{thm:Gvkdef}(ii) (second assertion) and (iii).
A technical result needed in the proof
of Theorem \ref{thm:descent+ascent} is postponed to 
Theorem \ref{thm:k-point} in Section
\ref{sec:k-point}.

Section \ref{sec:mainconj} addresses 
questions of geometric $G$-conjugacy
versus rational $G$-conjugacy.
In particular, here we prove Theorem \ref{thm:Gvkdef}(i), (ii) (first assertion) and (iv) (see Corollary~\ref{cor:MCkdefinedstabilizer}).
We close the section with a discussion 
of when the preorder $\prec$ from Definition \ref{defn:preorder}
is a partial order.
It turns out that this is closely related to 
a descent result for geometric conjugacy, 
see Corollary \ref{cor:anti}. 
In Section \ref{sec:reduction}, we give an application of our results from Sections~\ref{sec:up_and_down} and \ref{sec:mainconj}: we show that Galois ascent
and certain conjugacy results
hold for general $G$ provided they hold for $\GL_n$.

In Section \ref{sec:Gcr}, we discuss applications to Serre's
notion of $G$-complete reducibility over $k$.
Under some mild rationality assumptions we obtain 
corresponding 
Galois and Levi ascent/descent results, see
Corollary \ref{cor:descent+ascentGCR}.

In Section \ref{sec:GLn}, we 
discuss the notion of cocharacter-closure 
in the classical context of conjugacy of 
endomorphisms under the general linear group in great detail.
Here the preorder from Definition \ref{defn:preorder}
is automatically antisymmetric (Corollary~\ref{cor:GL}).  We then consider modules over finitely generated $k$-algebras.  Our methods give a geometric way to explore the notion of degenerations of modules over a non-algebraically closed field.

We finish (Section~\ref{sec:ex}) with some 
examples of unipotent classes in $G_2$ 
and a representation of $\SL_2$,
demonstrating that the notions of Zariski-closure and
cocharacter-closure already differ for an algebraically 
closed field.

\section{Preliminaries}
\label{sec:prelims}

\subsection{Basic notation}
\label{ssec:basic}

Let $k$ be a field, let $\ovl k$
denote a fixed algebraic closure, and
let $k_s \subseteq \ovl k$ denote the separable closure
and $k_i \subseteq \ovl k$ the purely inseparable closure of $k$.
Note that $k_s=\ovl{k}$ if $k$ is perfect.
We denote the Galois group $\Gal(k_s/k)=\Gal(\ovl k/k)$ by $\Gamma$.
We use the notion of a $k$-scheme from \cite[AG.11]{borel}: a $k$-scheme is a
$\ovl k$-scheme together with a $k$-structure.
So $k$-schemes are assumed to be of finite type and reduced
separated $k$-schemes are called $k$-varieties.
Furthermore, a subscheme of a scheme $V$ over $k$ or over
$\ovl{k}$ is always a subscheme of $V$
as a scheme over $\ovl{k}$ and points of $V$ are always
closed points of $V$ as a
scheme over $\ovl{k}$.  By ``variety'' we mean ``variety over $\ovl{k}$''.

Now let $V$ be a $k$-variety.  If $k_1/k$ is an algebraic extension,
then we write $V(k_1)$ for the set of $k_1$-points of $V$.
If $W$ is a subvariety of $V$, then we set $W(k_1) = W(\ovl{k}) \cap V(k_1)$.
Here we do not assume that $W$ is $k$-defined, so $W(k_1)$
can be empty even when $k_1= k_s$.  The Galois group $\Gamma$
acts on $V(k_s)$; see, e.g., \cite[11.2]{springer}.
Recall the Galois criterion for a closed subvariety $W$ of $V$
to be $k$-defined: $W$ is $k$-defined if and only if
$W(\ovl{k})$ contains a
$\Gamma$-stable subset of $V(k_s)$ which is dense in $W$
(see \cite[Thm.~AG.14.4]{borel}).

\subsection{Algebraic groups}
\label{ssec:lags}

All linear algebraic groups are assumed to be smooth.  Let $H$ be a $k$-defined linear algebraic group.  By a subgroup of $H$ we mean a smooth subgroup.  We let $Z(H)$ denote the centre of $H$ and $H^0$ the connected component of
$H$ that contains $1$.  Recall that $H$ has a $k$-defined maximal torus \cite[18.2(i)~Thm.]{borel}.
For $K$ a subgroup of $H$, we denote the
centralizer of $K$ in $H$ by $C_H(K)$.

For the set of cocharacters (one-parameter subgroups) of $H$ we write $Y(H)$;
the elements of $Y(H)$ are the homomorphisms from the multiplicative group
$\ovl{k}^*$ to $H$. We denote the set of $k$-defined
cocharacters by $Y_k(H)$.
There is a left action of $H$ on $Y(H)$ given by
$(h\cdot \lambda)(a) = h\lambda(a)h^{-1}$ for
$\lambda\in Y(H)$, $h\in H$ and $a \in \ovl{k}^*$.
The subset $Y_k(H)$ is stabilized by $H(k)$.

The \emph{unipotent radical} of $H$ is denoted $R_u(H)$; it is the maximal
connected normal unipotent subgroup of $H$.
The algebraic group $H$ is called \emph{reductive} if $R_u(H) = \{1\}$.
Note that we allow a reductive group to be non-connected.

\subsection{Reductive groups}
\label{subsec:noncon}

Throughout the paper, $G$ denotes a $k$-defined
reductive algebraic group, possibly disconnected.
The crucial idea which allows us to deal with non-connected groups is the introduction of so-called
\emph{Richardson parabolic subgroups} (\emph{R-parabolic subgroups}) of a
reductive group $G$.
We briefly recall the main definitions and results;
for more details and further results,
the reader is referred to \cite[Sec.\ 6]{BMR} and \cite[Sec.~2.2]{GIT}.

\begin{defn}
\label{defn:rpars}
For each cocharacter $\lambda \in Y(G)$, let
$P_\lambda = \{ g\in G \mid \underset{a \to 0}{\lim}\,
\lambda(a) g \lambda(a)\inverse \textrm{ exists} \}$
(see Section~\ref{subsec:Gvars} for the definition of limit).
Recall that a subgroup $P$ of $G$ is \emph{parabolic}
if $G/P$ is a complete variety.
The subgroup $P_\lambda$ is parabolic in this sense, but the converse
is not true in general.
If we define
$L_\lambda = \{g \in G \mid \underset{a \to 0}{\lim}\,
\lambda(a) g \lambda(a)\inverse = g\}$,
then $P_\lambda = L_\lambda \ltimes R_u(P_\lambda)$,
and we also have
$R_u(P_\lambda) = \{g \in G \mid \underset{a \to 0}{\lim}\,
\lambda(a) g \lambda(a)\inverse = 1\}$.
The subgroups $P_\lambda$ for $\lambda \in Y(G)$
are called the \emph{Richardson parabolic} (or \emph{R-parabolic}) \emph{subgroups}
of $G$.
Given an R-parabolic subgroup $P$,
a \emph{Richardson Levi} (or \emph{R-Levi})
\emph{subgroup} of $P$ is any subgroup $L_\lambda$
such that $\lambda \in Y(G)$ and $P=P_\lambda$.
\end{defn}

If $G$ is connected, then the R-parabolic subgroups
(resp.\ R-Levi subgroups of R-parabolic subgroups)
of $G$ are exactly the parabolic subgroups
(resp.\ Levi subgroups of parabolic subgroups) of $G$;
indeed, most of the theory of parabolic subgroups and
Levi subgroups of connected reductive groups---including rationality properties---carries over to R-parabolic and R-Levi subgroups of
arbitrary reductive groups.
In particular, $R_u(P)(k)$ acts simply transitively on the set of $k$-defined R-Levi subgroups of a $k$-defined R-parabolic
subgroup $P$.  If $P,Q$ are R-parabolic subgroups of $G$ and $P^0= Q^0$,
then $R_u(P)= R_u(Q)$.  Given any maximal torus $T$ of an R-parabolic subgroup $P$, there is a unique R-Levi subgroup $L$ of $P$ such that $T\subseteq L$, and if $P$ is $k$-defined then $L$ is $k$-defined if and only if $T$ is.  If $\lambda$ is $k$-defined then $P_\lambda$ and $L_\lambda$ are $k$-defined.  Conversely, if $G$ is connected and $P$ is a $k$-defined R-parabolic subgroup of $G$ with a $k$-defined Levi subgroup $L$ then there exists $\lambda\in Y_k(G)$ such that $P= P_\lambda$ and $L= L_\lambda$.  (See \cite[Rem.~2.4]{GIT} for a counter-example with $G$ non-connected.)

If $H$ is a subgroup of $G$, then there is an obvious
inclusion $Y(H) \subseteq Y(G)$ of the sets of cocharacters.
When $H$ is reductive and $\lambda \in Y(H)$, there is then an R-parabolic
subgroup of $H$ associated to $\lambda$, as well as an R-parabolic
subgroup of $G$.
In order to distinguish between R-parabolic subgroups associated to
different subgroups of $G$,
we use the notation
$P_\lambda(H)$, $L_\lambda(H)$, etc., where necessary, but we write
$P_\lambda$ for $P_\lambda(G)$ and $L_\lambda$ for  $L_\lambda(G)$.
Note that $P_\lambda(H) = P_\lambda \cap H$,
$L_\lambda(H) = L_\lambda \cap H$
and $R_u(P_\lambda(H)) = R_u(P_\lambda) \cap H$.

More generally, for $H \subseteq G$ a closed subgroup 
which is not necessarily reductive and  
$\lambda \in Y(G)$ a cocharacter normalizing $H$, we define 
$P_\lambda(H) = H \cap P_\lambda$ and 
$R_u(P_\lambda(H)) = H \cap R_u(P_\lambda)$.
We recall a smoothness result from \cite[Prop.\ 2.1.8(3) and Rem.\ 2.1.11]{CGP}
which holds for these intersections.

\begin{prop}
\label{prop:smoothintersection}
Let $H \subseteq G$ be a closed subgroup, and let $\lambda \in Y(G)$ be
a cocharacter that normalizes $H$.
Then the scheme-theoretic intersections
$H \cap P_\lambda$, $H \cap R_u(P_\lambda)$ are smooth and coincide
with $P_\lambda(H)$, $R_u(P_\lambda(H))$ respectively.
\end{prop}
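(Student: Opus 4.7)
The proof plan is to invoke the general theory of dynamic subgroup schemes for $\mathbb{G}_m$-actions on smooth affine group schemes. The hypothesis that $\lambda$ normalizes $H$ says precisely that conjugation by $\lambda(\mathbb{G}_m)$ restricts to an algebraic $\mathbb{G}_m$-action on the closed subgroup $H$. Consequently, one can define candidate subgroup schemes $P_H(\lambda)$ and $U_H(\lambda)$ intrinsically inside $H$ as the attractor and strict attractor for this action, without reference to the ambient $G$. The task then splits into (a) identifying these intrinsic constructions with the scheme-theoretic intersections $H \cap P_\lambda$ and $H \cap R_u(P_\lambda)$, and (b) establishing their smoothness.

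For part (a), I would argue at the level of functors of points. An $R$-point of $H \cap P_\lambda$ is by definition an $h \in H(R)$ such that the morphism $\mathbb{G}_{m,R} \to G$ given by $a \mapsto \lambda(a) h \lambda(a)^{-1}$ extends to $\mathbb{A}^1_R$. Since this morphism factors through the closed immersion $H \hookrightarrow G$, the unique extension (if it exists) automatically lands in $H$, recovering precisely the intrinsic attractor $P_H(\lambda)$. The analogous verification with the extra condition that the extension sends $0$ to $1$ gives $H \cap R_u(P_\lambda) = U_H(\lambda)$.

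The main obstacle is part (b), the smoothness assertion. The strategy is to choose a $\mathbb{G}_m$-equivariant closed immersion of $H$ into a finite-dimensional representation $V$ of $\lambda(\mathbb{G}_m)$, with weight decomposition $V = \bigoplus_n V_n$. For the ambient linear action, the attractor $V^+ = \bigoplus_{n \geq 0} V_n$ and strict attractor $V^{++} = \bigoplus_{n > 0} V_n$ are manifestly smooth linear subspaces, and one checks that $P_H(\lambda) = H \cap V^+$ and $U_H(\lambda) = H \cap V^{++}$ scheme-theoretically. To propagate smoothness of $V^+$ down to $P_H(\lambda)$, I would compute tangent spaces: using the weight decomposition $\mathfrak{h} = \mathfrak{h}^+ \oplus \mathfrak{h}^0 \oplus \mathfrak{h}^-$ induced by $\lambda$, one shows that $\Lie(P_H(\lambda)) = \mathfrak{h}^{\geq 0}$. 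Combined with a Bia\l ynicki--Birula type decomposition of the smooth $\mathbb{G}_m$-scheme $H$, the resulting dimension count forces $P_H(\lambda)$ to be smooth of the expected dimension; the argument for $U_H(\lambda)$ is parallel, with the strict inequality on weights. This tangent-space/dimension analysis is the technical heart of the proof, and once smoothness is in hand the agreement with $H \cap P_\lambda$ and $H \cap R_u(P_\lambda)$ from step (a) completes the argument.
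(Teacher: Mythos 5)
The paper gives no proof here but cites \cite[Prop.\ 2.1.8(3), Rem.\ 2.1.11]{CGP}; your proposal is a sketch of precisely that source's dynamic method, with your part (a) matching CGP's compatibility of attractor subgroups with closed immersions and your part (b) matching their smoothness theorem for $P_H(\lambda)$, $U_H(\lambda)$, $Z_H(\lambda)$ over a smooth affine $\GG_m$-group $H$. Two small repairs are needed: under a $\GG_m$-equivariant linearization $H \hookrightarrow V$ the identity of $H$ lands in the weight-$0$ subspace $V_0$ but not necessarily at the origin, so you should translate by the ($\GG_m$-fixed) image of $e$ before asserting $U_H(\lambda) = H \cap V^{++}$; and for part (b), the theorem that the attractor of a smooth $\GG_m$-scheme is smooth (Bia{\l}ynicki--Birula, in the affine form due to Drinfeld) already gives what you need, so the tangent-space dimension count is redundant---if you want to avoid invoking that theorem, the genuinely missing ingredient is the open-cell statement that $U_H(-\lambda) \times P_H(\lambda) \to H$ is an open immersion (which is CGP 2.1.8(2)), as this is what supplies the dimension lower bound that your sketch does not otherwise produce.
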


\subsection{$G$-varieties and limits}
\label{subsec:Gvars}

Throughout the paper, $V$ denotes an affine $k$-defined $G$-variety.
This means that we assume both $V$ and the 
action of $G$ on $V$ are $k$-defined.
By a rational $G$-module, we mean a finite-dimensional vector space over $\ovl{k}$ with a linear $G$-action.  If both the $G$-module 
and the action are $k$-defined, then we say the 
rational $G$-module is $k$-defined.  For a subgroup $H$ of $G$, we denote the set of $H$-fixed points in $V$
by $V^H = \{v \in V \mid h\cdot v = v \textrm{ for all } h\in H\}$.

For $v \in V$, let $G_v$ denote the (set-theoretic) stabilizer of $v$,
and let $G_v^0 = (G_v)^0$ denote its identity component.
Let $G(k) \cdot v$ denote the orbit of $v$ under $G(k)$; we call this the {\em rational orbit}.  We write $G\cdot v$ for $G(\ovl{k})\cdot v$ and call this the {\em geometric orbit}.  We say that $G \cdot v$ is \emph{separable} 
provided that the orbit map $G \rightarrow G \cdot v$ is a separable
morphism. Equivalently, $G \cdot v$ is separable if and only if
the scheme-theoretic stabilizer $\mathcal{G}_v$ of $v$ in $G$ is smooth.

For each cocharacter $\lambda \in Y(G)$, we
define a morphism of varieties
$\phi_{v,\lambda}:\ovl{k}^* \to V$ via the formula
$\phi_{v,\lambda}(a) = \lambda(a)\cdot v$.
If this morphism extends to a morphism
$\widehat\phi_{v,\lambda}:\ovl{k} \to V$, then
we say that $\underset{a\to 0}{\lim}\, \lambda(a) \cdot v$ exists,
and set this limit
equal to $\widehat\phi_{v,\lambda}(0)$; note that such an extension,
if it exists, is necessarily unique.  
We sometimes call $v'$ the {\em limit of $v$ along $\lambda$}.  
If $X$ is a closed $G$-stable subset of $V$ and $v\in X$ 
then $\underset{a\to 0}{\lim}\, \lambda(a) \cdot v$ belongs to $X$.

For $\lambda \in Y_k(G)$ we say that
\emph{$\lambda$ destabilizes $v$ over $k$ (for $G$)}
provided
$\underset{a\to 0}{\lim}\, \lambda(a) \cdot v$ exists,
and if
$\underset{a\to 0}{\lim}\, \lambda(a) \cdot v$ exists
and does not belong to $G(k) \cdot v$, then we say
\emph{$\lambda$ properly destabilizes $v$ over $k$  (for $G$)}.

\begin{rem}
\label{rem:linear}
Sometimes
we want to reduce the case of a general ($k$-defined) affine $G$-variety $V$
to the case of a ($k$-defined)
rational $G$-module $V_0$.
Such a reduction is possible, thanks to \cite[Lem.~1.1(a)]{kempf},
for example: given $V$, there is a $k$-defined $G$-equivariant embedding of $V$ inside some $V_0$.
We set up some
standard notation which is in force throughout the paper.

Let $V$ be a rational $G$-module.
Given $\lambda \in Y(G)$ and $n\in\mathbb{Z}$, we define
\begin{align*}
\label{eq:defvlambda0}
V_{\lambda,n} &:=
\{v\in V\mid \lambda(a)\cdot v=a^nv\text{\ for all\ }a\in \ovl{k}^*\},\\
V_{\lambda, \ge0}&:=\sum_{n\ge0} V_{\lambda,n}\ ,
\quad
V_{\lambda, >0}:=\sum_{n>0} V_{\lambda,n}
\quad
\textrm{ and }
\quad
V_{\lambda, <0}:=\sum_{n<0} V_{\lambda,n}.\notag
\end{align*}
Then $V_{\lambda,\ge0}$ consists of the vectors
$v\in V$ such that $\underset{a \ra 0}{\lim} \lambda(a)\cdot v$ exists,
$V_{\lambda,>0}$ is the subset of vectors $v \in V$ such that
$\underset{a \ra 0}{\lim} \lambda(a)\cdot v = 0$,
and $V_{\lambda,0}$ is the subset of vectors $v \in V$ such that
$\underset{a \ra 0}{\lim} \lambda(a)\cdot v = v$.
Furthermore, the limit map
$v\mapsto\underset{a \ra 0}{\lim} \lambda(a)\cdot v$ is nothing
but the projection of $V_{\lambda,\ge0}$ with kernel $V_{\lambda, >0}$
and image $V_{\lambda,0}$.
If the $G$-module $V$ is $k$-defined,
then each $V_{\lambda,n}$ and $V_{\lambda,>0}$,
etc., is $k$-defined (cf.~\cite[II.5.2]{borel}).
\end{rem}

It is sometimes possible to pass from a geometric point in $V$
to an element of $V^n(k)$, using
the following technical lemma.

\begin{lem}
\label{lem:kpoint}
Suppose $V$ is a $k$-defined rational $G$-module.
Let $v \in V$ and let $k_1/k$ be a finite field extension such
that $v \in V(k_1)$.
Let $\alpha_1,\dots,\alpha_n \in k_1$ be a basis for $k_1$ over $k$.
Write $v = \sum_i \alpha_i v_i$ for certain (unique) $v_i \in V(k)$,
and set $\tuple{v} = (v_1,\dots,v_n) \in V^n(k)$.
Let $G$ act diagonally on $V^n$. Then the following assertions hold:
\begin{itemize}
\item[(i)] For $\lambda \in Y_k(G)$, the limit
$\lim_{a\to 0} \lambda(a) \cdot v$ exists if and only if
the limit $\lim_{a\to 0} \lambda(a) \cdot \tuple{v}$ exists.
\item[(ii)] Let $\lambda \in Y_k(G)$ and suppose that the limits
$v' = \lim_{a\to 0} \lambda(a) \cdot v$ and
$\tuple{v}' = \lim_{a\to 0} \lambda(a) \cdot \tuple{v}$ exist.
Then for any $g \in G(k)$, we have 
$v' = g\cdot v$ if and only if $\tuple{v}' = g \cdot \tuple{v}$.
\item[(iii)] We have
$G_{\tuple{v}} \subseteq G_v$ and $G_{\tuple{v}}(k) = G_v(k)$.
\end{itemize}

\end{lem}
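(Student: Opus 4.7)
The plan is to exploit a single underlying fact: since $\lambda$ is $k$-defined, the subspaces $V_{\lambda,n}$, $V_{\lambda,\ge 0}$, $V_{\lambda,<0}$, etc., from Remark~\ref{rem:linear} are all $k$-defined; and since the $\alpha_i$ form a $k$-basis of $k_1$, the expansion $V(k_1) = \bigoplus_i \alpha_i V(k)$ is unique. Combined, this forces: if $W \subseteq V$ is any $k$-defined subspace and $w = \sum_i \alpha_i w_i \in V(k_1)$ with $w_i \in V(k)$, then $w \in W(k_1)$ if and only if each $w_i \in W(k)$. (Pick a $k$-basis of $W$ and extend to one of $V$; read off coefficients in the $k$-basis $\{\alpha_i e_j\}$ of $V(k_1)$.)

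For (i), apply this observation with $W = V_{\lambda,\ge 0}$. Decomposing each $v_i = v_i^+ + v_i^-$ along the $k$-defined splitting $V = V_{\lambda,\ge 0} \oplus V_{\lambda,<0}$ gives $v = \sum_i \alpha_i v_i^+ + \sum_i \alpha_i v_i^-$, and uniqueness of the $\alpha_i$-expansion yields $v \in V_{\lambda,\ge 0}$ iff every $v_i^- = 0$ iff every $v_i \in V_{\lambda,\ge 0}$ iff $\tuple{v} \in (V^n)_{\lambda,\ge 0}$; by Remark~\ref{rem:linear} these are precisely the two limit-existence conditions. For (ii), recall from Remark~\ref{rem:linear} that the limit map is the $k$-linear projection onto $V_{\lambda,0}$, hence $v' = \sum_i \alpha_i v_i'$ with $v_i' := \lim_{a \to 0} \lambda(a) \cdot v_i \in V(k)$. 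For $g \in G(k)$, also $g \cdot v = \sum_i \alpha_i (g \cdot v_i)$ with $g \cdot v_i \in V(k)$. Both $v'$ and $g\cdot v$ are now expressed in their unique $\alpha_i$-expansions over $V(k)$, so $v' = g \cdot v$ if and only if $v_i' = g \cdot v_i$ for every $i$, i.e.\ $\tuple{v}' = g \cdot \tuple{v}$.

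For (iii), the inclusion $G_{\tuple{v}} \subseteq G_v$ is immediate from $v = \sum_i \alpha_i v_i$ and $G_{\tuple{v}} = \bigcap_i G_{v_i}$. Conversely, if $g \in G_v(k)$, then $\sum_i \alpha_i (g \cdot v_i) = g \cdot v = v = \sum_i \alpha_i v_i$ with both sides living in $V(k_1)$ and all coefficients in $V(k)$; uniqueness of the $\alpha_i$-expansion gives $g \cdot v_i = v_i$ for each $i$, so $g \in G_{\tuple{v}}(k)$. No step is a real obstacle; the only point to watch is that every reduction is a genuine \emph{uniqueness of coordinates} argument against a $k$-basis of $k_1$, applied to a $k$-defined subspace. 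In particular, no separability or perfectness hypothesis on $k_1/k$ enters, only that $\lambda \in Y_k(G)$ and $v_i \in V(k)$, which make the relevant weight and fixed-point subspaces $k$-defined.
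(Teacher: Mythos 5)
Your argument is correct. The paper's own ``proof'' is just a pointer---parts (i) and (ii) are attributed to \cite[Lem.~2.16]{GIT} and part (iii) is declared immediate from the definitions---so there is no in-text argument to compare against; your proof, resting on uniqueness of the $\alpha_i$-coordinate expansion $V(k_1)=\bigoplus_i \alpha_i V(k)$ against the $k$-defined subspaces $V_{\lambda,\ge 0}$, $V_{\lambda,0}$, $V_{\lambda,>0}$ (which are $k$-defined since $\lambda\in Y_k(G)$), together with the $k$-linearity of the limit projection and of the $G(k)$-action, is exactly the self-contained argument that the cited lemma encapsulates, and every step checks out.
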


\begin{proof}
Parts (i) and (ii) are
\cite[Lem.\ 2.16]{GIT}, while part (iii) follows directly from the definitions.
\end{proof}

The following result asserts that when considering fixed points one may
approximate $k$-split tori by $k$-defined cocharacters.

\begin{lem}
\label{lem:torustocochar}
Given a $k$-split torus $S$ of $G$,
there exists $\mu \in Y_k(S)$ such that $C_G(S) = L_\mu$ and
$V^S = V^{\IM(\mu)}$.
\end{lem}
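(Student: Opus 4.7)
The plan is to choose $\mu$ in sufficiently general position within the $k$-split cocharacter lattice $Y_k(S)$. Since $S$ is $k$-split, $Y_k(S) = Y(S)$ is a free $\ZZ$-module of rank $d := \dim S$, perfectly dual to $X^*(S)$. By Remark~\ref{rem:linear} I may embed $V$ in a $k$-defined rational $G$-module, so that $V$ admits an $S$-weight decomposition $V = \bigoplus_\chi V_\chi$ with only a finite set $\Xi \subseteq X^*(S) \setminus \{0\}$ of nonzero weights appearing. For each $\chi \in \Xi$, the kernel $\chi^\perp := \{\mu \in Y_k(S) : \langle \chi, \mu \rangle = 0\}$ is a proper sublattice of $Y_k(S) \cong \ZZ^d$, and any $\mu$ outside the finite union $\bigcup_{\chi \in \Xi} \chi^\perp$ satisfies
$V^{\IM(\mu)} = \bigoplus_{\langle \chi, \mu \rangle = 0} V_\chi = V_0 = V^S.$

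For the equality $L_\mu = C_G(S)$: automatically $L_\mu = C_G(\IM(\mu)) \supseteq C_G(S)$ since $\IM(\mu) \subseteq S$. The key fact is that as $\mu$ varies in $Y_k(S)$, the group $L_\mu$ takes only \emph{finitely many} distinct values: each is the centralizer of a subtorus of $S$, and these centralizers are controlled by discrete combinatorial data (which nonzero $S$-weights on $\gg = \Lie(G)$ vanish on the subtorus, together with component-group contributions in the possibly disconnected case). Since $S$ is $k$-split, $\overline{\bigcup_\mu \IM(\mu)} = S$, so $\bigcap_{\mu \in Y_k(S)} L_\mu = C_G(S)$; hence $C_G(S)$ is itself realized as $L_\mu$ for a generic $\mu$. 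For each excessive Levi $L \supsetneq C_G(S)$ occurring among these finitely many values, the condition $L_\mu \supseteq L$ forces $\IM(\mu) \subseteq S \cap Z(L) \subsetneq S$, carving out a further proper sublattice of $Y_k(S)$.

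I therefore impose finitely many conditions on $\mu$---avoiding each $\chi^\perp$ for $\chi \in \Xi$, and avoiding each of the finitely many proper sublattices associated to excessive Levis---each a proper sublattice of $Y_k(S) \cong \ZZ^d$. A finite union of proper sublattices is still proper, so a cocharacter $\mu$ in the complement exists and simultaneously realizes both $V^S = V^{\IM(\mu)}$ and $C_G(S) = L_\mu$. The main obstacle is establishing the finiteness of excessive Levis appearing among the $L_\mu$'s, especially in the disconnected setting; this reduces to the finiteness of the combinatorial types of centralizers of subtori of $S$, after which the bulk of the argument becomes a standard generic-lattice-point construction.
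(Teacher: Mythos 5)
Your opening paragraph, getting $V^S = V^{\IM(\mu)}$ by making $\mu$ avoid the finitely many hyperplanes $\chi^\perp$ attached to the nonzero $S$-weights of a linearization, is exactly the paper's first step. The paper then handles $C_G(S) = L_\mu$ by a device that sidesteps the finiteness issue you flag: it applies the same weight-avoidance argument to the affine $G$-variety $X := G \times V$, where $G$ acts by conjugation on the first factor and by the given action on the second. Since $L_\mu = C_G(\IM(\mu))$ is precisely the fixed-point set of $\IM(\mu)$ for the conjugation action, one has $X^S = C_G(S) \times V^S$ and $X^{\IM(\mu)} = L_\mu \times V^{\IM(\mu)}$, so a single $\mu$ that is generic for a linearization of $X$ produces both equalities at once, with no enumeration of Levis needed.

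You have correctly identified where the real work lies in your alternative route: the finiteness of $\{L_\mu : \mu \in Y_k(S)\}$. This is true but not automatic and is not supplied. One can reduce it to the statement that a fixed maximal torus $T \supseteq S$ of $G$ lies in only finitely many R-Levi subgroups, but for disconnected $G$ even this needs care, since $L_\mu$ is not determined by the root subsystem $\{\alpha : \langle \alpha, \mu\rangle = 0\}$ alone---one must also track which components of $G$ contribute to $C_G(\IM(\mu))$. There is also a small slip: from $L_\mu \supseteq L$ the correct conclusion is $\IM(\mu) \subseteq S \cap C_G(L)$ (your $Z(L)$ is what one gets from $L_\mu = L$); either version gives a proper sublattice, because $L \not\subseteq C_G(S)$ forces $S \not\subseteq C_G(L)$. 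But the cleanest repair is to fold the $C_G(S) = L_\mu$ condition into a fixed-point calculation as the paper does---or, equivalently, run your own first-paragraph argument once more for the conjugation action of $G$ on a $G$-equivariant affine embedding of $G$ itself, and then take $\mu$ in the intersection of the two generic loci.
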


\begin{proof}
Let $W$ be a $k$-defined finite-dimensional rational $G$-module.
Since $S$ is $k$-split, we can write $W$ as a direct sum of $S$-weight spaces
$W = W_0\oplus\cdots\oplus W_r$ for some $r$,
with the corresponding weights $\alpha_0,\ldots,\alpha_r$ of $S$,
where $\alpha_0$ is the trivial weight.
Given $i>0$, set
$C_i:=\{\sigma\in Y_k(S)\otimes\QQ \mid \langle\sigma,\alpha_i\rangle=0\}$
(where we extend the pairing in the obvious way).
Since $\alpha_i$ is non-trivial for each $i>0$, the subspace $C_i$
is proper for each $i$.
Hence the complement in $Y_k(S) \otimes \QQ$ of $C_1\cup...\cup C_r$
is non-empty.
Taking a point in this complement and multiplying by a suitably
large integer, we can find a cocharacter
$\mu \in Y_k(S)$ such that $\langle \mu,\alpha_i\rangle \neq 0$ for all $i>0$.
Then $W^S = W^{\IM(\mu)}$.

Now consider the affine $G$-variety $X:=G\times V$, where $G$ acts on the first factor by conjugation and on the second factor by
the given action of $G$ on $V$.
By embedding $X$ $G$-equivariantly in a rational $G$-module $W$ and applying the argument in the first paragraph, we can find
a cocharacter $\mu \in Y_k(S)$ such that $X^S = X^{\IM(\mu)}$.
But $X^S = C_G(S)\times V^S$ and $X^{\IM(\mu)} = L_\mu \times V^{\IM(\mu)}$, which gives the result.
\end{proof}

Next we recall a transitivity result for limits along commuting cocharacters.

\begin{lem}
\label{lem:limitoflimit}
Let $v \in V$.
Suppose $\lambda, \mu \in Y(G)$ are commuting cocharacters and
that $v':=\lim_{a\to 0} \lambda(a)\cdot v$ and $v'':=\lim_{a\to0}\mu(a)\cdot v'$ both exist.  Then $\lim_{a\to 0} (n\lambda + \mu)(a)\cdot v = v''$ for all sufficiently large $n$.
\end{lem}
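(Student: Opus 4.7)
The plan is to reduce to the case of a rational $G$-module and then decompose along the commuting torus generated by $\lambda$ and $\mu$, where both the hypothesis and the conclusion become a transparent statement about weights.

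First I would use the embedding trick from Remark \ref{rem:linear}: by \cite[Lem.~1.1(a)]{kempf} there is a $G$-equivariant embedding of $V$ into a rational $G$-module $W$. Existence of limits and the value of the limit are determined by looking at the image in $W$, so it suffices to prove the statement when $V$ itself is a rational $G$-module.

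Next, since $\lambda$ and $\mu$ commute, their images lie in a common torus $T \subseteq G$, and we may decompose
\[
V = \bigoplus_{(i,j) \in \ZZ^2} V_{i,j},
\]
where $V_{i,j} = \{w \in V \mid \lambda(a)\cdot w = a^i w,\ \mu(a)\cdot w = a^j w\text{ for all } a \in \ovl{k}^*\}$. Write $v = \sum v_{i,j}$ with $v_{i,j} \in V_{i,j}$; only finitely many summands are nonzero. The assumption that $\lim_{a \to 0}\lambda(a)\cdot v$ exists is equivalent to $v_{i,j} = 0$ for all $i<0$, and in that case $v' = \sum_j v_{0,j}$. The further assumption that $\lim_{a \to 0}\mu(a) \cdot v'$ exists then forces $v_{0,j} = 0$ for $j < 0$, and $v'' = v_{0,0}$.

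Now for any $n$, the cocharacter $n\lambda + \mu$ acts on $V_{i,j}$ by the character $a \mapsto a^{ni+j}$, so
\[
(n\lambda+\mu)(a)\cdot v = \sum_{i,j} a^{ni+j} v_{i,j}.
\]
Let $J = \max\{|j| : v_{i,j} \neq 0\}$, a finite quantity. For $n > J$, the only pairs $(i,j)$ with $v_{i,j} \neq 0$ and $ni + j \leq 0$ satisfy $i = 0$ and $j \geq 0$; in particular the limit $\lim_{a\to 0}(n\lambda + \mu)(a)\cdot v$ exists, and the only pair with $ni + j = 0$ is $(0,0)$. Hence for all sufficiently large $n$ the limit equals $v_{0,0} = v''$, which is the required identity. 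No step presents a real obstacle; the key point is simply to observe that the hypotheses are each equivalent to vanishing conditions on the bi-weight spaces, which then immediately control the behaviour of $n\lambda + \mu$ for large $n$.
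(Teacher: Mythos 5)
Your proof is correct. The paper's own proof is just the reduction to the case of a rational $G$-module followed by a citation of \cite[Lem.~2.15]{GIT}; your argument makes the same reduction and then supplies the bi-weight-space decomposition that the cited lemma is based on, so it is essentially the same approach with the details filled in.
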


\begin{proof}
It suffices to prove this when $V$ is a vector space, and then the result is contained in \cite[Lem.\ 2.15]{GIT}.
\end{proof}

The next result is similar to Lemma \ref{lem:limitoflimit},
but the assumption is that the limits of $v$ exist along both $\lambda$
and $\mu$.

\begin{lem}
\label{lem:twoways}
Let $v \in V$.
Suppose $\lambda, \mu \in Y(G)$ are commuting cocharacters and
suppose $v \in V$ is such that
$v':=\lim_{a\to 0} \lambda(a)\cdot v$ and $v'':=\lim_{a\to0}\mu(a)\cdot v$ both exist.  Let $m,n\in \NN$.
Then $\lim_{a\to 0} \lambda(a)\cdot v''$, $\lim_{a\to 0} \mu(a)\cdot v'$ and $\lim_{a\to 0} (m\lambda+ n\mu)(a)\cdot v$ all exist and these limits are equal.
\end{lem}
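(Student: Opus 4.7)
The plan is to mimic the proof of Lemma~\ref{lem:limitoflimit}: reduce to the case where $V$ is a rational $G$-module (via a $G$-equivariant closed embedding, as in \cite[Lem.~1.1(a)]{kempf}; this reduction preserves existence and values of limits because the complement of a closed $G$-stable set is open), and then diagonalize. Since $\lambda$ and $\mu$ commute, the subgroup they generate is commutative and consists of semisimple elements, hence lies in some torus $T \subseteq G$. Decompose $V = \bigoplus_{\chi \in X(T)} V_\chi$ into $T$-weight spaces and write $v = \sum_\chi v_\chi$ with $v_\chi \in V_\chi$.

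For any $\sigma \in Y(T)$ one has $\sigma(a) \cdot v_\chi = a^{\langle \chi,\sigma\rangle} v_\chi$, so $\lim_{a\to 0} \sigma(a) \cdot v$ exists if and only if $v_\chi = 0$ whenever $\langle \chi,\sigma\rangle < 0$, and in that case the limit equals $\sum_{\langle \chi,\sigma\rangle = 0} v_\chi$, i.e.\ the projection onto the weight-$0$ part for $\sigma$. Applying this to both $\lambda$ and $\mu$, the joint hypothesis that $v'$ and $v''$ both exist translates into: $v_\chi = 0$ unless $\langle \chi,\lambda\rangle \ge 0$ \emph{and} $\langle \chi,\mu\rangle \ge 0$.

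The conclusion is then a bookkeeping exercise. The limit $\lim_{a\to 0}\mu(a)\cdot v'$ exists because every nonzero $v_\chi$ appearing in $v'$ satisfies $\langle \chi,\mu\rangle \ge 0$, and it equals $\sum v_\chi$ summed over those $\chi$ with $\langle \chi,\lambda\rangle = \langle \chi,\mu\rangle = 0$. By symmetry the same holds for $\lim_{a\to 0}\lambda(a)\cdot v''$. For the combined cocharacter one has $\langle \chi, m\lambda + n\mu\rangle = m\langle \chi,\lambda\rangle + n\langle \chi,\mu\rangle$; with $m,n$ positive this pairing is non-negative on all weights occurring in $v$ and vanishes precisely when both $\langle \chi,\lambda\rangle$ and $\langle \chi,\mu\rangle$ vanish, so $\lim_{a\to 0}(m\lambda + n\mu)(a)\cdot v$ exists and equals the same sum.

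There is no real technical obstacle once the weight-space decomposition is in place; the only points that warrant mention are the reduction to a rational $G$-module and the elementary fact that two commuting cocharacters share a common torus. Both of these are standard and were already used in Lemma~\ref{lem:limitoflimit}.
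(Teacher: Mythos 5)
Your proof is correct and is essentially the same argument as the paper's: reduce to a rational $G$-module, observe that the limit along a cocharacter is a projection onto a weight-zero subspace, and note that for commuting cocharacters these projections commute and compose to projection onto the joint zero-weight space. The only cosmetic difference is that you work with the full weight-space decomposition for a common torus $T \supseteq \operatorname{Im}(\lambda) \cup \operatorname{Im}(\mu)$ and track pairings $\langle\chi,\sigma\rangle$, whereas the paper phrases the same thing via the subspaces $V_{\lambda,\ge 0}$, $V_{\mu,\ge 0}$ and the induced projections, noting that $V_{\lambda,\ge 0}$ is $\mu$-stable because $\lambda$ and $\mu$ commute; the latter phrasing avoids invoking the existence of a common torus.
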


\begin{proof}
It suffices to prove this when $V$ is a vector space.
Then, using the notation from Remark \ref{rem:linear},
we have $v \in V_{\lambda,\geq0}$,
the set of all points for which $\lambda$ acts with a non-negative weight.
This set is closed in $V$ and, since $\lambda$ and $\mu$ commute, it is $\mu$-stable.
Therefore $v'':=\lim_{a\to0}\mu(a)\cdot v\in V_{\lambda,\geq0}$, and hence the limit along $\lambda$ for $v''$ exists.
Similarly, $v' \in V_{\mu,\geq0}$, and hence the limit along $\mu$ for $v'$ exists.  Clearly $v\in U:=V_{\lambda,\geq0}\cap V_{\mu,\geq0}$, so the limit along $m\mu+ n\mu$ for $v$ also exists.

Restricting attention to the subspace $U$, taking the limit along $\lambda$ is the same as projecting onto $U_{\lambda,0} := U \cap V_{\lambda,0}$,
and taking the limit along $\mu$ is the same as projecting onto $U_{\mu,0}:=U\cap V_{\mu,0}$.
The combination of taking the limit along $\lambda$ and then along $\mu$ is the same as taking the limit along $\mu$ and then along $\lambda$,
and the result is simply the projection of $U$ onto $U_{\lambda,0}\cap U_{\mu,0}$.  It is clear that the same is true for taking the limit along $m\mu+ n\mu$ for $v$.
\end{proof}

\begin{lem}
\label{lem:max_tor_ascent}
Let $v \in V$ and 
let $S$ be a maximal $k$-defined torus of $G_v$.
Suppose $\lambda\in Y_k(C_G(S))$ destabilizes $v$
and does not fix $v$.  Then $\lambda$ properly
destabilizes $v$ over $k$ for $G$.
\end{lem}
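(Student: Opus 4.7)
The plan is to argue by contradiction. Assume that $v':=\lim_{a\to 0}\lambda(a)\cdot v$ lies in $G(k)\cdot v$, and write $v'=g\cdot v$ for some $g\in G(k)$. Since $\lambda\in Y_k(C_G(S))$ commutes with $S$, the limit $v'$ is fixed by $S$; moreover, after embedding $V$ into a rational $G$-module as in Remark~\ref{rem:linear}, $v'\in V_{\lambda,0}$, so $\lambda$ itself fixes $v'$. From $S \subseteq G_{v'} = gG_vg^{-1}$ one deduces $g^{-1}Sg \subseteq G_v$, a second $k$-defined torus of $G_v$ of the same dimension as $S$.

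The key move is to form the conjugate cocharacter $\lambda':=g^{-1}\cdot\lambda \in Y_k(G)$, which is $k$-defined since $Y_k(G)$ is stable under the action of $G(k)$. Using $g\cdot v = v'$ together with $\lambda(a)\cdot v'=v'$, a direct computation shows
\[
\lambda'(a)\cdot v \;=\; g^{-1}\lambda(a)g\cdot v \;=\; g^{-1}\lambda(a)\cdot v' \;=\; g^{-1}\cdot v' \;=\; v
\]
for every $a\in\ovl{k}^*$. Hence $\lambda'\in Y_k(G_v)$. Furthermore, $\IM(\lambda')$ commutes with $g^{-1}Sg$, being the $g$-conjugate of the commuting pair $(\IM(\lambda),S)$.

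It remains to exploit the maximality of $S$. Applying \cite[18.2(i)]{borel} to the smooth $k$-defined subgroup $C_{G_v}(S)^0$ produces a $k$-defined maximal torus $T$ of $C_{G_v}(S)^0$; since $S$ is central in $C_{G_v}(S)^0$ we have $T\supseteq S$, and then the maximality of $S$ among $k$-defined tori of $G_v$ forces $T=S$. This shows $\dim S$ equals the rank of $G_v$, so every $k$-defined torus of $G_v$ (in particular $g^{-1}Sg$) of this same dimension is already maximal among the $k$-defined tori of $G_v$. Thus $\IM(\lambda')\cdot g^{-1}Sg$ is a $k$-defined torus of $G_v$ containing the maximal torus $g^{-1}Sg$, so the product equals $g^{-1}Sg$ itself, yielding $\IM(\lambda')\subseteq g^{-1}Sg$. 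Conjugating back by $g$ gives $\IM(\lambda)\subseteq S \subseteq G_v$, so $\lambda$ fixes $v$---contradicting the hypothesis.

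The main delicate step is verifying that $g^{-1}Sg$ is still a maximal $k$-defined torus of $G_v$; once the dimension-equals-rank fact for maximal $k$-defined tori of the smooth group $G_v$ is established, the contradiction drops out of the short calculation with $\lambda'=g^{-1}\cdot\lambda$ above, which crucially uses both that $\lambda$ fixes the limit $v'$ and that $\lambda$ commutes with $S$.
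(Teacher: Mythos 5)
The overall shape of your argument is sound and closely mirrors the paper's: both proofs observe that $\lambda$ and $S$ generate, inside the stabilizer, a $k$-defined torus strictly larger than $S$ and derive a contradiction with maximality. (The paper works with $S' = S\cdot\IM(\lambda)$ inside $G_{v'}$; you transport the problem back into $G_v$ via $\lambda' = g^{-1}\cdot\lambda$, which is the same idea in different clothing.) Your computations in the first three steps are correct.

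The gap is in the final step, where you claim $C_{G_v}(S)^0$ is $k$-defined and apply \cite[18.2(i)]{borel} to it to deduce $\dim S = \rk G_v$. Neither assertion is justified. The hypothesis of the lemma is that $S$ is a maximal $k$-defined torus of $G_v$; it is \emph{not} assumed that $G_v$ itself is $k$-defined, and indeed one of the main points of this part of the paper is to handle exactly the situation where $G_v$ fails to be $k$-defined. Since $C_{G_v}(S) = G_v \cap C_G(S)$ involves the possibly non-$k$-defined $G_v$, you cannot invoke \cite[18.2(i)]{borel}. Worse, the conclusion $\dim S = \rk G_v$ is genuinely false under the stated hypotheses: ``maximal $k$-defined torus'' means maximal among $k$-defined tori, which can have strictly smaller dimension than $\rk G_v$ (for instance $G_v$ could be a positive-dimensional torus with no nontrivial $k$-defined subtorus, making $S=\{1\}$ maximal). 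What you actually need---and what the paper is implicitly using---is that any two maximal $k$-defined tori of the subgroup $G_v$ have the same dimension, so that $g^{-1}Sg$ (or in the paper's version, $gSg^{-1}$ inside $G_{v'}$) is still maximal. That fact does hold, but the correct route is via the $k$-defined group $\widetilde{G_v}$ constructed in the proof of Lemma~\ref{lem:k-rank}: every $k$-defined torus of $G_v$ lies in $\widetilde{G_v}$, so maximal $k$-defined tori of $G_v$ are exactly maximal $k$-defined tori of $\widetilde{G_v}$, and those all have dimension $\rk \widetilde{G_v}$. Replacing your appeal to $C_{G_v}(S)^0$ with this argument (or simply citing the uniformity of dimension for maximal $k$-defined tori of a subgroup) closes the gap, and the rest of your proof then goes through.
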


\begin{proof}
Let $v':= \lim_{a\to 0} \lambda(a)\cdot v$.
Then $S$ and $\IM(\lambda)$ generate
a $k$-defined torus $S'$ of $G_{v'}$.
Now $\IM(\lambda)$ is not contained
in $S$ because $\lambda$ does not fix $v$, so $\dim S'> \dim S$.
But $S$ is a maximal $k$-defined torus of $G_v$, so $v$ and $v'$
cannot be $G(k)$-conjugate.
\end{proof}

\subsection{Results about $R_u(P_\lambda)$-conjugacy}

First we recall two of the main results from \cite{GIT}:

\begin{thm}[{\cite[Thm.\ 3.1]{GIT}}]
\label{thm:rupgalois}
 Let $v\in V$ such that $G_v(k_s)$ 
is $\Gamma$-stable and let $\lambda \in Y_k(G)$ such that
$v':=\lim_{a\to0} \lambda(a)\cdot v$ 
exists and is $R_u(P_\lambda)(k_s)$-conjugate to $v$.
Then $v'$ is $R_u(P_\lambda)(k)$-conjugate to $v$.
\end{thm}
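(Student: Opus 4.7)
The plan is a Galois descent argument leveraging the structure of $R := R_u(P_\lambda)$ as a $k$-split unipotent group (which it is, since $\lambda \in Y_k(G)$). First, by Remark~\ref{rem:linear}, I would reduce to the case where $V$ is a $k$-defined rational $G$-module. Applying Lemma~\ref{lem:kpoint} to a $k$-basis of the field of definition of $v$ produces a tuple $\tuple{v} \in V^n(k)$ with the property that $R(k)$-conjugacy of $\tuple{v}$ and $\lim_{a \to 0}\lambda(a)\cdot\tuple{v}$ implies $R(k)$-conjugacy of $v$ and $v'$; thus I may assume $v \in V(k)$, whence $v' \in V(k)$ as well.

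Fix $r \in R(k_s)$ with $r \cdot v = v'$. For every $\gamma \in \Gamma$, the equation $\gamma(r) \cdot v = v'$ (which uses $v, v' \in V(k)$) forces $r^{-1}\gamma(r) \in G_v(k_s) \cap R(k_s)$. By the hypothesis that $G_v(k_s)$ is $\Gamma$-stable, the set $H := G_v(k_s) \cap R(k_s)$ is $\Gamma$-stable, so $c(\gamma) := r^{-1}\gamma(r)$ is a $1$-cocycle from $\Gamma$ to $H$. Producing $r_0 \in R(k)$ with $r_0 \cdot v = v'$ is equivalent to trivializing $c$ inside $H$: given $h \in H$ with $c(\gamma) = h^{-1}\gamma(h)$, the element $r_0 := rh^{-1}$ is $\Gamma$-fixed and satisfies $r_0 \cdot v = v'$.

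To trivialize $c$, I would exploit the $\lambda$-weight filtration $R = R^{(1)} \supseteq R^{(2)} \supseteq \cdots \supseteq R^{(m+1)} = 1$, where $R^{(i)}$ is the $k$-defined normal subgroup of $R$ on which $\lambda$ acts with weight $\geq i$; each quotient $R^{(i)}/R^{(i+1)}$ is a $k$-split abelian vector group (abelian because $[R^{(i)}, R^{(j)}] \subseteq R^{(i+j)}$). Passing to the top quotient, the induced cocycle takes values in a $k$-split $\mathbb{G}_a^{d}$, where $H^1(k, -) = 0$ by the additive Hilbert~90 theorem; this kills the highest weight component of $c$ after a suitable adjustment of $r$ by a lift into $R(k)$. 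Iterating down the filtration eventually trivializes $c$ and yields the required $r_0$.

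The principal obstacle is that $H$ is not known to be the set of $k_s$-points of a $k$-defined subgroup scheme (the stabilizer $G_v \cap R$ need not be smooth or $k$-defined), so one cannot invoke non-abelian Galois cohomology of $H$ directly. The weight filtration detours around this by reducing everything to vanishing of $H^1$ in $k$-split abelian vector groups; the most delicate point is ensuring that the correcting element at each stage can be chosen inside $H$ rather than merely inside $R(k_s)$, so that the induction keeps the cocycle valued in $H$ intersected with the next filtration stratum. This is exactly where the $\Gamma$-stability of $G_v(k_s)$ is essential.
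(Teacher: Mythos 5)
Your general strategy --- reduce to a rational $G$-module, set up the cocycle $c(\gamma) = r^{-1}\gamma(r)$ valued in $G_v(k_s)\cap R(k_s)$, and descend along the $\lambda$-weight filtration of $R = R_u(P_\lambda)$ using additive Hilbert~90 --- is the right circle of ideas, but there are two concrete problems with the argument as written.

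First, the reduction to $v \in V(k)$ via Lemma~\ref{lem:kpoint} does not go through. Lemma~\ref{lem:kpoint}(ii) transfers $G(k)$-conjugacy between $v$ and $\tuple{v}$, which is exactly what you need at the \emph{end} of the argument; but you also need the \emph{hypothesis} to transfer, namely that $R(k_s)$-conjugacy of $v$ and $v'$ forces $R(k_s)$-conjugacy of $\tuple{v}$ and $\tuple{v}'$. That fails: if $v = \sum_i \alpha_i v_i$ with $\alpha_i$ a $k$-basis of $k_1$ and $r \in R(k_s)$ satisfies $r\cdot v = v'$, then $\sum_i \alpha_i(r\cdot v_i - v_i') = 0$ with $r\cdot v_i - v_i' \in V(k_s)$, and you cannot conclude $r\cdot v_i = v_i'$ because the $\alpha_i$ are not $k_s$-linearly independent (indeed $\alpha_i \in k_s$ when $k_1/k$ is separable). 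There is also a structural red flag: for $v\in V(k)$ the hypothesis that $G_v(k_s)$ is $\Gamma$-stable is automatic, so if the reduction worked the theorem would hold with no hypothesis on $G_v$ at all, which is not the case. In fact, the hypothesis is used precisely to obtain the cocycle relation $\gamma(r)\cdot v = v'$ \emph{without} assuming $v\in V(k)$: since $r^{-1}\cdot\lambda \in Y_{k_s}(G_v)$, its $k_s$-points lie in $G_v(k_s)$, hence so do those of $\gamma(r^{-1}\cdot\lambda) = \gamma(r)^{-1}\cdot\lambda$ by $\Gamma$-stability, and then (cf.\ the appeal to \cite[Lem.~2.12]{GIT} in Proposition~\ref{prop:ruplevi}) $\gamma(r)\cdot v = \lim_{a\to 0}\lambda(a)\cdot v = v'$. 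You should make this move instead of the $k$-point reduction.

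Second, the step that you correctly flag as delicate --- keeping the correcting element inside $H = G_v(k_s)\cap R(k_s)$ at each filtration stage --- is not actually resolved. Vanishing of $H^1$ for the vector groups $R^{(i)}/R^{(i+1)}$ gives a bounding element in $R(k_s)/R^{(i+1)}(k_s)$, but there is no reason \emph{a priori} for that element (or a lift of it) to lie in $H$; simply observing that $H$ is $\Gamma$-stable does not supply it, and adjusting $r$ by a lift outside $G_v$ destroys the relation $r\cdot v = v'$. The cocycle $c$ must be trivialized using an element of $H$, not merely of $R(k_s)$, and showing that the image of $H$ in each graded piece behaves well enough (e.g., is the $k_s$-points of a $k$-subspace, or that the cocycle bounds in the relevant Zariski closure) is the actual content of the theorem, especially when $k$ is imperfect and $\overline{H}$ could in principle be $k$-wound. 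As written, the proposal identifies the crux but then asserts rather than argues; the argument in \cite[Thm.~3.1]{GIT} has to do real work at exactly this point.
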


\begin{thm}[{\cite[Thm.\ 3.3]{GIT}}] 
\label{thm:Ruconj}
Let $v\in V$ and suppose $k$ is perfect.
Let $\lambda\in Y_k(G)$ such
that $v':=\underset{a\to 0}{\lim} \lambda(a)\cdot v$
exists and is $G(k)$-conjugate to $v$.
Then $v'$ is $R_u(P_\lambda)(k)$-conjugate to $v$.
\end{thm}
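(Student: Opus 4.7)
The plan is to reduce to the case $v\in V(k)$ and then apply Theorem \ref{thm:rupgalois} after establishing the geometric version of the statement over $\ovl k=k_s$. For the reduction, I would use \cite[Lem.~1.1(a)]{kempf} to assume $V$ is a $k$-defined rational $G$-module, and then choose a finite extension $k_1/k$ with $v\in V(k_1)$ (which exists since $V$ is of finite type, and is automatically separable because $k$ is perfect). Applying Lemma \ref{lem:kpoint} with this $k_1$ and a $k$-basis $\alpha_1,\ldots,\alpha_n$ of $k_1$, we obtain $\tuple v=(v_1,\ldots,v_n)\in V^n(k)$ for which parts (i) and (ii) of that lemma translate the hypotheses on $v$ (existence of $v'$, $G(k)$-conjugacy $v'=g\cdot v$) into the same hypotheses for $\tuple v$, while any $R_u(P_\lambda)(k)$-conjugacy between $\tuple v$ and $\tuple v'$ transfers back to the desired conjugacy between $v$ and $v'$. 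So we may assume $v\in V(k)$.

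Once $v$ is $k$-rational, $G_v(k_s)$ is automatically $\Gamma$-stable: for $\gamma\in\Gamma$ and $h\in G_v(k_s)$, $\gamma(h)\cdot v=\gamma(h\cdot v)=\gamma(v)=v$. The hypothesis on the stabilizer in Theorem \ref{thm:rupgalois} is then satisfied, so it suffices to prove the geometric assertion: if $v\in V$ and $v':=\lim_{a\to 0}\lambda(a)\cdot v$ lies in $G\cdot v$, then $v'\in R_u(P_\lambda)(\ovl k)\cdot v$. This is the main obstacle. My approach would be to pick any $h\in G$ with $h\cdot v=v'$ and show that, after modifying $h$ on the right by an element of $G_v$, we may assume $h\in P_\lambda$. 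Granting this, the Levi decomposition $h=\ell u$ with $\ell\in L_\lambda$ and $u\in R_u(P_\lambda)$ forces $\ell\in G_{v'}$: since $\ell$ commutes with $\lambda$,
\[
v'=\lim_{a\to 0}\lambda(a)\cdot(h\cdot v)=\lim_{a\to 0}\bigl(\ell\lambda(a)u\lambda(a)^{-1}\bigr)\cdot\bigl(\lambda(a)\cdot v\bigr)=\ell\cdot v',
\]
so $u\cdot v=\ell^{-1}\cdot v'=v'$, as required. The real work is the modification of $h$ into $P_\lambda$, which can be handled via a Bruhat-type decomposition of $G$ relative to $\lambda$, using that $h\cdot v=v'$ lies in the attracting set $V_{\lambda,\ge 0}$ to rule out the non-$P_\lambda$ double cosets.

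Once this geometric step is in hand, Theorem \ref{thm:rupgalois} yields the required $R_u(P_\lambda)(k)$-conjugacy of $v$ and $v'$ in the reduced $k$-rational setting, and Lemma \ref{lem:kpoint}(ii) transports this conclusion back to the original (possibly non-$k$-rational) $v$ and $v'$, completing the proof.
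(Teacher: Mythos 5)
The paper does not prove this theorem at all; it is quoted verbatim from \cite[Thm.~3.3]{GIT}, so there is no internal proof to compare against. Evaluating your proposal on its own terms, the reduction to $v\in V(k)$ via Lemma~\ref{lem:kpoint} is fine (and the observation that $k$ perfect makes any finite extension separable, so $k_s=\ovl{k}$, is exactly what makes the eventual appeal to Theorem~\ref{thm:rupgalois} legitimate). The problem is the ``geometric step,'' which is where the entire content of the theorem lives, and it is not actually proved.

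Specifically, you reduce the geometric assertion $v'\in R_u(P_\lambda)(\ovl{k})\cdot v$ to the claim that some $h\in G$ with $h\cdot v=v'$ can be replaced (modulo $G_v$) by an element of $P_\lambda$. But your own Levi-decomposition computation shows that these two statements are \emph{equivalent}: if $h=\ell u\in P_\lambda$ sends $v$ to $v'$ then $u\cdot v=v'$, and conversely any $u\in R_u(P_\lambda)$ with $u\cdot v=v'$ already lies in $P_\lambda$. So the ``modification of $h$ into $P_\lambda$'' is not a reduction of the problem, it is a reformulation of it, and the entire burden falls on the sentence asserting that a ``Bruhat-type decomposition\dots rules out the non-$P_\lambda$ double cosets.'' That assertion is not substantiated, and it is not clear it can be: writing $h\in P_\lambda w P_\lambda$ and using that $v,v'\in V_{\lambda,\ge0}$ only tells you that $w$ carries one point of $V_{\lambda,\ge 0}$ to another, which does not force $w$ to be trivial. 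The genuine difficulty here --- that a destabilising cocharacter which stays inside a single $G$-orbit must already act through $R_u(P_\lambda)$ --- is the theorem of Kraft--Kuttler (char.~$0$, homogeneous spaces) and its generalisation in \cite{GIT}, and requires a real argument (in \cite{GIT} it rests on an analysis of the limit morphism and uses the machinery developed around \cite[Lem.~2.12--2.15]{GIT}); it cannot be waved away by invoking Bruhat cells. Until this step is actually carried out, the proof is incomplete.
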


Theorem \ref{thm:Ruconj} was first proved by H.~Kraft and J.~Kuttler
for $k$ algebraically closed of characteristic zero
in case $V = G/H$ is an affine homogeneous space, cf.\
\cite[Prop.\ 2.4]{schmitt} or \cite[Prop.\ 2.1.2]{Gomez}.

The following result is an analogue of Theorem \ref{thm:rupgalois} involving descent to Levi subgroups; the proof involves very similar ideas.

\begin{prop}
\label{prop:ruplevi}
Let $v\in V$.
Suppose $S$ is a $k$-defined torus of $G_v$, $L=C_G(S)$ and $\lambda \in Y_k(L)$ is such that
$v':=\lim_{a\to0} \lambda(a)\cdot v$ exists and is $R_u(P_\lambda)(k)$-conjugate to $v$.
Then $v'$ is $R_u(P_\lambda(L))(k)$-conjugate to $v$.
\end{prop}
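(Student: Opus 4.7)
The plan is to mirror the argument of Theorem~\ref{thm:rupgalois}, with the conjugation action of the torus $S$ playing the role that the Galois group $\Gamma$ plays there.

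I would begin by setting $X := \{g \in R_u(P_\lambda) : g\cdot v = v'\}$, a right torsor under $H := R_u(P_\lambda) \cap G_v$ that contains the given element $u \in R_u(P_\lambda)(k)$. Note that $S \subseteq L_\lambda$ normalises $R_u(P_\lambda)$, and $S$ fixes both $v$ (being contained in $G_v$) and $v' = \lim_{a\to 0}\lambda(a)\cdot v$ (since $S$ commutes with $\lambda \in Y(L) = Y(C_G(S))$, so the $S$-fixity of $v$ passes to the limit). Hence conjugation by $S$ preserves $X$. Any element of $R_u(P_\lambda)$ that centralises $S$ lies in $L$, so
\[
X^S = X \cap L = X \cap R_u(P_\lambda(L)),
\]
and it suffices to produce a $k$-rational point of $X^S$.

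The heart of the argument is the abelian case. Suppose $U := R_u(P_\lambda)$ is commutative. Then the conjugation action of the $k$-defined torus $S$ on $U$ is linear, and one obtains a $k$-defined decomposition $U = U^S \oplus U'$, where $U'$ is the sum of the nontrivial $S$-weight subspaces over $k_s$, assembled into a $\Gamma$-stable (hence $k$-defined) complement. Writing $u = u_0 \cdot u'$ with $u_0 \in U^S(k) = R_u(P_\lambda(L))(k)$ and $u' \in U'(k)$, the $S$-invariance of $X$ combined with commutativity of $U$ yields $(s u' s^{-1})\cdot v = u'\cdot v$ for every $s \in S$. Decomposing $u' = \sum_{\chi \ne 1} u'_\chi$ into $S$-weight components over $k_s$ and invoking linear independence of characters of $S$ forces each $u'_\chi$ to fix $v$; hence $u' \in G_v$ and so $u_0 \cdot v = u\cdot v = v'$, producing the desired $k$-rational $S$-fixed element.

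For the general case I would induct on the nilpotency class of $U$. Choose a characteristic---hence $S$-stable and $k$-defined---filtration of $U$ by normal subgroups with abelian successive quotients, for example the derived or the lower central series, and apply the abelian case at each stage to modify $u$ by an element of the current filtration term so as to enforce $S$-fixity modulo the next term. The main obstacle is precisely this lifting step in the non-abelian induction: one must propagate both $S$-invariance and $k$-rationality through the short exact sequences of the filtration. The enabling fact, which parallels the Galois-cohomological input in the proof of Theorem~\ref{thm:rupgalois}, is the vanishing of $H^1$ for a $k$-torus acting on a smooth unipotent $k$-group, together with the smoothness of the resulting $S$-fixed subgroups.
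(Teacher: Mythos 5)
Your overall strategy is a genuine alternative to the paper's proof, and your abelian case is correct: the decomposition $U = U^S \oplus U'$ over $k$, the reduction to showing that $u'$ fixes $v$, and the linear-independence-of-characters argument that forces $u' \in G_v$ all hold up. The identification $X^S = X \cap R_u(P_\lambda(L))$ and the observation that $S$ fixes both $v$ and $v'$ are also right. However, there is a genuine gap in the non-abelian step. You reduce to trivialising a $k$-defined algebraic $1$-cocycle $S \to H$, where $H = R_u(P_\lambda) \cap G_v$, but the needed vanishing of $H^1$ is asserted without proof or reference, and it is not available off the shelf in the form you need: $H$ is only the set-theoretic stabiliser intersection, which need not be $k$-defined (indeed $G_v$ itself is generally only $k_i$-defined), need not be connected, and $S$ need not be $k$-split. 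Propagating $k$-rationality through a central filtration of $R_u(P_\lambda)$ while simultaneously cutting down into $H$ is precisely the difficulty, and "apply the abelian case at each stage" does not literally make sense because the subquotients of the filtration do not act on $V$. Filling this in looks as hard as the original statement.

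The paper sidesteps cohomology entirely. Its proof observes that $u^{-1}\cdot\lambda$ is a $k$-defined cocharacter of $G_v$, forms the $k$-defined subgroup $H = \langle \IM(u^{-1}\cdot\lambda), S\rangle \subseteq G_v \cap P_\lambda$, chooses a $k$-defined maximal torus $S' \supseteq S$ of $H$, and conjugates $u^{-1}\cdot\lambda$ inside $H(k)$ to some $\mu \in Y_k(S') \subseteq Y_k(L)$. Since $P_\mu = P_\lambda$, the two $k$-defined R-Levi subgroups $L_\lambda(L)$ and $L_\mu(L)$ of $P_\lambda(L)$ are conjugate by a unique $u_0 \in R_u(P_\lambda(L))(k)$, and a short computation using the simple transitivity of $R_u(P_\lambda)$ on R-Levi subgroups of $P_\lambda$ shows $u_0 \cdot \mu = \lambda$ and hence $v' = u_0 \cdot v$. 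If you want to pursue your fixed-point route, the missing ingredient is essentially equivalent to this conjugacy of $k$-defined maximal tori inside $G_v \cap P_\lambda$, so you may as well argue as the paper does.
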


\begin{proof}
Set $P = P_\lambda$ and
let $u \in R_u(P)(k)$ be such that $v'=u\cdot v$. Then $u\inverse\cdot\lambda$
fixes $v$.
Let $H$ be the subgroup of $G_v$ generated by the image of $u\inverse\cdot\lambda$ and $S$;
then $H$ is a $k$-defined subgroup of $G_v$, and hence contains a $k$-defined maximal torus
$S'$ with $S \subseteq S'$.
Moreover, since $\lambda$ commutes with $S$ and $P_{u\inverse\cdot\lambda}=P$,
we have $H \subseteq P \cap G_v$.
Since $u\inverse\cdot \lambda$ is a $k$-defined cocharacter of $H$,
we can find $h \in H(k)$ such that $\mu:=hu\inverse\cdot\lambda \in Y_k(S')$.
Since $S\subseteq S'$, we have $\mu \in Y_k(L)$.
Now $\lambda,\mu \in Y_k(L)$ give rise to the same parabolic subgroup $P=P_\lambda=P_\mu$
of $G$, and hence $P_\lambda(L)=P_\mu(L)$.
The R-Levi subgroups $L_\lambda(L)$ and $L_\mu(L)$ are conjugate by an element
$u_0 \in R_u(P_\lambda(L))(k)$,
i.e., $u_0L_\mu(L)u_0\inverse = L_\lambda(L)$.
We claim that $u_0\cdot\mu  = \lambda$.
To see this, note that since $u_0 \in P$, $u_0L_\mu u_0\inverse$
is an R-Levi subgroup of $P$, and this Levi subgroup contains the image of $\lambda$.
Since $R_u(P)$ acts simply transitively on the set of R-Levi subgroups of $P$,
there is only one R-Levi subgroup of $P$ containing the image of $\lambda$, namely $L_\lambda$ itself,
and hence $u_0L_\mu u_0\inverse = L_{u_0\cdot \mu} = L_\lambda$.
But now, since $u_0$, $h$ and $u$ are all elements of $P$,
we have $p:=u_0hu\inverse \in P$ and $u_0\cdot\mu =  p\cdot \lambda$.
Writing $p = u_1l$, with $u_1\in R_u(P)$ and $l\in L_\lambda$,  we have $p\cdot\lambda = u_1\cdot\lambda$ and
$$
L_\lambda = L_{p\cdot\lambda} = L_{u_1\cdot\lambda} = u_1L_{\lambda}u_1\inverse.
$$
Appealing to the simple transitivity of the action of $R_u(P)$ on the R-Levi subgroups of $P$ again,
we see that $u_1=1$, so $u_0\cdot\mu = p\cdot\lambda = \lambda$, as required.
Finally, we have found $u_0\in R_u(P_\lambda(L))(k)$ such that $v':=\lim_{a\to0} \lambda(a)\cdot v$ exists
and $\mu = u_0\inverse\cdot\lambda$ fixes $v$, so by \cite[Lem.\ 2.12]{GIT}, we have $v'=u_0\cdot v$,
which completes the proof.
\end{proof}

\subsection{$k$-rank}

Let $H$ be a subgroup of $G$, not necessarily $k$-defined.  It makes sense to speak of $k$-defined (or $k$-split) subgroups of $H$: a subgroup of $H$ is $k$-defined ($k$-split) if it is $k$-defined ($k$-split) as a subgroup of the $k$-defined group $G$.  Likewise we can speak of $k$-defined cocharacters of $H$.  The notion of maximal $k$-defined (or $k$-split) torus of $H$ has the usual meaning.

Below we will need the following result for $H$ of the form $G_v$ for some $v\in V$.  Note that even when $v\in V(k)$, $G_v$ need not be $k$-defined.

\begin{lem}
\label{lem:k-rank}
 Let $H$ be a subgroup of $G$.  Then any two maximal $k$-split tori of $H$ are $H(k)$-conjugate.
\end{lem}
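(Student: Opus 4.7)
The plan is to mimic the classical Borel--Tits proof, using the $k$-defined $G$-centralizers of the $S_i$ (which are $k$-defined Levi subgroups of $G$ even when $H$ itself is not $k$-defined) as a substitute for the missing $k$-structure on $H$, together with the cocharacter-based smoothness and simple-transitivity results recalled earlier in the section.

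I would proceed by induction on the $k$-rank $d$ of $H$. The base case $d=0$ is immediate since then $S_1=S_2=\{1\}$. For $d\ge 1$: by Lemma \ref{lem:torustocochar} choose $\lambda\in Y_k(S_1)$ with $L:=L_\lambda=C_G(S_1)$; because $S_1$ is $k$-split, $\lambda\in Y_k(S_1)\subseteq Y_k(H)$, so $\lambda$ normalises $H$, and Proposition \ref{prop:smoothintersection} then guarantees that $L\cap H$, $P_\lambda\cap H$ and $R_u(P_\lambda)\cap H$ are smooth subgroups of $H$. By the classical (geometric) Borel--Tits theorem applied to the $\bar k$-algebraic group $H$, there is $h\in H(\bar k)$ with $hS_1h\inverse=S_2$; then $hLh\inverse=C_G(S_2)$ is again $k$-defined. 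Using the simple transitivity of $R_u(P_\lambda)$ on its R-Levi subgroups, I would modify $h$ on the right by an element of $(R_u(P_\lambda)\cap H)(\bar k)$ to bring it into $L\cap H$, reducing the problem to $H(k)$-conjugacy of two maximal $k$-split tori of the smaller subgroup $L\cap H$ inside the $k$-defined reductive group $L$. A further reduction via the inductive hypothesis applied to $C_H(\lambda)$ --- whose $k$-rank is $d-1$, once the image of $\lambda$ is split off from $S_1$ using that $S_1$ is $k$-split --- would then yield the desired $H(k)$-conjugating element.

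The main obstacle I anticipate is the final descent step to $H(k)$: because $H$ is not assumed $k$-defined, the standard Galois cohomology argument underlying Borel--Tits is not directly available, and the rationality has to be extracted entirely from the $k$-defined data $S_1$, $S_2$, $L=C_G(S_1)$, and $P_\lambda$. In particular, arranging the modification of $h$ through $(R_u(P_\lambda)\cap H)(\bar k)$ to land in $H(k)$ is delicate, and will likely require combining the simple transitivity of $R_u(P_\lambda)(k)$ on the set of $k$-defined R-Levi subgroups of $P_\lambda$ with the smoothness of the intersections provided by Proposition \ref{prop:smoothintersection}, possibly together with an application of the rational Hilbert--Mumford theorem (Theorem \ref{thm:rat_HMT}) to a suitable $k$-defined affine $G$-variety encoding the pair $(S_1,S_2)$.
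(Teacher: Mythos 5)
Your proposal does not actually give a proof: you yourself flag that the crucial rationality step (bringing the conjugating element from $(R_u(P_\lambda)\cap H)(\bar k)$ down to $H(k)$) is unresolved, and you only say it ``will likely require'' further tools. That is precisely where the difficulty lies, and the sketch does not get past it. The inductive set-up is also shaky: you reduce to $C_H(\lambda)=L\cap H$, but since $\lambda\in Y_k(S_1)$ centralises $S_1$, we have $S_1\subseteq C_H(\lambda)$, so the $k$-rank of $C_H(\lambda)$ is still $d$, not $d-1$; ``splitting off the image of $\lambda$'' is not a well-defined operation on the subgroup, so the induction does not close. Finally, it is not clear that the geometric conjugator $h\in H(\bar k)$ (or even $S_2$ itself) lies in $P_\lambda$, so the step where you invoke simple transitivity of $R_u(P_\lambda)$ on $R$-Levi subgroups cannot be applied to $h$ as stated.

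The paper's proof is much shorter and sidesteps all of this with a different idea: instead of trying to run the Borel--Tits argument inside the non-$k$-defined group $H$, replace $H$ by the $k$-defined subgroup $\widetilde{H}:=\overline{\bigcap_{\gamma\in\Gamma}\gamma\cdot H(k_s)}$, which is $k$-defined by the Galois criterion. Any $k$-defined torus $S\subseteq H$ has $S(k_s)$ dense in $S$ and $\Gamma$-stable, hence $S(k_s)\subseteq\widetilde{H}$ and so $S\subseteq\widetilde{H}$. Thus every maximal $k$-split torus of $H$ is already a maximal $k$-split torus of $\widetilde{H}$, and the standard conjugacy theorem for $k$-defined groups (Springer, Thm.\ 15.2.6) gives an element of $\widetilde{H}(k)\subseteq H(k)$ conjugating them. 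The key manoeuvre you are missing is the construction of this canonical $k$-defined ``hull'' $\widetilde{H}$ inside $H$, which absorbs all the $k$-rational data ($k$-split tori, $k$-points) and lets one cite the known result directly rather than reproving it.
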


\begin{proof}
 Let $\widetilde{H}$ be the closure of $\bigcap_{\gamma\in \Gamma} \gamma\cdot H(k_s)$.  
Then $\widetilde{H}$ is $k$-defined by the Galois criterion.  If $S$ is a $k$-defined torus of 
$H$ then $S(k_s)\subseteq \widetilde{H}$, so $S$ is a subgroup of $\widetilde{H}$ as $S(k_s)$ 
is dense in $S$.  Standard results for $k$-defined groups \cite[Thm.~15.2.6]{springer} imply that 
any two maximal $k$-split tori of $\widetilde{H}$ are $\widetilde{H}(k)$-conjugate.  
The assertion of the lemma now follows.
\end{proof}
 
\begin{defn}
 We define the {\em $k$-rank} of $H$ to be the dimension of a maximal $k$-split torus of $H$.
\end{defn}

\section{Cocharacter-closure}
\label{sec:cocharclosure}

Recall the definitions of a cocharacter-closed $G(k)$-orbit and 
of a cocharacter-closed subset of $V$ from the Introduction,
Definitions \ref{defn:cocharclosedorbit} and \ref{defn:cocharclosure}.
We begin with a few elementary observations.

\begin{rems}
(i). Note that the notions in Definition \ref{defn:cocharclosure}
depend on the given action of $G$ on $V$, because
they are made with reference to the limits along cocharacters of $G$.

(ii). A closed $G$-stable set is cocharacter-closed.

(iii). The cocharacter-closed subsets of $V$ form the closed sets of a topology on $V$
(it is clear that arbitrary intersections and unions of cocharacter-closed sets are cocharacter-closed,
and that the empty set and the whole space $V$ are cocharacter-closed).

(iv). Clearly, if a set is cocharacter-closed over $k$, it is cocharacter-closed over $k_0$ where $k_0$ is any subfield of $k$ (such that $V$ is a $k_0$-defined $G$-variety).

(v) Let $X\subseteq V$.  If $X$ is not $G(k)$-stable then $\ccc{X}$ need not be $G(k)$-stable.  For example, suppose $G$ acts freely on $V$ and let $X= \{v\}$ for some $v\in V$.  Then $v$ is not destabilized by any non-zero $\lambda\in Y(G)$: for if $v'= \lim_{a\to 0} \lambda(a)\cdot v$ then $\lambda$ fixes $v'$, which forces $\lambda= 0$ by the freeness of the action.  Hence $\ccc{X}= X= \{v\}$.
\end{rems}

Recall the definition of the preorder $\prec$ from Definition \ref{defn:preorder}.
Examples \ref{ex:g2}
below compares this preorder with the usual 
degeneration order on orbits in the case $k = \ovl{k}$.
See also Example \ref{ex:rsquares}. 
We have the following related notion.

\begin{defn}
\label{def:accessible}
Suppose $v,v' \in V$.
We say that the orbit $G(k) \cdot v'$ is \emph{$1$-accessible from $G(k) \cdot v$}
if there exists $\lambda \in Y_k(G)$ such that $\lim_{a\to 0} \lambda(a)\cdot v$ exists and lies in $G(k) \cdot v'$.
Similarly, for $n\geq 1$, we say that $G(k) \cdot v'$ is \emph{$n$-accessible} from
$G(k) \cdot v$ provided
there exists a finite sequence of $G(k)$-orbits
$G(k) \cdot v=G(k) \cdot v_1, G(k) \cdot v_2,\ldots,G(k) \cdot v_{n+1} = G(k) \cdot v'$
with $G(k) \cdot v_{i+1}$ $1$-accessible from $G(k) \cdot v_i$ for each $1\leq i \leq n$.
We say $G(k)\cdot v'$ is \emph{accessible} from $G(k)\cdot v$ if it is
$n$-accessible for some $n \geq 1$.
\end{defn}

Note that this definition does not depend on the chosen representative of $G(k) \cdot v$ since if
$\lim_{a\to 0} \lambda(a)\cdot v = v''\in G(k) \cdot v'$ and $g \in G(k)$ then
$\lim_{a\to 0} (g\cdot\lambda)(a)\cdot(g\cdot v) = g\cdot v'' \in G(k) \cdot v'$.
It is clear from the definitions that if $G(k) \cdot v'$ is $1$-accessible from $G(k) \cdot v$ then $G(k) \cdot v'\prec G(k) \cdot v$.
Example
\ref{ex:fromf4} below shows that the converse to this is not true, but we do have the following:

\begin{lem}
\label{lem:transitiveclosure}
Suppose $v \in V$.
\begin{itemize}
\item[(i)] We have $\ccc{G(k) \cdot v} = \bigcup G(k) \cdot v'$, where the union is taken over all $v' \in V$ such that
$G(k)\cdot v'$ is accessible from $G(k)\cdot v$.
\item[(ii)] The preorder $\prec$ coincides with the accessibility relation;
that is, given $v,v' \in V$, $G(k) \cdot v'\prec G(k) \cdot v$ if and only if there exists a finite sequence from $G(k) \cdot v$ to $G(k) \cdot v'$ as in Definition \ref{def:accessible}.
\end{itemize}
\end{lem}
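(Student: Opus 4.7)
\textbf{Plan of proof for Lemma \ref{lem:transitiveclosure}.}

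The two parts are essentially equivalent, so the bulk of the work goes into (i). Let $A$ denote the union on the right-hand side of (i). My plan is to verify three things: (a) $A$ contains $G(k)\cdot v$; (b) $A$ is cocharacter-closed over $k$; (c) every cocharacter-closed subset of $V$ containing $G(k)\cdot v$ contains $A$. These together with the definition of $\ccc{G(k)\cdot v}$ as the smallest such set give the equality. Part (ii) will then follow immediately by unwinding Definition \ref{defn:preorder}, since accessibility is manifestly a property of $G(k)$-orbits rather than of individual points.

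For (a), I note that taking $\lambda$ to be the trivial cocharacter gives $\lim_{a\to 0}\lambda(a)\cdot v = v$, so $G(k)\cdot v$ is $1$-accessible from itself; thus $v\in A$, and since every orbit appearing in the union $A$ is a whole $G(k)$-orbit, $G(k)\cdot v\subseteq A$. For (b), take $w\in A$ and $\mu\in Y_k(G)$ such that $w':=\lim_{a\to 0}\mu(a)\cdot w$ exists. By definition of $A$ there is a chain $G(k)\cdot v = G(k)\cdot v_1,\dots,G(k)\cdot v_{n+1}$ with each step $1$-accessible and $w\in G(k)\cdot v_{n+1}$, say $w = g\cdot v_{n+1}$ with $g\in G(k)$. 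Then $g^{-1}\cdot w' = \lim_{a\to 0}(g^{-1}\cdot\mu)(a)\cdot v_{n+1}$, and $g^{-1}\cdot\mu\in Y_k(G)$, so $G(k)\cdot w'$ is $1$-accessible from $G(k)\cdot v_{n+1}$. Appending this to the chain shows $G(k)\cdot w'$ is accessible from $G(k)\cdot v$, so $w'\in A$.

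For (c), let $C$ be any cocharacter-closed subset of $V$ with $G(k)\cdot v\subseteq C$. A brief preliminary observation: for every $g\in G(k)$, the translate $g\cdot C$ is again cocharacter-closed (one verifies this directly using the fact that $g^{-1}\cdot\mu\in Y_k(G)$ whenever $\mu\in Y_k(G)$, and $\lim_{a\to 0}\mu(a)\cdot(g\cdot c) = g\cdot\lim_{a\to 0}(g^{-1}\cdot\mu)(a)\cdot c$). Since $g\cdot C\supseteq g\cdot G(k)\cdot v = G(k)\cdot v$, minimality of the cocharacter-closure is not quite what we need, but we can apply the same argument to $\ccc{G(k)\cdot v}$ itself to deduce that $\ccc{G(k)\cdot v}$ is $G(k)$-stable. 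Now I claim, by induction on $n$, that every orbit $n$-accessible from $G(k)\cdot v$ lies in $\ccc{G(k)\cdot v}$: for $n=1$, if $\lim_{a\to 0}\lambda(a)\cdot v = g\cdot v'\in G(k)\cdot v'$, then $g\cdot v'\in\ccc{G(k)\cdot v}$ since $v\in \ccc{G(k)\cdot v}$, and $G(k)$-stability then forces $G(k)\cdot v'\subseteq\ccc{G(k)\cdot v}$; the inductive step is identical with $v$ replaced by a representative of the previous orbit in the chain. This shows $A\subseteq\ccc{G(k)\cdot v}$, and combined with (a) and (b), $A = \ccc{G(k)\cdot v}$.

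The only subtle point is the $G(k)$-stability of the cocharacter-closure, since cocharacter-closed sets are not $G(k)$-stable in general; but this follows from the translation argument sketched above applied to the $G(k)$-stable initial set $G(k)\cdot v$. Once (i) is in hand, (ii) is immediate: $G(k)\cdot v'\prec G(k)\cdot v$ means $v'\in\ccc{G(k)\cdot v}$, which by (i) is equivalent to $v'$ lying in some $G(k)$-orbit accessible from $G(k)\cdot v$, equivalently to $G(k)\cdot v'$ itself being accessible from $G(k)\cdot v$.
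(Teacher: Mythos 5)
Your proof is correct and follows essentially the same strategy as the paper's: show the union $A$ of accessible orbits is cocharacter-closed and contains $G(k)\cdot v$ (giving $\ccc{G(k)\cdot v}\subseteq A$), then show the reverse inclusion, and deduce (ii) from (i). The only difference is that you explicitly establish $G(k)$-stability of $\ccc{G(k)\cdot v}$ via translates — a subtlety the paper dismisses as "clear" (it follows because $g\cdot\lambda\in Y_k(G)$ for $g\in G(k)$, so taking limits commutes with the $G(k)$-action) — but this is a matter of spelling out detail, not a different argument.
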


\begin{proof}
(i). Let $X$ denote the union defined above. Then given $v' \in X$,
$G(k) \cdot v'$ is accessible from $G(k)\cdot v$.
But now if $\lim_{a \to 0} \lambda(a) \cdot v' = v''$ exists for some $\lambda \in Y_k(G)$, then $G(k) \cdot v''$
is $1$-accessible from $G(k) \cdot v'$, and hence $G(k) \cdot v''$ is accessible from $G(k) \cdot v$, so $v'' \in X$.
Hence $X$ is cocharacter-closed over $k$.
Since $G(k) \cdot v \subseteq X$, we have $\ccc{G(k) \cdot v} \subseteq X$.
But the reverse inclusion is clear: by definition, the cocharacter-closure of $G(k) \cdot v$
must contain all orbits $1$-accessible from $G(k) \cdot v$,
and all orbits $1$-accessible from those orbits, and so on.

(ii). This follows from (i).
\end{proof}

The following elementary example illustrates 
some of the complexities that can arise, 
even over a field of characteristic 0.

\begin{ex}
\label{ex:rsquares}
Let $k=\RR$ and consider the group $G = \GG_m$ acting on $V=\AA^1$ by
$$
a\cdot z := a^2z.
$$
The group $G(k) = \GG_m(k)$ is just the multiplicative group of the field $\RR$, and
there are three orbits of $G(k)$ on $k$-points of $V$: $G(k) \cdot (-1) = \{x \in \RR \mid x<0\}$, $G(k) \cdot 0=\{0\}$ and $G(k) \cdot 1 = \{x\in \RR\mid x>0\}$.
We have $\ccc{G(k) \cdot (-1)} = G(k) \cdot (-1) \cup \{0\}$ and $\ccc{G(k) \cdot 1} = G(k) \cdot 1 \cup \{0\}$.
On the other hand, since the non-zero $G(k)$-orbits $G(k) \cdot 1$ and $G(k) \cdot (-1)$ are both infinite subsets of $V$,
their Zariski closures are the whole of $\AA^1$.
We also have $G\cdot 1 = G\cdot(-1) = \{z \in \AA^1\mid z\neq 0\}$.
This gives an example of how the cocharacter-closure isn't the same as the closure (or the closure intersected with the set of $k$-points) and
how different parts of the same $G$-orbit may be inaccessible from each other when viewed as $G(k)$-orbits.
\end{ex}

For more examples, see Sections \ref{sec:GLn} and \ref{sec:ex}.

\section{The rational Hilbert-Mumford Theorem}
\label{sec:rationalHMT}

In this section we prove Theorem~\ref{thm:rat_HMT}.  We start with a key technical result.

\begin{prop}
\label{prop:technical}
Let $v \in V$.
Suppose $\lambda \in Y_k(G)$ is such that $v':=\lim_{a\to 0}\lambda(a)\cdot v$ exists but is not $R_u(P_\lambda)(k)$-conjugate to $v$.
Let $S$ be a $k$-split torus of $G_v$.
Then there exists $\tau\in Y_k(C_G(S))$ such that $\tilde v:= \lim_{a\to 0}\tau(a)\cdot v$ exists and lies outside $G(k)\cdot v$.
Moreover, there exists a $k$-split torus $\tilde S$ in $G_{\tilde v}$ such that $\dim \tilde S > \dim S$. 
\end{prop}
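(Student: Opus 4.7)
The plan is to modify $\lambda$ into a cocharacter $\tau \in Y_k(C_G(S))$ whose associated limit $\tilde v$ lies outside the full $G(k)$-orbit of $v$, and then to construct $\tilde S$ from $S$ and $\IM(\tau)$.

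First, I would apply Lemma \ref{lem:torustocochar} (for instance, to the $G$-variety $G \times V$, to capture both $C_G(S)$ and $V^S$) to produce a cocharacter $\mu \in Y_k(S)$ with $C_G(S) = L_\mu$ and $V^S = V^{\IM(\mu)}$. Since $S \subseteq G_v$, we have $\mu \in Y_k(G_v)$, so $\mu$ fixes $v$.

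The technical core is the construction of $\tau$. The natural template is $\tau = n\mu + \lambda'$ for large $n \in \NN$, where $\lambda'$ is a modification of $\lambda$---most naturally of the form $u \cdot \lambda$ for a suitable $u \in R_u(P_\lambda)(k)$---chosen so that $\lambda'$ commutes with $\mu$; then $n\mu + \lambda'$ is a well-defined cocharacter lying in $Y_k(L_\mu) = Y_k(C_G(S))$. The modification preserves the hypothesis: one computes $\lim_{a \to 0} (u\cdot\lambda)(a) \cdot v = u\cdot v'$ (using that $\lim_{a\to 0} \lambda(a) u^{-1} \lambda(a)^{-1} = 1$ for $u \in R_u(P_\lambda)$), and $u\cdot v'$ remains $R_u(P_\lambda)(k)$-equivalent to $v'$, hence is still not $R_u(P_\lambda)(k)$-conjugate to $v$. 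Applying Lemma \ref{lem:limitoflimit} to the pair $(\mu, \lambda')$, for large $n$ the limit $\tilde v = \lim_{a \to 0} \tau(a) \cdot v$ exists and equals $u \cdot v'$. To verify $\tilde v \notin G(k) \cdot v$, I would argue by contradiction via Proposition \ref{prop:ruplevi}: were $\tilde v$ to be $R_u(P_\tau)(k)$-conjugate to $v$, the proposition would yield $R_u(P_\tau(C_G(S)))(k)$-conjugacy, which, translated back through the construction, contradicts $v' \notin R_u(P_\lambda)(k) \cdot v$; the weaker possibility that $\tilde v \in G(k) \cdot v$ without $R_u$-conjugacy is ruled out by a maximal $k$-split torus argument inside $G_v$ in the spirit of Lemma \ref{lem:max_tor_ascent}.

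For the final assertion, set $\tilde S := S \cdot \IM(\tau)$. Since $S$ and $\IM(\tau)$ are commuting $k$-split tori (as $\tau \in C_G(S)$), $\tilde S$ is a $k$-split torus. Both fix $\tilde v$: $\IM(\tau)$ automatically, and for $s \in S$ one has $s \cdot \tilde v = \lim_{a \to 0} \tau(a) \cdot (s \cdot v) = \tilde v$. If $\IM(\tau) \subseteq S$ then $\tau \in Y_k(S)$ fixes $v$ and $\tilde v = v \in G(k) \cdot v$, contradicting the previous step; hence $\dim \tilde S > \dim S$. The main obstacle is the construction of $\tau$---in particular, guaranteeing that a suitable $u \in R_u(P_\lambda)(k)$ exists so that $u \cdot \lambda$ commutes with $\mu$, and promoting the conclusion from ``not $R_u(P_\tau)(k)$-conjugate'' to ``not $G(k)$-conjugate''. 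Both points hinge on a subtle interplay between the R-parabolic structure $P_\lambda$, the $k$-split torus $S$, and Proposition \ref{prop:ruplevi}.
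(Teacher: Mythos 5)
Your overall blueprint---reduce to a cocharacter $\mu$ of $S$ via Lemma~\ref{lem:torustocochar}, combine it with a modified $\lambda$ into a single cocharacter $\tau$ of $L_\mu=C_G(S)$, and then extract $\tilde S$ from $S$ and $\IM(\tau)$---matches the spirit of the paper's proof. But the construction of $\tau$, which you yourself flag as ``the technical core,'' has a genuine gap that the paper navigates differently.

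You propose to find $u\in R_u(P_\lambda)(k)$ with $u\cdot\lambda$ commuting with $\mu$. This is not possible in general. To realize $u\cdot\lambda$ and $\mu$ in a common maximal torus of $G$, one would need a $k$-defined maximal torus of $P_\lambda$ containing $\IM(\mu)$ (so $\lambda$ can be moved into it by $R_u(P_\lambda)(k)$). But there is no reason for $\IM(\mu)$ to lie in $P_\lambda$ at all: $\mu$ fixes $v$ while $\lambda$ merely destabilizes it, and the two cocharacters can sit in quite unrelated R-parabolics. The paper resolves this by choosing a maximally split $k$-defined maximal torus $T$ of $P_\lambda\cap P_\mu$ and conjugating \emph{both} cocharacters into $T$---$\lambda$ by some $u\in R_u(P_\lambda)(k)$ and $\mu$ by some $u_1\in R_u(P_\mu)(k)$, yielding $\nu:=u_1\cdot\mu$. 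The price is that $\nu$ no longer fixes $v$, so the bookkeeping is more delicate, and the final cocharacter is obtained by conjugating $\sigma=\lambda+n\nu$ back by $u_1^{-1}$: $\tau=u_1^{-1}\cdot\sigma=u_1^{-1}\cdot\lambda+n\mu$, which lands in $Y_k(L_\mu)$ because $u_1^{-1}Tu_1$ is a maximal torus containing $\IM(\mu)$ and hence contained in $L_\mu$. So the modification of $\lambda$ ends up being by a product of elements from $R_u(P_\lambda)(k)$ and $R_u(P_\mu)(k)$, not by a single element of $R_u(P_\lambda)(k)$.

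Two further (related but lesser) issues. First, you should, as the paper does, reduce at the outset to the case where $S$ is a \emph{maximal} $k$-split torus of $G_v$ (harmless since $Y_k(C_G(S'))\subseteq Y_k(C_G(S))$ for $S\subseteq S'$). This is what turns ``$G_{\tilde v}$ contains a $k$-split torus of dimension $>\dim S$'' into the conclusion ``$\tilde v\notin G(k)\cdot v$.'' Without maximality the dimension count only yields the last sentence of the proposition, not the penultimate one, and the appeal to Lemma~\ref{lem:max_tor_ascent} needs $S$ to be a maximal $k$-defined torus of $G_v$ anyway. Second, your route for excluding $\tilde v\in G(k)\cdot v$ goes through Proposition~\ref{prop:ruplevi} and a comparison of $R_u(P_\tau)$ with $R_u(P_\lambda)$; this comparison is nontrivial (for large $n$ one controls $P_\tau$ relative to $P_\mu$, not $P_\lambda$) and is left unaddressed. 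The paper avoids this entirely: it shows, via Lemma~2.13 of \cite{GIT} and the simple transitivity of $R_u(P_\lambda)$ on R-Levis, that if the generated torus had the same dimension as $S$ then $v'$ would in fact be $R_u(P_\lambda)(k)$-conjugate to $v$, contradicting the hypothesis directly; the ``not $G(k)$-conjugate'' conclusion then drops out of the maximality of $S$.
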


\begin{proof}
It does no harm to assume that $S$ is a maximal $k$-split torus of $G_v$:
else $S \subset S'$ for a maximal $k$-split torus $S'$ of $G_v$ and
then $Y_k(C_G(S')) \subseteq Y_k(C_G(S))$, so the result for $S$ follows from the result for $S'$.
Using Lemma \ref{lem:torustocochar}, we can find $\mu \in Y_k(S)$ such that $V^S = V^{\IM(\mu)}$ and $L:=C_G(S) = L_\mu$.
Now let $T$ be a maximally split $k$-defined maximal torus of $P_\lambda \cap P_\mu$. 
There exists $u \in R_u(P_\lambda)(k)$ such that $u\cdot\lambda \in Y_k(T)$.
Moreover, $\lim_{a\to 0} (u\cdot\lambda)(a)\cdot v = u\cdot v'$ exists and cannot be $R_u(P_\lambda)(k)$-conjugate to $v$, since
then $v'$ would also be $R_u(P_\lambda)(k)$-conjugate to $v$, contradicting our hypothesis.
Hence we may replace $\lambda$ with $u\cdot\lambda$ and $v'$ with $u\cdot v'$ if we like and assume that $\lambda \in Y_k(T)$.

Now there exists $u_1 \in R_u(P_\mu)(k)$ such that $\nu: = u_1\cdot\mu$ belongs to $Y_k(T)$, and since $\mu$ fixes $v$, $\lim_{a \to 0} \nu(a)\cdot v = u_1\cdot v$ exists.
Since $\lambda$ and $\nu$ belong to $Y_k(T)$, they commute.
By Lemma~\ref{lem:twoways}, $v'':=\lim_{a\to 0} (\lambda + n\nu)(a)\cdot v$ exists for all positive integers $n$, and this limit does not depend on $n$.
Choosing $n$ sufficiently large, we may assume that $P_{\lambda + n\nu} \subseteq P_\nu = P_\mu$ and $R_u(P_\mu) \subseteq R_u(P_{\lambda+n\nu})$ \cite[Lem.~2.15]{GIT}.
In particular, $u_1 \in R_u(P_{\lambda + n\nu})(k)$.
Let $\sigma = \lambda + n\nu$ for such a choice of $n$.
Note that, since $v''$ can be obtained as a limit along $\lambda$ and as a limit along $\nu$ (Lemma~\ref{lem:twoways}), $\lambda$ and $\nu$ both fix $v''$.
Now $\nu = u_1\cdot\mu$; so $u_1Su_1\inverse \subseteq G_{v''}$, since $V^S = V^{\IM(\mu)}$, and $\IM(\lambda)$ commutes
with $u_1Su_1\inverse$, since $L_\mu = C_G(S)$.
Hence $\IM(\lambda)$ and $u_1Su_1\inverse$ generate a 
$k$-defined torus $S''$ of $G_{v''}$ of dimension at least as large as the dimension of $S$.
Note that $S''$ is $k$-split, by \cite[Thm.\ 15.4 (i)]{borel}.

If $S'' = u_1Su_1\inverse$, then $\IM(\lambda) \subseteq u_1Su_1\inverse$,
and therefore $\lambda$ fixes $u_1\cdot v$.
Since $\lambda$ evaluates in $P_\mu$, $u_1 \in R_u(P_\mu)(k)$ and $R_u(P_\mu)$ is a closed connected normal subgroup of $P_\mu$,
we can write $u_1 = xu_2$ with $x \in P_{-\lambda}(k)$, $u_2 \in R_u(P_\lambda)(k)$ by \cite[Thm.\ 13.4.2, Cor.\ 13.4.4]{springer}.
But now \cite[Lem.\ 2.13]{GIT} implies that $v' = u_2\cdot v \in R_u(P_\lambda)(k)\cdot v$, which contradicts the hypothesis on $\lambda$.
Hence we must conclude that $\dim S'' > \dim S$. In particular, $v$ and $v''$ are not $G(k)$-conjugate, since $S$ is a maximal $k$-split torus of $G_v$.

Finally, recall that $u_1 \in R_u(P_\sigma)(k)$, so $\tilde v := \lim_{a\to 0} (u_1\inverse\cdot\sigma)(a)\cdot v = u_1\inverse\cdot v''$ exists
and is also not $G(k)$-conjugate to $v$.
But $u_1\inverse\cdot\sigma = u_1\inverse\cdot\lambda + n\mu \in Y_k(L_\mu) = Y_k(C_G(S))$ so, 
setting $\tau=u_1\inverse\cdot \sigma$ and $\tilde S = u_1\inverse S'' u_1$,
we are done.
\end{proof}

We note here a consequence of Proposition \ref{prop:technical} which we 
use at the end of Section \ref{sec:up_and_down}.

\begin{cor}
\label{cor:birkes1}
Let $v \in V$.
If $G_v$ contains a maximal $k$-split torus of $G$, then $G(k) \cdot v$ is cocharacter-closed over $k$.
\end{cor}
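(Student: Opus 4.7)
The plan is to proceed by contradiction, using Proposition \ref{prop:technical} as the main engine. Suppose $G(k)\cdot v$ is not cocharacter-closed over $k$. Then there exists $\lambda \in Y_k(G)$ which properly destabilizes $v$ over $k$, i.e., $v' := \lim_{a\to 0}\lambda(a)\cdot v$ exists in $V$ but $v' \notin G(k)\cdot v$. In particular, $v'$ is not $R_u(P_\lambda)(k)$-conjugate to $v$, so the hypothesis of Proposition \ref{prop:technical} is met.

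Let $S$ denote the maximal $k$-split torus of $G$ which, by assumption, is contained in $G_v$. Applying Proposition \ref{prop:technical} with this choice of $S$ (which is \emph{a fortiori} a $k$-split torus of $G_v$), I obtain a point $\tilde v \in V$ and a $k$-split torus $\tilde S \subseteq G_{\tilde v}$ with $\dim \tilde S > \dim S$. But every $k$-split torus of $G_{\tilde v}$ is in particular a $k$-split torus of $G$, so $\tilde S$ is a $k$-split torus of $G$ strictly larger than $S$. This contradicts the maximality of $S$ as a $k$-split torus of $G$, completing the proof.

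Since Proposition \ref{prop:technical} has already done all the heavy lifting (producing a strictly larger $k$-split torus in the stabilizer of some destabilization of $v$), there is no real obstacle here; the corollary is essentially an immediate reinterpretation of the proposition's conclusion under the stronger maximality assumption (maximal in $G$ rather than merely in $G_v$). The only thing worth highlighting in the write-up is the trivial but crucial observation that a $k$-split torus of the subgroup $G_{\tilde v}$ is automatically a $k$-split torus of the ambient group $G$, so that the dimension inequality $\dim \tilde S > \dim S$ genuinely contradicts maximality in $G$.
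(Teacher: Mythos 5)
Your proof is correct and takes the same route as the paper: apply Proposition \ref{prop:technical} to the maximal $k$-split torus $S$ of $G$ sitting inside $G_v$, and note that the resulting $k$-split torus $\tilde S \subseteq G_{\tilde v} \subseteq G$ of strictly larger dimension contradicts the maximality of $S$ in $G$. The paper states this in a single line; your write-up simply makes the contrapositive structure and the implication ``not $G(k)$-conjugate $\Rightarrow$ not $R_u(P_\lambda)(k)$-conjugate'' explicit, which are the correct details being suppressed.
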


\begin{proof}
Let $S$ be a  maximal $k$-split torus of $G$ contained in $G_v$.
The result now follows from Proposition \ref{prop:technical} applied to $S$. 
\end{proof}

We can now give our generalisation of the Hilbert-Mumford Theorem.  Recall in particular the notions of cocharacter-closed and cocharacter-closure, 
Definitions \ref{defn:cocharclosedorbit} and 
\ref{defn:cocharclosure},
the preorder $\prec$ on the $G(k)$-orbits of $V$,
Definition \ref{defn:preorder}, 
and the relationship of these notions with the notion of accessibility,
Lemma \ref{lem:transitiveclosure}.
The key result, which proves Theorem~\ref{thm:rat_HMT}, is the following.

\begin{thm}[The rational Hilbert-Mumford Theorem]
\label{thm:uniqueaccessible}
Suppose $v \in V$.
\begin{itemize}
\item[(i)] There is a unique cocharacter-closed (over $k$) $G(k)$-orbit $1$-accessible
from $G(k) \cdot v$.
\item[(ii)] The orbit from part (i) is the unique
cocharacter-closed $G(k)$-orbit in $\ccc{G(k) \cdot v}$.
\item[(iii)] If $G(k) \cdot v'\prec G(k) \cdot v$, then the unique cocharacter-closed $G(k)$-orbits in $\ccc{G(k) \cdot v}$ and $\ccc{G(k) \cdot v'}$ are equal.
\end{itemize}
\end{thm}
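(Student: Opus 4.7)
The plan is to establish the three parts in turn; (ii) and (iii) will reduce to the key statement (i). The main technical hurdle is the uniqueness in (i), which requires simultaneously modifying two cocharacters $\mu_1,\mu_2$ to commute while preserving the $G(k)$-orbits of their limits from $v$.

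\textbf{Existence in (i).} I iterate Proposition \ref{prop:technical}. Start with $v^{(0)}:=v$ and the trivial cocharacter $\mu^{(0)}:=0$, maintaining the invariant $\lim_{a\to 0}\mu^{(i)}(a)\cdot v=v^{(i)}$. If $G(k)\cdot v^{(i)}$ is cocharacter-closed, stop. Otherwise, pick $\lambda_i\in Y_k(G)$ properly destabilizing $v^{(i)}$, and choose a maximal $k$-split torus $S_i$ of $G_{v^{(i)}}$ with $\IM(\mu^{(i)})\subseteq S_i$ (possible since $\mu^{(i)}$ fixes $v^{(i)}$, so its image is a $k$-split torus of $G_{v^{(i)}}$). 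Proposition \ref{prop:technical} applied to $(v^{(i)},\lambda_i,S_i)$ yields $\tau_i\in Y_k(C_G(S_i))$ with $v^{(i+1)}:=\lim\tau_i(a)\cdot v^{(i)}\notin G(k)\cdot v^{(i)}$ and $G_{v^{(i+1)}}$ containing a $k$-split torus of dimension strictly exceeding $\dim S_i$. Since $\tau_i$ commutes with $\mu^{(i)}$, Lemma \ref{lem:limitoflimit} gives $\mu^{(i+1)}:=n\mu^{(i)}+\tau_i\in Y_k(G)$ with $\lim\mu^{(i+1)}(a)\cdot v=v^{(i+1)}$ for $n$ large. The $k$-ranks strictly increase but are bounded by the $k$-rank of $G$, so the procedure halts at a cocharacter-closed $G(k)$-orbit that is $1$-accessible from $G(k)\cdot v$.

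\textbf{Uniqueness in (i).} Suppose $v_i=\lim\mu_i(a)\cdot v$ for $i=1,2$ give cocharacter-closed orbits. Choose a $k$-defined maximal torus $T$ of $G$ contained in $P_{\mu_1}^0\cap P_{\mu_2}^0$ (existing by standard Borel--Tits arguments applied to $G^0$, as already exploited in the proof of Proposition \ref{prop:technical}). Using the simply transitive action of $R_u(P_{\mu_i})(k)$ on the $k$-defined maximal tori of $P_{\mu_i}$, pick $u_i\in R_u(P_{\mu_i})(k)$ with $u_i\cdot\mu_i\in Y_k(T)$. A short computation, exploiting that the limit along $\mu_i$ is unaffected by pre-translation of its argument by an element of $R_u(P_{\mu_i})$, shows $\lim(u_i\cdot\mu_i)(a)\cdot v=u_i\cdot v_i\in G(k)\cdot v_i$. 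Replacing $\mu_i$ and $v_i$ by these conjugates, we may assume $\mu_1,\mu_2\in Y_k(T)$, hence commute. Lemma \ref{lem:twoways} then produces a common limit $w=\lim\mu_2(a)\cdot v_1=\lim\mu_1(a)\cdot v_2$, which by cocharacter-closedness must lie in both $G(k)\cdot v_1$ and $G(k)\cdot v_2$, so these orbits coincide.

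\textbf{Parts (ii) and (iii).} For (ii), let $\mathcal{O}$ be any cocharacter-closed orbit in $\ccc{G(k)\cdot v}$ and write $\mathcal{O}(u)$ for the unique $1$-accessible cocharacter-closed orbit from $u$ (provided by (i)). By Lemma \ref{lem:transitiveclosure}(i) there is a chain of $1$-accessibilities $v=v^{(0)},v^{(1)},\dots,v^{(N)}$ with $v^{(N)}\in\mathcal{O}$, so $\mathcal{O}(v^{(N)})=\mathcal{O}$. The key sublemma is $\mathcal{O}(v^{(i)})=\mathcal{O}(v^{(i+1)})$: given $v^{(i+1)}=\lim\sigma_i(a)\cdot v^{(i)}$, re-run the existence construction starting from $v^{(i+1)}$ with the initial torus chosen to contain $\IM(\sigma_i)$, so every cocharacter produced commutes with $\sigma_i$ (and, inductively, with the previously produced ones); iterated Lemma \ref{lem:limitoflimit} then exhibits the endpoint as a $1$-accessible limit of $v^{(i)}$, and uniqueness in (i) forces the two orbits to agree. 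Induction gives $\mathcal{O}=\mathcal{O}(v)$. Part (iii) is now immediate: if $v'\in\ccc{G(k)\cdot v}$ then $\ccc{G(k)\cdot v'}\subseteq\ccc{G(k)\cdot v}$ by monotonicity of cocharacter-closure, so the unique cocharacter-closed orbits supplied by (ii) for the two sets must coincide.
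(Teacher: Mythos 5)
Your proof is correct, and parts (i) and (iii) match the paper essentially line for line; part (ii) takes a genuinely different route. For (i), the existence argument bookkeeps a single running cocharacter $\mu^{(i)}$ and extends it by $\tau_i$ at each stage, which is a mild repackaging of the paper's iteration of Proposition~\ref{prop:technical} combined with Lemma~\ref{lem:limitoflimit}; the termination via strictly increasing $k$-rank and the uniqueness argument (conjugating both destabilizing cocharacters into a common maximally split $k$-defined maximal torus and invoking Lemma~\ref{lem:twoways}) are exactly the paper's. One small slip: $R_u(P_{\mu_i})(k)$ acts simply transitively on the set of $k$-defined \emph{R-Levi subgroups} of $P_{\mu_i}$, not on $k$-defined maximal tori. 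What you actually want (and what goes through) is that $T$ lies in a unique R-Levi $L$, which is $k$-defined because $T$ is, and then $u_i\cdot\mu_i$ is a central cocharacter of $L$, hence lands in $Y_k(T)$.

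For (ii) the paper proceeds differently: it takes the accessibility chain $v=v_1,\dots,v_n=v''$ ending at a cocharacter-closed orbit, and \emph{collapses it from the end}, repeatedly replacing the last two links $v_{n-2}\to v_{n-1}\to v''$ by a single link $v_{n-2}\to v''$ using $R_u(P)$-conjugation together with Lemmas~\ref{lem:twoways} and~\ref{lem:limitoflimit}, until $v''$ is shown $1$-accessible from $v$; uniqueness in (i) is then applied once. You instead prove the sublemma that the map $u\mapsto\mathcal O(u)$ is constant along $1$-accessibility, by re-running the existence construction from $v^{(i+1)}$ while insisting that every torus $S_j$ in the construction (not just the initial one, as your phrasing suggests) contain $\IM(\sigma_i)$; this is legitimate because $\sigma_i$ fixes every intermediate point $v^{(i+1,j)}$ (an easy induction, since $\sigma_i$ commutes with every subsequent cocharacter produced) and commutes with the running $\mu^{(j)}$, so the two $k$-split tori generate a $k$-split torus inside the stabilizer. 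One final application of Lemma~\ref{lem:limitoflimit} plus uniqueness in (i) at \emph{each} step of the chain finishes the argument. Both approaches use the same toolkit; the paper's is more direct but requires carefully managing $R_u(P)$-conjugations in the telescoping step, whereas yours is more modular, at the cost of re-running the existence construction and keeping track of the extra commuting constraint through all of its stages.
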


\begin{proof}
(i). We first show existence. If $G(k) \cdot v$ is already cocharacter-closed over $k$, then there is nothing to do.
Otherwise, there exists $\lambda \in Y_k(G)$ such that $v' = \lim_{a\to 0}\lambda(a)\cdot v$ exists and is not $G(k)$-conjugate to $v$.
By Proposition \ref{prop:technical}, we can even assume that $\lambda \in Y_k(C_G(S))$, where $S$ is a maximal $k$-split torus of $G_v$,
and that the  $k$-rank of $G_{v'}$ is strictly greater than the $k$-rank of $G_v$.
If $G(k) \cdot v'$ is cocharacter-closed over $k$ then we are done.
If not, repeat the process to find $\mu \in Y_k(C_G(S_1))$ such that $v''=\lim_{a\to 0}\mu(a)\cdot v'$ exists
and $G_{v''}$ has strictly greater $k$-rank than $G_{v'}$.
Note that, since $\mu \in Y_k(C_G(S_1))$ and $\IM(\lambda) \subseteq S_1$, $\lambda$ and $\mu$ commute,
so by Lemma \ref{lem:limitoflimit} $G(k) \cdot v''$ is $1$-accessible from $G(k) \cdot v$.
Again, if $G(k) \cdot v''$ is cocharacter-closed over $k$ then we are done.
Otherwise, repeat the process again.
Since the $k$-rank of the stabilizer increases at each step, the process must terminate at a cocharacter-closed orbit which is
$1$-accessible from $G(k) \cdot v$.

For uniqueness, suppose $G(k) \cdot v_1$ and $G(k) \cdot v_2$ are two cocharacter-closed orbits $1$-accessible from $G(k) \cdot v$.
 Choose $\lambda_1, \lambda_2 \in Y_k(G)$ such that $\lim_{a\to 0} \lambda_i(a)\cdot v\in G(k)\cdot v_i$ for $i= 1,2$.  Without loss we can assume that $\lim_{a\to 0} \lambda_i(a)\cdot v= v_i$ for $i= 1,2$.
Then $P_{\lambda_1} \cap P_{\lambda_2}$ contains a maximally split $k$-defined maximal torus of $G$ and we can conjugate $\lambda_1$ (resp.\ $\lambda_2$) by an element
of $R_u(P_{\lambda_1})(k)$ (resp.\ an element of $R_u(P_{\lambda_2})(k)$) so that $\lambda_1$ and $\lambda_2$ commute.
Let $v'= \lim_{a\to 0} \lambda_1(a)\cdot v_2= \lim_{a\to 0} \lambda_2(a)\cdot v_1$ (these limits exist and are equal by Lemma~\ref{lem:twoways}). Since $G(k) \cdot v_i$ is cocharacter-closed over $k$ for $i=1,2$, $v'$ is $G(k)$-conjugate to $v_1$ and $v_2$,
and hence $G(k) \cdot v_1 = G(k) \cdot v' = G(k) \cdot v_2$, as required. This completes the proof of (i).

(ii). Suppose $G(k) \cdot v'$ is the unique cocharacter-closed orbit $1$-accessible from $G(k) \cdot v$, as provided by part (i),
and suppose $G(k) \cdot v''$ is another cocharacter-closed orbit in $\ccc{G(k) \cdot v}$.
By Lemma \ref{lem:transitiveclosure}(i), there is a finite sequence $G(k) \cdot v = G(k) \cdot v_1,G(k) \cdot v_2,\ldots,G(k) \cdot v_n = G(k) \cdot v''$
of orbits with $G(k) \cdot v_{i+1}$ $1$-accessible from $G(k) \cdot v_i$ for each $1\leq i \leq n-1$.
By choosing our representatives suitably, we have cocharacters $\lambda$ and $\mu \in Y_k(G)$ such that
$\lim_{a\to 0} \lambda(a)\cdot v_{n-2} = v_{n-1}$ and $\lim_{a\to 0} \mu(a)\cdot v_{n-1} = v_n = v''$.
Now, by the usual argument, we can find $u_1 \in R_u(P_\lambda)(k)$ and $u_2 \in R_u(P_\mu)(k)$
such that $\sigma:=u_1\cdot\lambda$ and $\tau:=u_2\cdot\mu$ commute.
Moreover, $\lim_{a\to 0} \sigma(a) \cdot v_{n-1} = u_1\cdot v_{n-1}$ and $\lim_{a\to 0} \tau(a)\cdot v_{n-1} = u_2\cdot v''$.
Hence by Lemma~\ref{lem:twoways},
$$
\lim_{a\to 0} \sigma(a)\cdot (u_2\cdot v'') \textrm{ exists and equals } \lim_{a\to 0} \tau(a)\cdot(u_1\cdot v_{n-1}).
$$
Call this common limit $w$.
Since the orbit $G(k) \cdot v''$ is cocharacter-closed over $k$, we have that $G(k) \cdot w = G(k) \cdot v''$.
But, since $u_1\cdot v_{n-1} = \lim_{a\to0} \sigma(a) \cdot v_{n-2}$ and $\sigma$ and $\tau$ commute,
we can apply Lemma \ref{lem:limitoflimit} to conclude that there exists $n \in \NN$ such that
$\lim_{a\to 0} (n\sigma + \tau)(a)\cdot v_{n-2} = w \in G(k) \cdot v''$; in particular, $G(k) \cdot v''$ is $1$-accessible from $G(k) \cdot v_{n-2}$.
Continuing in this way, we conclude that $G(k) \cdot v''$ is $1$-accessible from $G(k) \cdot v$ and hence $G(k) \cdot v''=G(k) \cdot v'$ by the uniqueness in part (i).
This completes the proof of (ii).

(iii). If $G(k) \cdot v'\prec G(k) \cdot v$, then $v' \in \ccc{G(k) \cdot v}$ and hence $\ccc{G(k) \cdot v'} \subseteq \ccc{G(k) \cdot v}$.
This means that any cocharacter-closed orbit in $\ccc{G(k) \cdot v'}$ is also contained in $\ccc{G(k) \cdot v}$,
and the uniqueness from part (ii) gives the result.
\end{proof}

\begin{rems}
\label{rem:Levy}
(i).
See Example \ref{ex:charpol} for a linear algebra example illustrating
the uniqueness of the cocharacter-closed orbit in the cocharacter-closure.

(ii).
In \cite[Thm.\ 3.4]{levy2}, Levy considers the case where $k$ is perfect and proves
the following result: Let $v \in V(k)$ and assume that two limits
$s_1 = \lim_{a \to 0} \lambda(a) \cdot v$, $s_2 = \lim_{a \to 0} \mu(a) \cdot v$ along
$k$-defined cocharacters $\lambda$ and $\mu$ exist. Further assume that both 
$G \cdot s_1$ and $G \cdot s_2$ are Zariski-closed. Then $s_1$ and $s_2$ are $G(k)$-conjugate.
Since $k$ is assumed to be perfect, it is known that the orbits through
$s_1$ and $s_2$ are Zariski-closed if and only if they are cocharacter-closed
over $k$, compare Theorem \ref{thm:descent+ascent} and Proposition \ref{prop:perfectorseparable}(i) below.
Hence Levy's conjugacy result follows from the uniqueness assertion
of Theorem \ref{thm:uniqueaccessible},
and moreover holds for arbitrary (possibly non-rational) points $v \in V$.
\end{rems}

The following result is a rational version of the form of the Hilbert-Mumford Theorem given in \cite[Thm.\ 1.4]{kempf}.  In our setting, it is a direct consequence
of Theorem \ref{thm:uniqueaccessible}.

\begin{cor}
\label{cor:rationalHMT}
Suppose $X$ is a $G(k)$-stable co\-cha\-racter-closed
subset of $V$ which meets $\ccc{G(k) \cdot v}$.
Then there exists $\lambda\in Y_k(G)$
such that $\lim_{a\to 0} \lambda(a)\cdot v$ exists and lies in $X$.
\end{cor}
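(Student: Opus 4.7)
The plan is to reduce everything to the uniqueness statement in Theorem \ref{thm:uniqueaccessible}. Since $X\cap\ccc{G(k)\cdot v}$ is non-empty by hypothesis, I would begin by picking an arbitrary point $w\in X\cap\ccc{G(k)\cdot v}$. The point of doing this is that $w$ witnesses both containments I want to exploit: on the one hand $G(k)\cdot w\prec G(k)\cdot v$ (by definition of $\prec$), and on the other $G(k)\cdot w\subseteq X$ (using $G(k)$-stability of $X$).

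Next I would observe that $\ccc{G(k)\cdot w}\subseteq X$. Indeed, $X$ is a cocharacter-closed subset of $V$ containing $G(k)\cdot w$, and $\ccc{G(k)\cdot w}$ is by definition the smallest such set; this is the only place where the hypotheses on $X$ are used, and it is routine.

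The core step is then to apply Theorem \ref{thm:uniqueaccessible}. Let $\OO$ denote the unique cocharacter-closed $G(k)$-orbit sitting inside $\ccc{G(k)\cdot v}$, which exists by parts (i) and (ii) of that theorem. Since $G(k)\cdot w\prec G(k)\cdot v$, part (iii) of the theorem tells us that $\OO$ is also the unique cocharacter-closed $G(k)$-orbit inside $\ccc{G(k)\cdot w}$. In particular, $\OO\subseteq \ccc{G(k)\cdot w}\subseteq X$.

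Finally, by Theorem \ref{thm:uniqueaccessible}(i), $\OO$ is $1$-accessible from $G(k)\cdot v$, so by the definition of accessibility there exists $\lambda\in Y_k(G)$ with $\lim_{a\to 0}\lambda(a)\cdot v$ existing and lying in $\OO\subseteq X$, as desired. There is no real obstacle here: the work has all been done in Theorem \ref{thm:uniqueaccessible}, and the role of the present corollary is just to package the statement in a form that parallels the classical Hilbert--Mumford result of \cite[Thm.~1.4]{kempf}. The only mild care needed is the observation that a cocharacter-closed $G(k)$-stable set automatically contains the cocharacter-closure of each of its $G(k)$-orbits, which is immediate from the minimality in Definition \ref{defn:cocharclosure}(b).
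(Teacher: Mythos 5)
Your proof is correct and follows essentially the same route as the paper's: both reduce to Theorem \ref{thm:uniqueaccessible} by observing that the unique cocharacter-closed orbit common to $\ccc{G(k)\cdot v}$ and $\ccc{G(k)\cdot w}$ (with $w\in X\cap\ccc{G(k)\cdot v}$) sits inside $X$ and is $1$-accessible from $G(k)\cdot v$. The only cosmetic difference is the order in which parts (ii) and (iii) of the theorem are invoked; the underlying argument is identical.
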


\begin{proof}
Since $X \cap \ccc{G(k) \cdot v}$ is $G(k)$-stable, it contains a $G(k)$-orbit.
Since the intersection is cocharacter-closed, it contains the cocharacter-closure of this orbit, and hence it contains
the corresponding unique cocharacter-closed orbit provided by Theorem \ref{thm:uniqueaccessible}(ii).
However, this cocharacter-closed orbit must also be the unique cocharacter-closed orbit contained in $\ccc{G(k) \cdot v}$,
by Theorem \ref{thm:uniqueaccessible}(iii).
Since this orbit is $1$-accessible from $G(k) \cdot v$ by Theorem \ref{thm:uniqueaccessible}(i),
we get the result required.
\end{proof}

\begin{rem}
\label{rem:no_optimality}
 We do not know whether the strengthened version of the Hilbert-Mumford Theorem discussed in the Introduction can be extended to arbitrary fields.  For more on this, see \cite[Sec.~1]{GIT}.
\end{rem}

\section{Ascent and descent}
\label{sec:up_and_down}

Given a field extension $k'/k$,
we use the terminology ``Galois descent''
to refer to a result which guarantees that
a given $G(k)$-orbit is cocharacter-closed over $k$ provided the corresponding
$G(k')$-orbit is cocharacter-closed over $k'$, and vice versa for ``Galois ascent''.
Likewise, we refer to ``Levi descent'' if the result at hand implies
that a given $L(k)$-orbit (for $L$ a $k$-defined Levi subgroup) is cocharacter-closed over $k$ provided the corresponding
$G(k)$-orbit is cocharacter-closed over $k$, and vice versa for ``Levi ascent''.

We begin this section with a further consequence of
Proposition \ref{prop:technical}.

\begin{cor}
\label{cor:mainconjcocharclosed}
Let $v \in V$.
Suppose $G(k)\cdot v$ is cocharacter-closed over $k$.
Then for all $\lambda \in Y_k(G)$ such that $v':=\lim_{a\to0}\lambda(a)\cdot v$ exists,
$v'$ is $R_u(P_\lambda)(k)$-conjugate to $v$.
\end{cor}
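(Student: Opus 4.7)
The plan is to prove the contrapositive and reduce immediately to Proposition~\ref{prop:technical}. Suppose some $\lambda \in Y_k(G)$ destabilizes $v$ to a limit $v'=\lim_{a\to 0}\lambda(a)\cdot v$ which is \emph{not} $R_u(P_\lambda)(k)$-conjugate to $v$; the goal is to produce a $k$-defined cocharacter whose limit on $v$ leaves the orbit $G(k)\cdot v$, showing that $G(k)\cdot v$ fails to be cocharacter-closed over $k$.

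To apply Proposition~\ref{prop:technical}, I need a $k$-split torus $S$ of $G_v$; the trivial torus $S=\{1\}$ suffices (any choice would work, and indeed the proposition is stated so as to allow a free choice of $S$). With this $S$, the hypotheses of Proposition~\ref{prop:technical} are exactly the hypotheses I have assumed, so it produces a cocharacter $\tau \in Y_k(C_G(S)) \subseteq Y_k(G)$ such that $\tilde v := \lim_{a\to 0} \tau(a)\cdot v$ exists and lies outside $G(k)\cdot v$.

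The existence of such a $\tau$ is precisely a witness that $G(k)\cdot v$ is not cocharacter-closed over $k$, contradicting the hypothesis. Hence every $\lambda \in Y_k(G)$ for which $\lim_{a\to 0}\lambda(a)\cdot v$ exists must satisfy $v' \in R_u(P_\lambda)(k)\cdot v$, as required.

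There is no real obstacle here: all of the work has already been done in Proposition~\ref{prop:technical}, and this corollary simply repackages it as a statement about cocharacter-closed orbits. The only thing worth flagging is that the $R_u(P_\lambda)(k)$-conjugacy conclusion is strictly stronger than $G(k)$-conjugacy, so the corollary is genuinely more informative than merely saying that $v'\in G(k)\cdot v$ (which is tautological from the definition of cocharacter-closed).
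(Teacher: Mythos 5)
Your proof is correct and takes essentially the same route as the paper: both arguments invoke Proposition~\ref{prop:technical} directly and conclude by contradiction/contraposition. Your explicit remark that the trivial torus $S=\{1\}$ serves as the required $k$-split torus of $G_v$ is a helpful clarification that the paper's proof leaves implicit.
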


\begin{proof}
Suppose, for contradiction, that we have $\lambda \in Y_k(G)$ such that $v'$ exists but is not $R_u(P_\lambda)(k)$-conjugate to $v$.
Then, by Proposition \ref{prop:technical}, there exists a point $v''$ which can be obtained as the limit of $v$ along a
$k$-defined cocharacter of $G$ but is not $G(k)$-conjugate to $v$.
But this contradicts the assumption that $G(k)\cdot v$ is cocharacter-closed over $k$.
\end{proof}

\begin{rem}
Corollary \ref{cor:mainconjcocharclosed}
generalizes \cite[Thm.\ 3.10]{GIT}
by removing the connectedness assumption on $G$.
\end{rem}

As a direct consequence of Corollary \ref{cor:mainconjcocharclosed}, we see that cocharacter-closedness only depends on the identity component of $G$.

\begin{cor}
\label{cor:conn_nonconn}
Let $v \in V$.
The orbit
$G(k)\cdot v$ is cocharacter-closed over $k$ if and only if $G^0(k)\cdot v$ is cocharacter-closed over $k$.
\end{cor}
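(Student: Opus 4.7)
The plan is to deduce this directly from Corollary \ref{cor:mainconjcocharclosed}, together with the elementary observation that cocharacters of $G$ and $G^0$ coincide. The crucial structural fact behind both directions is that the image of any cocharacter is a connected subgroup (a torus or trivial), so $Y_k(G) = Y_k(G^0)$, and moreover for any $\lambda \in Y_k(G)$ we have $R_u(P_\lambda) \subseteq G^0$, since $R_u(P_\lambda)$ is connected (equivalently, $R_u(P_\lambda) = R_u(P_\lambda^0)$, as recalled in Section~\ref{subsec:noncon}).

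For the ``if'' direction, suppose $G^0(k)\cdot v$ is cocharacter-closed over $k$. Let $\lambda \in Y_k(G)$ be such that $v' := \lim_{a\to 0}\lambda(a)\cdot v$ exists. Since $\lambda \in Y_k(G^0)$, the hypothesis gives $v' \in G^0(k)\cdot v \subseteq G(k)\cdot v$, so $G(k)\cdot v$ is cocharacter-closed over $k$. This direction uses no input beyond the identification $Y_k(G)=Y_k(G^0)$.

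For the ``only if'' direction, suppose $G(k)\cdot v$ is cocharacter-closed over $k$, and let $\lambda \in Y_k(G^0) = Y_k(G)$ be such that $v' := \lim_{a\to 0}\lambda(a)\cdot v$ exists. Applying Corollary \ref{cor:mainconjcocharclosed} to $G$ and $\lambda$, there exists $u \in R_u(P_\lambda)(k)$ with $v' = u\cdot v$. Since $R_u(P_\lambda) \subseteq G^0$, we have $u \in G^0(k)$, and hence $v' \in G^0(k)\cdot v$, as required.

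There is no real obstacle here: both directions are essentially immediate once one has Corollary \ref{cor:mainconjcocharclosed} in hand, and the only point requiring care is to invoke the (easy but essential) fact that unipotent radicals of R-parabolic subgroups of $G$ automatically lie in $G^0$, which is what bridges the gap between $G(k)$-conjugacy and $G^0(k)$-conjugacy for limits along cocharacters.
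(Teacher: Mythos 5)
Your proof is correct and is essentially the same argument as the paper's: both directions rest on Corollary~\ref{cor:mainconjcocharclosed} and the observation that $R_u(P_\lambda)\subseteq G^0$. The only cosmetic difference is that you argue the nontrivial direction directly (assume $G(k)\cdot v$ cocharacter-closed, deduce $G^0(k)\cdot v$ cocharacter-closed), whereas the paper phrases it by contrapositive.
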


\begin{proof}
It is immediate from the definition that if $G^0(k)\cdot v$
is cocharacter-closed over $k$,
then $G(k)\cdot v$ is cocharacter-closed over $k$.
Conversely, suppose $G^0(k)\cdot v$ is not cocharacter-closed over $k$.  Then there exists $\lambda\in Y_k(G^0)= Y_k(G)$ such that $\lambda$ destabilizes $v$ and $\lim_{a\to 0}\lambda(a)\cdot v$ does not belong to $R_u(P_\lambda(G^0))(k)\cdot v= R_u(P_\lambda(G))(k)\cdot v$.  It follows from
Corollary \ref{cor:mainconjcocharclosed} that $G(k)\cdot v$ is not cocharacter-closed over $k$, as required.
\end{proof}

Here is our main result on Levi descent and split Levi ascent; it proves Theorem~\ref{thm:Gvkdef}(iii).

\begin{thm}
\label{thm:leviascentdescent}
Let $v \in V$.
Suppose $S$ is a $k$-defined torus of $G_v$ and set $L = C_G(S)$.
\begin{itemize}
\item[(i)] If $G(k)\cdot v$ is cocharacter-closed over $k$, then $L(k)\cdot v$ is cocharacter-closed over $k$.
\item[(ii)] If $S$ is $k$-split, then $G(k)\cdot v$ is cocharacter-closed over $k$ if and only if $L(k)\cdot v$ is cocharacter-closed over $k$.
\end{itemize}
\end{thm}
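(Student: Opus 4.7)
The plan is to prove (i) and (ii) by combining three earlier inputs from the excerpt: Corollary \ref{cor:mainconjcocharclosed}, which refines ``the limit of $v$ along $\lambda$ is $G(k)$-conjugate to $v$'' to ``$R_u(P_\lambda)(k)$-conjugate to $v$'' whenever $G(k)\cdot v$ is cocharacter-closed; Proposition \ref{prop:ruplevi}, which further descends $R_u(P_\lambda)(k)$-conjugacy to $R_u(P_\lambda(L))(k)$-conjugacy in the presence of a $k$-defined torus $S\subseteq G_v$ with $L=C_G(S)$ and $\lambda\in Y_k(L)$; and Proposition \ref{prop:technical}, which for a $k$-split torus $S\subseteq G_v$ converts any destabilizing cocharacter in $Y_k(G)$ into one in $Y_k(C_G(S))$ whose limit still escapes $G(k)\cdot v$.

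For part (i), I would pick any $\lambda\in Y_k(L)$ such that $v':=\lim_{a\to 0}\lambda(a)\cdot v$ exists. Regarding $\lambda$ as an element of $Y_k(G)$ and using the hypothesis that $G(k)\cdot v$ is cocharacter-closed over $k$, Corollary \ref{cor:mainconjcocharclosed} supplies some $u\in R_u(P_\lambda)(k)$ with $v'=u\cdot v$. Then Proposition \ref{prop:ruplevi}, applied with the given $k$-defined torus $S\subseteq G_v$, upgrades this to some $u_0\in R_u(P_\lambda(L))(k)\subseteq L(k)$ with $v'=u_0\cdot v$. Thus $v'\in L(k)\cdot v$, so $L(k)\cdot v$ is cocharacter-closed over $k$.

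For part (ii), one direction is a special case of (i) (which, crucially, does not require $k$-splitness of $S$). For the converse I would argue contrapositively: assume $G(k)\cdot v$ is not cocharacter-closed over $k$, so there exists $\lambda\in Y_k(G)$ with $v':=\lim_{a\to 0}\lambda(a)\cdot v$ existing but lying outside $G(k)\cdot v$. In particular $v'$ is not $R_u(P_\lambda)(k)$-conjugate to $v$. The $k$-splitness of $S$ now permits an appeal to Proposition \ref{prop:technical}, yielding $\tau\in Y_k(C_G(S))=Y_k(L)$ such that $\tilde v:=\lim_{a\to 0}\tau(a)\cdot v$ exists and lies outside $G(k)\cdot v$. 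Since $L(k)\cdot v\subseteq G(k)\cdot v$, the point $\tilde v$ also lies outside $L(k)\cdot v$, and $\tau\in Y_k(L)$ witnesses that $L(k)\cdot v$ is not cocharacter-closed over $k$.

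The main obstacle has effectively been absorbed into Propositions \ref{prop:technical} and \ref{prop:ruplevi}; the remaining work is bookkeeping. The hypothesis that $S$ be $k$-split in (ii) is genuinely used through Proposition \ref{prop:technical}, whose proof invokes Lemma \ref{lem:torustocochar} to produce a $k$-defined cocharacter $\mu\in Y_k(S)$ with $L_\mu=C_G(S)$ and $V^S=V^{\IM(\mu)}$; such an approximation is available only when $S$ is $k$-split. I therefore expect the ascent direction of (ii) to fail for tori that are merely $k$-defined, so no attempt should be made to avoid the splitness assumption in that half of the statement.
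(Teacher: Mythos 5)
Your proof is correct and follows essentially the same route as the paper's: part (i) combines Corollary \ref{cor:mainconjcocharclosed} with Proposition \ref{prop:ruplevi}, and the reverse direction of (ii) is proved contrapositively via Proposition \ref{prop:technical}, which is where the $k$-split hypothesis on $S$ is used. Your explicit observation that failure of $G(k)$-conjugacy implies failure of $R_u(P_\lambda)(k)$-conjugacy (needed to invoke Proposition \ref{prop:technical}) is a small but welcome clarification that the paper's proof leaves implicit.
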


\begin{proof}
(i). Suppose $G(k)\cdot v$ is cocharacter-closed over $k$, and let $\lambda \in Y_k(L)$
be such that $v':=\lim_{a\to 0}\lambda(a)\cdot v$ exists.
Then, by Corollary \ref{cor:mainconjcocharclosed}, $v'$ is $R_u(P_\lambda)(k)$-conjugate to $v$.
Now apply Proposition \ref{prop:ruplevi} to get that $v'$ is $R_u(P_\lambda(L))(k)$-conjugate---and hence $L(k)$-conjugate---to $v$.

(ii). The forward implication follows from part (i).
Now suppose that $G(k)\cdot v$ is not cocharacter-closed over $k$.
Then, by Proposition \ref{prop:technical}, there exists $\tau \in Y_k(L)$ such that $\lim_{a\to 0}\tau(a)\cdot v$ exists but is not
$G(k)$-conjugate to $v$.
But then this limit cannot be $L(k)$-conjugate to $v$, and hence $L(k)\cdot v$ is not cocharacter-closed over $k$.
\end{proof}

Our next result---which proves the second assertion of Theorem~\ref{thm:Gvkdef}(ii)---is Galois descent for separable algebraic extensions.

\begin{prop}
\label{prop:galois_descent}
Let $v\in V$ such that $G_v(k_s)$ is $\Gamma$-stable 
and let $k'/k$ be a separable algebraic extension.
If $G(k')\cdot v$ is cocharacter-closed over $k'$ then $G(k)\cdot v$ is cocharacter-closed over $k$.
\end{prop}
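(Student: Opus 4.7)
The plan is to combine Corollary \ref{cor:mainconjcocharclosed} (which upgrades cocharacter-closedness of an orbit to $R_u(P_\lambda)$-conjugacy of limits) with Theorem \ref{thm:rupgalois} (which descends $R_u(P_\lambda)(k_s)$-conjugacy to $R_u(P_\lambda)(k)$-conjugacy under the hypothesis that $G_v(k_s)$ is $\Gamma$-stable). The key observation is that separable algebraic extensions live inside $k_s$, so $k'$-conjugacy information automatically becomes $k_s$-conjugacy information.

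More precisely, I would start by taking an arbitrary $\lambda \in Y_k(G)$ such that $v' := \lim_{a\to 0}\lambda(a)\cdot v$ exists; the task is to show $v' \in G(k)\cdot v$. Since $Y_k(G) \subseteq Y_{k'}(G)$, the cocharacter $\lambda$ also destabilizes $v$ for the $G(k')$-action, and by hypothesis $G(k')\cdot v$ is cocharacter-closed over $k'$. Applying Corollary \ref{cor:mainconjcocharclosed} over the field $k'$, we conclude that $v'$ is $R_u(P_\lambda)(k')$-conjugate to $v$.

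Now because $k'/k$ is separable algebraic, $k' \subseteq k_s$, hence $R_u(P_\lambda)(k') \subseteq R_u(P_\lambda)(k_s)$ and the element of $R_u(P_\lambda)(k')$ conjugating $v$ to $v'$ is in particular an element of $R_u(P_\lambda)(k_s)$. At this point the hypothesis that $G_v(k_s)$ is $\Gamma$-stable allows us to invoke Theorem \ref{thm:rupgalois}, which yields an element of $R_u(P_\lambda)(k)$ conjugating $v$ to $v'$. In particular $v' \in G(k)\cdot v$, so $G(k)\cdot v$ is cocharacter-closed over $k$.

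There is essentially no obstacle here beyond the careful bookkeeping of which field each conjugating element lives in; the real work has already been done in Corollary \ref{cor:mainconjcocharclosed} (which removes the connectedness assumption) and Theorem \ref{thm:rupgalois} (the Galois-descent result for $R_u(P_\lambda)$-conjugacy). The only subtle point to flag is why the $\Gamma$-stability hypothesis on $G_v(k_s)$ is genuinely needed: it is precisely what fuels Theorem \ref{thm:rupgalois} and cannot be dropped in general, since without it Galois-cohomological obstructions can prevent $k_s$-conjugacy from descending to $k$-conjugacy.
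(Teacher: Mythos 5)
Your proof is correct and follows essentially the same route as the paper: apply Corollary \ref{cor:mainconjcocharclosed} over $k'$ to get $R_u(P_\lambda)(k')$-conjugacy, note that $k' \subseteq k_s$ to upgrade this to $R_u(P_\lambda)(k_s)$-conjugacy, and then invoke Theorem \ref{thm:rupgalois} (which is where the $\Gamma$-stability of $G_v(k_s)$ is used) to descend to $R_u(P_\lambda)(k)$-conjugacy. There is no meaningful difference from the argument given in the paper.
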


\begin{proof}
Suppose $G(k')$ is cocharacter-closed over $k'$.  Let $\lambda\in Y_k(G)$ such that $v':= \lim_{a\to 0} \lambda(a)\cdot v$ exists.
By Corollary \ref{cor:mainconjcocharclosed}, $v'$ is $R_u(P_\lambda)(k')$-conjugate to $v$.
In particular, $v'$ is $R_u(P_\lambda)(k_s)$-conjugate to $v$.
The result now follows from Theorem \ref{thm:rupgalois}.
\end{proof}

It turns out to be more subtle to prove converse statements
to Proposition \ref{prop:galois_descent}---i.e., Galois ascent results---under appropriate hypotheses on $v$.  Such a result would follow for $v\in V(k)$ if we could prove a rational version of the strengthened Hilbert-Mumford Theorem (cf.\ Remark~\ref{rem:no_optimality}): for then the optimal $k_s$-defined cocharacter $\lambda_{\rm opt}$ would be $\Gamma$-fixed and hence would be $k$-defined.  As a first lemma, we prove that $G(k)\cdot v$ is
not cocharacter-closed over $k$, provided the limit along
some $k_s$-defined cocharacter lies outside the geometric orbit.

\begin{lem}
\label{lem:geom_galois_ascent}
Let $v\in V(k)$.
Suppose there exists $\lambda\in Y_{k_s}(G)$
such that $\lambda$ properly destabilizes $v$ over ${\bar k}$.
Then $G(k)\cdot v$ is not cocharacter-closed over $k$.
\end{lem}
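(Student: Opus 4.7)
\emph{Plan.} The plan is to construct a $k$-defined cocharacter $\mu\in Y_k(G)$ that properly destabilizes $v$ over $k$, by Galois-averaging an $R_u(P_\lambda)(k_s)$-conjugate of $\lambda$ inside a common $k$-defined maximal torus of $G$.

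First, since $v\in V(k)$, both the stabilizer $G_v(k_s)$ and the geometric orbit $G\cdot v$ are $\Gamma$-stable: for $g\in G_v(k_s)$ and $\gamma\in\Gamma$ we have $\gamma(g)\cdot v=\gamma(g)\cdot\gamma(v)=\gamma(g\cdot v)=\gamma(v)=v$. In particular, if $v':=\lim_{a\to 0}\lambda(a)\cdot v$, then for every $\gamma\in\Gamma$ the cocharacter $\gamma\cdot\lambda\in Y_{k_s}(G)$ also properly destabilizes $v$ over $\bar k$, with limit $\gamma\cdot v'\notin G\cdot v$.

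Second, fix a finite Galois extension $k'/k$ inside $k_s$ over which $\lambda$ is defined, and set $\Gamma'=\Gal(k'/k)$. The intersection $Q:=\bigcap_{\gamma\in\Gamma'}P_{\gamma\cdot\lambda}$ is $\Gamma'$-stable and hence $k$-defined; since any finite family of parabolic subgroups of $G$ shares a common maximal torus of $G$, $Q$ contains a maximal torus of $G$, and by standard rationality I may pick a $k$-defined maximal torus $T$ of $G$ lying inside $Q$. Using the transitivity of $R_u(P_\lambda)(k_s)$ on $k_s$-defined maximal tori of $P_\lambda$, I would choose $u_1\in R_u(P_\lambda)(k_s)$ with $\tilde\lambda:=u_1\cdot\lambda\in Y_{k_s}(T)$; a direct computation, exactly as in the analysis preceding Proposition \ref{prop:technical}, yields $\lim_{a\to 0}\tilde\lambda(a)\cdot v=u_1\cdot v'\notin G\cdot v$. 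Setting $u_\gamma:=\gamma(u_1)$ gives $\gamma\cdot\tilde\lambda=u_\gamma\cdot(\gamma\cdot\lambda)\in Y_{k_s}(T)$ for each $\gamma\in\Gamma'$.

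Third, form the Galois sum
\[
\mu := \sum_{\gamma\in\Gamma'}\gamma\cdot\tilde\lambda \;\in\; Y(T)^{\Gamma'} \;=\; Y_k(T) \;\subseteq\; Y_k(G).
\]
Since all the cocharacters $\gamma\cdot\tilde\lambda$ commute (being in $T$) and each individually admits a limit on $v$, iterated application of Lemma \ref{lem:twoways} shows that $\lim_{a\to 0}\mu(a)\cdot v$ exists and equals the iterated limit along the commuting family. Embedding $V$ equivariantly in a $k$-defined rational $G$-module (Remark \ref{rem:linear}) and decomposing into $T$-weights, this iterated limit is the projection of $v$ onto the common zero-weight subspace of the family, which is a proper projection because the single step along $\tilde\lambda$ already carries $v$ to $u_1\cdot v'\notin G\cdot v$.

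The main obstacle is the final verification that $\lim_{a\to 0}\mu(a)\cdot v$ lies outside $G(k)\cdot v$: one must rule out that Galois-summing collapses $\mu$ to zero or causes the iterated limit to re-enter $G(k)\cdot v$. I would close this gap by combining the weight-space analysis above with the uniqueness statement of the rational Hilbert--Mumford Theorem (Theorem \ref{thm:uniqueaccessible}(ii)) applied to $\ccc{G(k)\cdot v}$, together with Proposition \ref{prop:galois_descent} (whose hypothesis $G_v(k_s)$ being $\Gamma$-stable has already been verified); the latter forces any $G(k)$-conjugate obstruction to propagate to a $G(k_s)$-conjugacy, contradicting $u_1\cdot v'\notin G\cdot v$.
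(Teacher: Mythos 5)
Your plan rests on two steps that do not hold up, and it takes a genuinely different route from the paper, which uses the uniform $S$-instability machinery from \cite[\S4]{GIT}.

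First, your Step~2 relies on the claim that any finite family of parabolic subgroups of $G$ shares a common maximal torus of $G$. This is false already for $G=\SL_2$: three pairwise distinct Borel subgroups intersect in the centre, which contains no maximal torus. So there is no reason for $Q=\bigcap_{\gamma\in\Gamma'}P_{\gamma\cdot\lambda}$ to contain a maximal torus of $G$, let alone a $k$-defined one, and the construction of the common torus $T$ (and hence of $\tilde\lambda$ and of the Galois average $\mu$) collapses at this point. Second, even granting the construction, the crucial final assertion---that $v'':=\lim_{a\to 0}\mu(a)\cdot v$ lies outside $G(k)\cdot v$---is exactly the point you flag as ``the main obstacle,'' and the proposed repair does not close it. Your weight-space argument only shows $v''\ne v$ (because $\mu$ does not fix $v$), not that $v''\notin G(k)\cdot v$, and the appeal to Proposition~\ref{prop:galois_descent} points in the wrong direction: that proposition gives \emph{descent} (closed over $k'$ implies closed over $k$), whereas here you would need an \emph{ascent} statement, which is precisely the thing Lemma~\ref{lem:geom_galois_ascent} is a step towards establishing; invoking it would be circular.

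By contrast, the paper sidesteps Galois averaging of cocharacters entirely. It sets $v'=\lim_{a\to 0}\lambda(a)\cdot v\in V(k_s)$, takes the finite $\Gamma$-orbit $Y$ of $v'$, and forms the closed, $G$-stable, $k$-defined set $S=\bigcup_{y\in Y}\overline{G\cdot y}$. Because $\lambda$ properly destabilizes $v$, $\dim G\cdot v' < \dim G\cdot v$, so every $G$-orbit in $S$ has dimension strictly less than $\dim G\cdot v$; hence $S\cap G\cdot v=\emptyset$, and in particular $S\cap G(k)\cdot v=\emptyset$. Now $v$ is uniformly $S$-unstable over $k_s$, and \cite[Cor.~4.9]{GIT} descends this to $k$, yielding $\mu\in Y_k(G)$ with $\lim_{a\to 0}\mu(a)\cdot v\in S$, hence outside $G(k)\cdot v$. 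The key technical input you would need to emulate this---a descent result for instability along cocharacters---is exactly what your proposal lacks a substitute for.
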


\begin{proof}
Let $v' = \lim_{a\to 0} \lambda(a) \cdot v \in V(k_s)$ and let
$Y \subseteq V(k_s)$ be the set of Galois-conjugates of $v'$; note that $Y$ is finite.
Set $S = \bigcup_{y\in Y} \overline{G\cdot y}$.
Since $Y$ is finite, $S$ is closed.
This closed set is also $G$-stable, $\Gamma$-stable and $k_s$-defined, hence $k$-defined.
Note also that since $\lambda$ properly destabilises $v$ to $v'$, the dimension of the $G$-orbit of $v'$ is strictly smaller than that of $v$, and hence all $G$-orbits in $S$ have dimension strictly smaller than that of $G\cdot v$.
This implies that $S \cap G\cdot v  = \emptyset$;
in particular, $S \cap G(k)\cdot v = \emptyset$.
Now, using the terminology of \cite[\S4]{GIT}, $v$ is uniformly $S$-unstable over $k_s$, so
we may thus apply \cite[Cor.\ 4.9]{GIT} to conclude that $v$ is uniformly $S$-unstable over $k$ (note that $\{v\}$ is $k$-closed).
In other words, there exists a $k$-defined cocharacter $\mu$ with
$\lim_{a\to 0} \mu(a) \cdot v \in S$.
The claim follows.
\end{proof}

We get stronger ascent and descent results under some 
assumptions of $k$-definability on $G_v$.
In general, the stabilizer $G_v$ is only $k_i$-defined but not $k$-defined.
If $G_v$ is $k$-defined, then we can say more.
In particular, for $v \in V(k)$, the stabilizer $G_v$ is
$k$-defined if $G\cdot v$ is separable or if $k$ is perfect,
see Proposition \ref{prop:perfectorseparable}.  The following result proves Theorem \ref{thm:Gvkdef}(ii) (second assertion) and (iii).

\begin{thm}
\label{thm:descent+ascent}
Let $v\in V$.
Suppose $G_v$ has a maximal torus that is $k$-defined.
Then the following hold.
\begin{itemize}
\item[(i)] Suppose $v\in V(k)$.  For any separable algebraic extension $k'/k$, $G(k')\cdot v$ is cocharacter-closed over $k'$ if and only if $G(k)\cdot v$ is cocharacter-closed over $k$.
\item[(ii)] Let $S$ be a $k$-defined torus of $G_v$ and let $L = C_G(S)$.  Then $L(k)\cdot v$ is cocharacter-closed over $k$ if and only if $G(k)\cdot v$ is cocharacter-closed over $k$.
\end{itemize}
\end{thm}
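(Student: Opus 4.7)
The plan is to combine Theorem \ref{thm:leviascentdescent} and Proposition \ref{prop:galois_descent} with a reduction to the split/anisotropic decomposition of $S$ and, for Galois ascent, a rationality argument based on the technical Theorem \ref{thm:k-point} (postponed to Section \ref{sec:k-point}).

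For part (ii), the implication ``$G(k)\cdot v$ cocharacter-closed implies $L(k)\cdot v$ cocharacter-closed'' is Theorem \ref{thm:leviascentdescent}(i) and requires no extra hypothesis. For the converse, I would write $S = S_s\cdot S_a$ as an almost-direct product of its maximal $k$-split subtorus $S_s$ and a maximal $k$-anisotropic subtorus $S_a$, both $k$-defined, and set $M := C_G(S_s)\supseteq L$. By Theorem \ref{thm:leviascentdescent}(ii) applied to the $k$-split torus $S_s\subseteq G_v$, showing $G(k)\cdot v$ is cocharacter-closed reduces to showing $M(k)\cdot v$ is cocharacter-closed. Since a $k$-defined maximal torus of $G_v$ that contains $S_a$ (arranged by conjugation) restricts to a $k$-defined maximal torus of $M_v$, I may pass to $M$ and assume from the outset that $S$ is $k$-anisotropic. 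In that reduced setting, the content of Theorem \ref{thm:k-point} should supply the needed rationality statement: any $k$-defined cocharacter destabilising $v$ can be replaced, up to $R_u(P_\lambda)(k)$-conjugation, by one lying in $Y_k(L)$, so cocharacter-closedness of $L(k)\cdot v$ transfers to $M(k)\cdot v$ and hence to $G(k)\cdot v$.

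For part (i), the descent direction is immediate: when $v\in V(k)$ the set $G_v(k_s)$ is $\Gamma$-stable (since $v$ is $\Gamma$-fixed), so Proposition \ref{prop:galois_descent} applies directly. For ascent, I would argue by contradiction. Suppose $G(k')\cdot v$ is not cocharacter-closed over $k'$, so there exists $\mu\in Y_{k'}(G)$ with $v'' := \lim_{a\to 0}\mu(a)\cdot v$ existing and lying outside $G(k')\cdot v$. After enlarging $k'$ to a finite separable extension of $k$ and using Lemma \ref{lem:kpoint} to transfer $v$ to a tuple $\tuple{v}\in V^n(k)$ on which $G$ acts diagonally (this transfer preserves destabilisation and stabilisers by parts (i)--(iii) of that lemma), the problem becomes one of finding a $k$-defined proper destabiliser for $\tuple{v}$. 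Theorem \ref{thm:k-point}, whose hypotheses are tailored to this situation, should then yield a genuinely $k$-defined cocharacter properly destabilising $\tuple{v}$, hence by Lemma \ref{lem:kpoint}(ii) one properly destabilising $v$ over $k$. This contradicts the cocharacter-closedness of $G(k)\cdot v$.

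The main obstacle in both parts is the need to replace a ``wrong-field'' destabilising cocharacter---either $k'$-defined in part (i), or in $Y_k(G)\setminus Y_k(L)$ in part (ii)---by one defined over the smaller field or group. This is precisely what the deferred Theorem \ref{thm:k-point} is set up to provide, and the hypothesis that $G_v$ carries a $k$-defined maximal torus is what allows the required Galois-equivariant/Levi-equivariant conjugation of cocharacters into the correct subtorus. Without such a result, the rational strengthening of the Hilbert--Mumford Theorem (with optimal cocharacters defined over $k$) would be the natural tool, but as noted in Remark \ref{rem:no_optimality} this is not known in the generality required here; Theorem \ref{thm:k-point} is the substitute that makes the ascent arguments go through.
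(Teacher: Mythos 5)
Your proposal misidentifies the role of Theorem \ref{thm:k-point} and, in so doing, leaves the hard direction of part (i) unproved. Theorem \ref{thm:k-point} is not a cocharacter-conjugation result; it constructs a new affine $G$-variety $W$ and a $k$-\emph{point} $w\in W(k)$ such that cocharacter-closedness of $M(k)\cdot v$ transfers to that of $M(k)\cdot w$ and such that $G_w$ inherits the torus hypotheses from $G_v$. It does not ``replace any $k$-defined cocharacter destabilising $v$ by one in $Y_k(L)$ up to $R_u(P_\lambda)(k)$-conjugation,'' nor does it ``yield a genuinely $k$-defined cocharacter properly destabilising $\tuple{v}$.'' In the paper its sole purpose is the reduction in part (ii) from a geometric point $v$ (over a possibly finite base field, where Lemma \ref{lem:kpoint} is unavailable) to a rational point $w\in W(k)$, after which one invokes part (i) to pass to $k=k_s$ and finishes with Theorem \ref{thm:leviascentdescent}(ii) because $S$ is split over $k_s$. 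Your alternative route for (ii) via the split/anisotropic decomposition $S=S_s\cdot S_a$ still faces the central obstruction: you must pass from cocharacter-closedness of $L(k)\cdot v$ with $L=C_M(S_a)$, $S_a$ $k$-anisotropic, to cocharacter-closedness of $M(k)\cdot v$, which is exactly the case Theorem \ref{thm:leviascentdescent}(ii) does not cover. You have not supplied an argument for that step.

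The more serious omission is in part (i), whose ascent direction is the genuinely novel content of the theorem. The paper proves it by induction on $\dim G$, splitting into two cases according to whether the offending $\lambda\in Y_{k_s}(G)$ properly destabilises $v$ over $\overline{k}$ or not. In the first case, Lemma \ref{lem:geom_galois_ascent} (which encodes the uniform $S$-instability machinery of \cite[\S4]{GIT}) produces a $k$-defined destabiliser directly. In the second case, Theorem \ref{thm:Ruconj} over $\overline{k}$ shows some $u\cdot\lambda$ fixes $v$; then the hypothesis that $G_v$ has a $k$-defined maximal torus $S'$ gives a strictly smaller reductive $k$-group $G'=C_G(S')$ (because $S'\not\subseteq Z(G^0)$), to which one applies Theorem \ref{thm:leviascentdescent}(ii) over $k_s$ and the induction hypothesis, concluding via Lemma \ref{lem:max_tor_ascent}. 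None of this appears in your proposal: you invoke Lemma \ref{lem:kpoint}---which is vacuous here since $v$ is already a $k$-point---and then hand the entire burden to Theorem \ref{thm:k-point}, which cannot carry it. Without the induction, the dichotomy, and Lemmas \ref{lem:geom_galois_ascent} and \ref{lem:max_tor_ascent}, the ascent direction of (i) is simply not established, and since (ii) relies on (i), your argument for (ii) inherits the gap.
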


\begin{proof}
 The forward implication of (i) follows from Proposition \ref{prop:galois_descent}.
We prove the reverse implication of (i) using induction on $\dim G$.  By Remark \ref{rem:linear},
we may assume that $V$ is a $k$-defined rational $G$-module.
The result holds trivially if $\dim G= 0$.
Assume the result holds for any $G'$ such that $\dim G'< \dim G$.
Let $k'/k$ be a separable algebraic extension.  Suppose $G(k')\cdot v$ is not cocharacter-closed over $k'$.  We prove that $G(k)\cdot v$ is not cocharacter-closed over $k$.  By Proposition~\ref{prop:galois_descent}, we can assume that $k'= k_s$.  So some $\lambda\in Y_{k_s}(G)$ properly destabilizes $v$ over $k_s$.  If $\lambda$ properly destabilizes $v$ over $\bar{k}$,
then $G(k)\cdot v$ is not cocharacter-closed over
$k$, by Lemma~\ref{lem:geom_galois_ascent}.

So suppose $\lambda$ does not properly destabilize $v$ over ${\bar k}$.
Then $u\cdot \lambda$ centralizes $v$ for some $u\in R_u(P_\lambda)$, 
by Theorem \ref{thm:Ruconj} for  ${\bar k}$.  By hypothesis, there exists a
$k$-defined maximal torus $S'$ of $G_v$.  Set $G'= C_G(S')$,
a reductive $k$-defined subgroup of $G$.  Since $\lambda$ properly destabilizes $v$ over $k_s$,
$u\cdot\lambda\neq \lambda$, which implies that $\IM (u\cdot \lambda)$
is not contained in $Z(G^0)$.  It follows that $S'\not\subseteq Z(G^0)$, so $\dim G'< \dim G$.
By Theorem \ref{thm:leviascentdescent}(ii)
applied to the $k_s$-split torus $S'$,
$G'(k_s)\cdot v$ is not cocharacter-closed over $k_s$.
By the induction hypothesis, $G'(k)\cdot v$ is not cocharacter-closed over $k$.
So choose $\mu\in Y_k(G')$ such that $v':= \lim_{a\to 0} \mu(a)\cdot v$ exists
and is not $G'(k)$-conjugate to $v$.  Then in particular, $\mu$
does not fix $v$.  Hence $\mu$ properly destabilizes $v$ over $k$
for $G$, by Lemma~\ref{lem:max_tor_ascent}, and we are done.

Finally, we prove part (ii).
We want to apply (i) to both $G$ and $L$, so we first check
that $L_v$ has a maximal torus that is $k$-defined.
By assumption, $G_v$ has a maximal torus $T$ that is $k$-defined.
Let $H= G_v$ and let $\widetilde{H}$ be as in the proof of Lemma~\ref{lem:k-rank}; then $S\subseteq \widetilde{H}$ and $T$ is a maximal torus of $\widetilde{H}$.   As $S$ and $\widetilde{H}$ are $k$-defined, we can choose a $k$-defined maximal torus $T'$ of $\widetilde{H}$ containing $S$.  Then $\dim T'= \dim T$, so $T'$ is a maximal torus of both $G_v$ and $L_v$, as required.

If $v\in V(k)$ then the result follows from Theorem~\ref{thm:leviascentdescent}.  More generally, if $k$ is infinite then we can first apply Lemma~\ref{lem:kpoint} and replace an arbitrary $v$ with some $\tuple{v}\in V(k)^n$ (note that $S\subseteq G_{\tuple{v}}$ as $S(k)$ is dense in $S$).  For arbitrary $k$ and $v$ we need the following argument, which relies on constructions from Section~\ref{sec:k-point}.  We use Theorem~\ref{thm:k-point} below to replace $V$ with another $k$-defined $G$-variety $W$ and $v$ with some $w \in W(k)$.
By Theorem \ref{thm:k-point}(i), $L(k) \cdot v$ is 
cocharacter-closed if and only if $L(k) \cdot w$ is so,
and likewise for $G(k) \cdot v$. Moreover,
Theorem \ref{thm:k-point}(ii) assures that $S \subseteq G_w$
and that $G_w$ has a maximal torus that is $k$-defined.
We may thus apply Theorem~\ref{thm:descent+ascent}(i) and assume $k=k_s$. But then
$S$ is $k$-split, so the result follows from
Theorem \ref{thm:leviascentdescent}.
\end{proof}

\begin{cor}
\label{cor:birkes1_plus}
Let $v \in V$.
If $G_v$ contains a maximal $k$-torus of $G$, then $G(k) \cdot v$ is cocharacter-closed over $k$.
\end{cor}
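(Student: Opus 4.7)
My plan is to reduce the corollary to the Levi descent half of Theorem~\ref{thm:descent+ascent}(ii), applied with $S$ equal to the maximal $k$-torus $T \subseteq G_v$ of $G$ itself. The reason this is natural is that, in contrast to the setup of Corollary~\ref{cor:birkes1}, the hypothesis here only provides a maximal $k$-torus of $G$ inside $G_v$, not a maximal $k$-split torus of $G$; consequently one cannot invoke Proposition~\ref{prop:technical} directly, and one needs a mechanism to pass to a smaller subgroup where cocharacter-closedness becomes automatic. Levi descent with $S=T$ is precisely that mechanism.

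First I would verify that the standing hypothesis of Theorem~\ref{thm:descent+ascent}(ii)---that $G_v$ has a maximal torus that is $k$-defined---is satisfied here. This is a straightforward rank comparison: any torus of $G_v$ containing $T$ is a torus of $G$ containing the maximal torus $T$, hence equals $T$. Therefore $T$ is itself a maximal torus of $G_v$, and it is $k$-defined by assumption.

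Next I would apply Theorem~\ref{thm:descent+ascent}(ii) with the $k$-defined torus $S := T$ of $G_v$ and set $L := C_G(T)$. This reduces the problem to proving that $L(k) \cdot v$ is cocharacter-closed over $k$. Here I would invoke the standard fact that a maximal torus of a connected reductive group is self-centralising, which gives $L^0 = C_{G^0}(T) = T$. Consequently $Y_k(L) = Y_k(L^0) = Y_k(T)$, and every cocharacter $\lambda \in Y_k(L)$ has image contained in $T \subseteq G_v$ and thus fixes $v$. In particular, whenever $\lambda \in Y_k(L)$ and $\lim_{a \to 0} \lambda(a) \cdot v$ exists, the limit equals $v$, which lies in $L(k) \cdot v$ trivially. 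Hence $L(k) \cdot v$ is cocharacter-closed over $k$, and Theorem~\ref{thm:descent+ascent}(ii) yields the conclusion.

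There is no substantial obstacle here; the only real conceptual step is to feed the full torus $T$---rather than any maximal $k$-split subtorus of $T$---into the Levi descent half of Theorem~\ref{thm:descent+ascent}(ii), so that the resulting Levi subgroup $L = C_G(T)$ is small enough (with $L^0 = T$) for cocharacter-closedness of $L(k)\cdot v$ to be trivial.
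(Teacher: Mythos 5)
Your proof is correct, and it takes a genuinely different route from the paper's. The paper's proof first applies Theorem~\ref{thm:descent+ascent}(i) (Galois ascent/descent) to reduce to the case $k=k_s$, where the torus $S\subseteq G_v$ becomes $k_s$-split, and then invokes Corollary~\ref{cor:birkes1}, which rests on the technical Proposition~\ref{prop:technical}. You instead apply the Levi ascent half of Theorem~\ref{thm:descent+ascent}(ii) with $S$ taken to be the full maximal $k$-torus $T$, observing that $L=C_G(T)$ has $L^0=T$ by self-centralisation, so that $Y_k(L)=Y_k(T)\subseteq Y_k(G_v)$ and every $k$-defined cocharacter of $L$ fixes $v$; the cocharacter-closedness of $L(k)\cdot v$ is then immediate, with no need for Proposition~\ref{prop:technical} at this stage. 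Both routes ultimately pass through the same machinery (the proof of Theorem~\ref{thm:descent+ascent}(ii) itself calls part (i) and Theorem~\ref{thm:leviascentdescent}), so neither is strictly more elementary, but your version has the pedagogical advantage that the reduced statement---cocharacter-closedness for the group $L$ with $L^0=T$ a torus inside $G_v$---is trivially true, whereas the paper's reduced statement still requires an auxiliary corollary. The one small point worth spelling out is the identification $Y_k(L)=Y_k(L^0)$, which holds because cocharacters have connected image; you use this implicitly when passing from $L$ to $T$.
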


\begin{proof}
 Suppose $G_v$ contains a maximal $k$-torus $S$ of $G$.  Then $S$ is a $k$-defined maximal torus of $G$, so $S$ is a $k$-defined maximal torus of $G_v$.  By Theorem~\ref{thm:descent+ascent}(i), it is enough to show that $G(k_s) \cdot v$ is cocharacter-closed over $k_s$.  But this follows from Corollary~\ref{cor:birkes1}, as $S$ splits over $k_s$.
\end{proof}

We turn now to the proof of Theorem \ref{thm:anisotropy}(i). 

\begin{proof}[Proof of Theorem \ref{thm:anisotropy}(i)]
Suppose $G$ is $k$-anisotropic. 
Let $W$ be an affine $k$-defined $G$-variety and let $w \in W$.
Since $Y_k(G) = \{0\}$,  the 
$G(k)$-orbit of $w$ in $W$ is cocharacter-closed over $k$.
(Note that we do not require that $w \in W(k)$ here.)
The converse follows from the argument in the proof of 
\cite[Lem.\ 10.1]{Birkes}.
\end{proof}

\begin{rems}
\label{rem:anisotropy}
(i). We recover the result (cf.~\cite[Cor.\ 3.8]{boreltits})
that if $k$ is perfect and $G$ is $k$-anisotropic,
then every element in $G(k)$ is semisimple.
 For if $g \in G(k)$ is not semisimple, 
then its $G(\ovl{k})$-conjugacy class in $G$ is not 
closed, contradicting Theorem \ref{thm:anisotropy}(ii).

(ii). 
Theorem \ref{thm:anisotropy}(ii) fails for 
non-perfect fields.
In \cite[p.\ 488]{GilleQueguiner-Mathieu}, Gille and Qu\'eguiner-Mathieu
give an example of a $k$-anisotropic semisimple 
group $G$ of the form $G = \PGL_1(A)$, 
where $A$ is 
a simple central division algebra over $k$
containing a field $K$ such that $K/k$ is purely inseparable.
Moreover, $G$ contains a smooth unipotent subgroup
admitting non-trivial $k$-points.
If $1 \neq g \in G(k)$ is unipotent, then its $G(\ovl{k})$-conjugacy class
is not closed. So the conclusion of Theorem \ref{thm:anisotropy}(ii) does not
hold in this instance.
\end{rems}

We finish with the following result, which is an easy consequence of 
Corollary \ref{cor:birkes1} and Theorem \ref{thm:Gvkdef}(ii);
it shows that \cite[Property (B)]{Birkes} holds for any perfect field
(cf.~\cite[Thm.\ 7.2, Thm.\ 8.2]{Birkes}).

\begin{prop}
\label{prop:birkes}
Suppose $k$ is perfect. Let $v \in V$.
If $G_v$ contains a maximal $k$-split torus of $G$, then $G \cdot v$ is closed.
\end{prop}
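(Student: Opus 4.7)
The plan is to chain Corollary \ref{cor:birkes1} with the Galois ascent supplied by Theorem \ref{thm:Gvkdef}(ii), then convert cocharacter-closedness over $\ovl k$ to Zariski-closedness using the classical Hilbert-Mumford Theorem. Concretely, Corollary \ref{cor:birkes1} applied to the hypothesis immediately gives that $G(k) \cdot v$ is cocharacter-closed over $k$. Since $k$ is perfect, $\ovl k = k_s$, so $\ovl k / k$ is a separable algebraic extension, and the converse direction of Theorem \ref{thm:Gvkdef}(ii) (applied with $k' = \ovl k$) upgrades this to cocharacter-closedness of $G(\ovl k) \cdot v = G \cdot v$ over $\ovl k$. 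The latter is equivalent, for $G(\ovl k)$-orbits on the affine $\ovl{k}$-variety $V$, to Zariski-closedness by the Hilbert-Mumford Theorem recalled in the Introduction, so $G \cdot v$ is closed.

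The hypotheses of Theorem \ref{thm:Gvkdef}(ii) require $G_v$ to be $k$-defined and, for the converse direction, that $v \in V(k)$. When $v \in V(k)$, perfectness of $k$ together with Proposition \ref{prop:perfectorseparable} automatically yields the former, so the argument above goes through verbatim. For a general $v \in V$ the reduction to a $k$-point is the only subtlety: it is handled exactly as in the proof of Theorem \ref{thm:descent+ascent}(ii), by invoking Theorem \ref{thm:k-point} (or, when available, Lemma \ref{lem:kpoint} together with density of $S(k)$ in the $k$-split torus $S$) to replace $V$ with a $k$-defined $G$-variety $W$ and $v$ with a suitable $w \in W(k)$ in such a way that cocharacter-closedness over $k$ transfers faithfully in both directions, $S \subseteq G_w$, and $G_w$ acquires a $k$-defined maximal torus. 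This reduction step is the main (but minor) technical obstacle; once in place, everything else is a direct chaining of Corollary \ref{cor:birkes1}, the ascent half of Theorem \ref{thm:Gvkdef}(ii), and Hilbert-Mumford.
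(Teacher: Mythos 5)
Your first paragraph is exactly the paper's intended argument: Corollary~\ref{cor:birkes1} gives cocharacter-closedness of $G(k)\cdot v$ over $k$, the ascent half of Theorem~\ref{thm:Gvkdef}(ii) with $k'=\ovl k=k_s$ upgrades this to cocharacter-closedness of $G\cdot v$ over $\ovl k$, and the Hilbert--Mumford equivalence recalled after Definition~\ref{defn:cocharclosure} converts that to Zariski-closedness. You are also right that Proposition~\ref{prop:perfectorseparable}(i) supplies the required $k$-definedness of $G_v$ once $v\in V(k)$.

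The second paragraph does not work, and the obstruction is not merely technical. Theorem~\ref{thm:k-point} transfers \emph{cocharacter-closedness over $k$} between $v$ and $w$, and transfers containment of connected $k$-defined subgroups in the stabilizers, but it asserts nothing about cocharacter-closedness over $\ovl k$, i.e.\ about Zariski-closedness of the geometric orbits. So after producing $w\in W(k)$ and concluding that $G\cdot w$ is closed, you have no tool to get back to $G\cdot v$; and in fact that step cannot be filled, because the statement for arbitrary $v\in V$ fails. Take $k$ perfect and $G$ $k$-anisotropic (e.g.\ $\SO_3$ over $\RR$); the maximal $k$-split torus of $G$ is trivial, so every $v\in V$ satisfies the hypothesis, yet for $V=\gg$ and $v$ a non-rational nilpotent element the orbit $G\cdot v$ is not closed. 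Compare Theorem~\ref{thm:anisotropy}(ii), which is stated only for $w\in W(k)$, and the Birkes references the paper cites, which concern rational points. The proposition is therefore meant to be read with $v\in V(k)$; under that hypothesis your first paragraph is the complete proof and the reduction in the second is unnecessary---and as written it does not go through.
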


\begin{rem}
 Proposition~\ref{prop:birkes} fails for non-perfect fields: Remark~\ref{rem:anisotropy}(ii) gives an example of this with $G$ anisotropic.  The assertion of Proposition~\ref{prop:birkes} does hold for any $k$, however, if we assume that $G$ is split: for if $G_v$ contains a maximal $k$-split torus $S$ of $G$ then $S$ is a maximal torus of $G$, so $G\cdot v$ is closed by Theorem~\ref{thm:descent+ascent}(ii) applied to the field $\ovl{k}$.
\end{rem}

\section{Reduction to $k$-points}
\label{sec:k-point}

In this section we prove the following result which makes it possible
to pass from a geometric point in $V$ to a rational point in another
affine variety $W$.

\begin{thm}
\label{thm:k-point}
Let $v \in V$. Then there exists an affine $G$-variety $W$ and $w \in W(k)$
with the following properties:
\begin{itemize}
\item[(i)] For any reductive $k$-defined subgroup $M$ of $G$,
$M(k) \cdot v$ is cocharacter-closed over $k$ if and only
if $M(k) \cdot w$ is cocharacter-closed over $k$.
\item[(ii)] Let $H \subseteq G$ be a connected $k$-defined subgroup.
Then $H \subseteq G_v$ if and only if $H \subseteq G_w$.
\end{itemize}
\end{thm}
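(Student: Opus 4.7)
My plan is to construct $W$ as a self-product of an ambient rational $G$-module containing $V$, and $w$ as the tuple of $k$-rational ``coordinates'' of $v$ with respect to a basis of a splitting field. First, by Kempf's embedding lemma (as cited in Remark~\ref{rem:linear}), I would embed $V$ as a closed $G$-stable $k$-defined subvariety of a $k$-defined rational $G$-module $V_0$. Since this embedding is closed, $G$-equivariant and $k$-defined, limits along $k$-defined cocharacters and stabilizers of $v$ computed in $V$ agree with those computed in $V_0$, so without loss of generality I would assume $V = V_0$ is a rational $G$-module. Since $v \in V(\ovl{k})$, it lies in $V(k_1)$ for some finite extension $k_1/k$. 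Fixing a $k$-basis $\alpha_1, \dots, \alpha_n$ of $k_1$, Lemma~\ref{lem:kpoint} produces unique $v_i \in V(k)$ with $v = \sum_i \alpha_i v_i$. I would then set $W := V^n$ (with the diagonal $G$-action) and $w := \tuple{v} = (v_1,\dots,v_n) \in W(k)$.

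Part (i) should follow directly from Lemma~\ref{lem:kpoint}, applied with any reductive $k$-defined subgroup $M \subseteq G$ in place of $G$: its proof uses only cocharacters of the acting group and rationality of the action, so goes through verbatim for $M$. Explicitly, for $\lambda \in Y_k(M)$ the limits $v' = \lim_{a\to 0} \lambda(a)\cdot v$ and $w' = \lim_{a\to 0} \lambda(a)\cdot w$ exist simultaneously, and when they exist, $v' = g\cdot v$ iff $w' = g\cdot w$ for any $g \in M(k)$. Combining these equivalences gives (i). The easy direction of (ii), namely $H \subseteq G_w \Rightarrow H \subseteq G_v$, is immediate from the inclusion $G_w = G_{\tuple{v}} \subseteq G_v$ provided by Lemma~\ref{lem:kpoint}(iii).

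The main obstacle will be the converse in (ii): given a $k$-defined subgroup $H \subseteq G_v$, I need to show that $H$ fixes each $v_i$. My plan is to use the $H$-fixed subspace $V^H \subseteq V$. The crucial claim is that $V^H$ is a $k$-defined linear subspace of $V$: since $H$ is $k$-defined and the $G$-action on $V$ is $k$-linear, with respect to any $k$-basis of $V$ the condition of being $H$-fixed is cut out by $k$-linear equations, arising from the $k$-defined matrix coefficients of the representation $\rho \colon H \to \GL(V)$. Consequently $V^H(k_1) = V^H(k) \otimes_k k_1$. Since $H \subseteq G_v$ implies $v \in V^H \cap V(k_1) = V^H(k_1)$, there exist $v_i' \in V^H(k) \subseteq V(k)$ with $v = \sum_i \alpha_i v_i'$. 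The uniqueness of the decomposition of $v$ in $V(k) \otimes_k k_1$ asserted in Lemma~\ref{lem:kpoint} then forces $v_i' = v_i$ for every $i$, so each $v_i$ lies in $V^H$. Hence $H \subseteq G_{v_i}$ for all $i$, giving $H \subseteq G_{\tuple{v}} = G_w$, as required.
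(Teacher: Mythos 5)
Your proof is correct, but it follows a genuinely different and in fact more direct route than the paper's.  The paper first reduces to $v\in V(k_s)$ (by applying Lemma~\ref{lem:kpoint} with base field $k_s$), then forms the Galois-orbit tuple $(v,\gamma_2(v),\dots,\gamma_r(v))\in V^r(k_s)$, and finally passes to the quotient $W=V^r/S_r$ via Lemma~\ref{lem:finquot} to obtain an honest $k$-point.  You instead apply Lemma~\ref{lem:kpoint} directly with base field $k$ and a finite extension $k_1/k$ containing the coordinates of $v$, taking $W=V^n$ and $w=\tuple{v}\in V^n(k)$ in one step.  The price you pay is that Lemma~\ref{lem:kpoint}(iii) only gives $G_{\tuple{v}}(k)=G_v(k)$, which is not enough for the hard direction of (ii); your fix is the nice observation that for $k$-defined $H$ the fixed-point subspace $V^H$ is a $k$-defined linear subspace (by Galois descent and density of $H(k_s)$), so $V^H(k_1)=V^H(k)\otimes_k k_1$ and the uniqueness of the coefficients $v_i$ forces each $v_i\in V^H(k)$.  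This completely avoids both the intermediate $k_s$-point step and the finite-quotient construction, and as a small bonus your argument never uses connectedness of $H$ (the paper needs it in Lemma~\ref{lem:finquot}(iii)), so in fact you establish a marginally stronger version of part~(ii).  The only cosmetic inaccuracy is your phrase ``Lemma~\ref{lem:kpoint}, applied with $M$ in place of $G$'': there is nothing to reapply, since $Y_k(M)\subseteq Y_k(G)$ and $M(k)\subseteq G(k)$, so the statements for $G$ already cover what you need for $M$.
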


To prove Theorem \ref{thm:k-point} we require a series of lemmas.
We first reduce from geometric points to $k_s$-points. 

\begin{lem}
\label{lem:k_s-point}
 To prove Theorem \ref{thm:k-point}, we may assume $v \in V(k_s)$.
\end{lem}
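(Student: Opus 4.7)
The plan is to show: given an arbitrary $v \in V$, I can produce an auxiliary affine $G$-variety $W'$ and a $k_s$-point $w' \in W'(k_s)$ that already witness both conclusions of Theorem \ref{thm:k-point} for $v$. Assuming the theorem for $k_s$-points, I can then apply it to the pair $(W', w')$ to produce the required $(W, w)$ with $w \in W(k)$, and composing the two equivalences finishes the argument.

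For the construction I will first embed $V$ $G$-equivariantly as a closed $k$-defined subvariety of a $k$-defined rational $G$-module $V_0$ (Remark \ref{rem:linear}). Since $v \in V_0(\bar k)$ lies in some finite extension of $k_s$, I pick a finite extension $k_1/k_s$ with $v \in V_0(k_1)$, choose a $k_s$-basis $\alpha_1, \ldots, \alpha_n$ of $k_1$, and write $v = \sum_i \alpha_i v_i$ uniquely with $v_i \in V_0(k_s)$. Setting $W' := V_0^n$ with the diagonal $G$-action and $w' := (v_1,\dots,v_n) \in W'(k_s)$ puts us exactly in the setup of Lemma \ref{lem:kpoint}, now applied with the base field taken to be $k_s$ rather than $k$.

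To verify property (i), let $M$ be a reductive $k$-defined subgroup of $G$ and $\lambda \in Y_k(M) \subseteq Y_{k_s}(M)$. By Lemma \ref{lem:kpoint}(i), $\lim_{a \to 0} \lambda(a)\cdot v$ exists if and only if $\lim_{a\to 0} \lambda(a) \cdot w'$ does; the first limit $v''$ automatically lies in $V$ because $V$ is closed and $G$-stable in $V_0$. By Lemma \ref{lem:kpoint}(ii), for $g \in M(k)\subseteq M(k_s)$ we have $v'' = g\cdot v$ if and only if $w'' = g\cdot w'$, which gives the equivalence of cocharacter-closedness over $k$. For property (ii), Lemma \ref{lem:kpoint}(iii) supplies $G_{w'} \subseteq G_v$ and $G_{w'}(k_s) = G_v(k_s)$; the implication $H \subseteq G_{w'} \Rightarrow H \subseteq G_v$ is then immediate, while the converse follows from $H(k_s) \subseteq G_v(k_s) = G_{w'}(k_s)$ combined with Zariski density of $H(k_s)$ in $H$, which holds because $H$ is smooth and connected. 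The main delicate point is precisely this density/descent step for (ii): it genuinely requires the connectedness hypothesis, and it leverages the fact that $\bar k/k_s$ is purely inseparable so that separable-closure density transfers to $\bar k$-density.
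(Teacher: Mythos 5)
Your proof is correct and follows essentially the same route as the paper: embed $V$ into a $k$-defined rational module via Remark~\ref{rem:linear}, apply Lemma~\ref{lem:kpoint} with base field $k_s$ to produce a $k_s$-tuple, and read off both conclusions of Theorem~\ref{thm:k-point} from parts~(i)--(iii) of that lemma together with density of $H(k_s)$ in $H$. One small inaccuracy in your closing remark: the density of $H(k_s)$ in $H$ follows merely from $H$ being a smooth $k$-defined (hence $k_s$-defined) group and does not require connectedness; connectedness of $H$ is needed elsewhere in the proof of Theorem~\ref{thm:k-point} (specifically in Lemma~\ref{lem:finquot}(iii)), not in this reduction step.
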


\begin{proof}
By Remark \ref{rem:linear}, we may assume that $V$ is a $k$-defined
rational $G$-module. 
Let $k_1/k_s$ be a finite field extension such that
$v \in V(k_1)$. We now apply Lemma \ref{lem:kpoint} to produce
$\tuple{v} \in V^n(k_s)$ for suitable $n$.
It follows from Lemma \ref{lem:kpoint}(i) and (ii) (for the field $k_s$) that
$M(k) \cdot v$ is cocharacter-closed if and only if 
$M(k) \cdot \tuple{v}$ is so. 

Suppose $H$ is a connected $k$-defined subgroup
of $G$.
If $H$ is contained in $G_\tuple{v}$, then 
by Lemma \ref{lem:kpoint}(iii)
we have $H \subseteq G_\tuple{v} \subseteq G_v$.
Conversely, suppose that $H \subseteq G_v$.
Since $H(k_s)$ is dense in $H$, it is enough to show
$H(k_s) \subseteq G_\tuple{v}$, which follows from the
equality $G_\tuple{v}(k_s) = G_v(k_s)$.
We conclude that we may replace $V$ by $V^n$ and
$v$ by $\tuple{v}$ to prove Theorem \ref{thm:k-point}.
\end{proof}

The next step is to make $v \in V(k_s)$ become stable under the action
of the Galois group. We may achieve this by passing from $V$ to the quotient
by a finite group, using the following result.

\begin{lem}
\label{lem:finquot}
Let $F$ be a finite abstract group, regarded as a $k$-defined algebraic group
endowed with the usual $k$-structure (so that $F = F(k)$).
Suppose $F\times G$ has a $k$-defined action on $V$. 
Then the action of $G$ on $V$ descends to give a $k$-defined action of $G$ on $V/F$.
Let $\pi\colon V\ra V/F$ be the canonical $G$-equivariant projection.
Then the following hold for $v \in V$:
\begin{itemize}
\item[(i)]
Let $\lambda\in Y(G)$.
Then $\lambda$ destabilizes $v$ if and only if $\lambda$ destabilizes $\pi(v)$.
\item[(ii)]
For any reductive $k$-defined subgroup $M \subseteq G$,
$M(k)\cdot v$ is cocharacter-closed over $k$ if and only if $M(k)\cdot \pi(v)$ is cocharacter-closed over $k$.
\item[(iii)]
Let $H \subseteq G$ be a connected subgroup. Then
$H \subseteq G_v$ if and only if $H \subseteq G_{\pi(v)}$.
\end{itemize}
\end{lem}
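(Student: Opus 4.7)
The plan is to dispatch the three parts in the order (i), (iii), (ii), since (ii) will rely on (i). For (i), the forward implication is immediate: since $\pi$ is a morphism of varieties, any extension $\widehat\phi_{v,\lambda}\colon\ovl k\to V$ of $\phi_{v,\lambda}$ yields $\pi\circ\widehat\phi_{v,\lambda}$ as an extension of $\phi_{\pi(v),\lambda}$. The reverse implication is the main work and uses crucially that $\pi\colon V\to V/F$ is a finite morphism, so $\mathcal O(V)$ is integral over $\mathcal O(V)^F=\mathcal O(V/F)$. Given that $\phi_{\pi(v),\lambda}^*$ factors through $\ovl k[a]$, for any $h\in\mathcal O(V)$ I would pick a monic integral equation $h^n+c_{n-1}h^{n-1}+\cdots+c_0=0$ over $\mathcal O(V/F)$, apply $\phi_{v,\lambda}^*$, and conclude that $\phi_{v,\lambda}^*(h)\in\ovl k[a,a^{-1}]$ is integral over $\ovl k[a]$. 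Since $\ovl k[a]$ is integrally closed in its fraction field, $\phi_{v,\lambda}^*(h)\in\ovl k[a]$, producing the sought extension. (Equivalently, one can apply the valuative criterion of properness to the proper morphism $\pi$.)

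Part (iii) is then immediate. The $G$-equivariance of $\pi$ gives $H\subseteq G_v\Rightarrow H\subseteq G_{\pi(v)}$. Conversely, if $H\subseteq G_{\pi(v)}$, the orbit map $h\mapsto h\cdot v$ has image contained in the finite fibre $\pi^{-1}(\pi(v))=F\cdot v$; since $H$ is connected, this image must be a single point, so $H\cdot v=\{v\}$.

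For (ii), the forward direction is immediate from (i): if $\lambda\in Y_k(M)$ destabilises $\pi(v)$ to some $\pi(v)'$, lift via (i) to a destabilisation of $v$ to $v'$ with $\pi(v')=\pi(v)'$, and cocharacter-closedness of $M(k)\cdot v$ gives $v'=m\cdot v$, hence $\pi(v)'=m\cdot\pi(v)$. The reverse direction is the main obstacle: a priori, a limit $v'$ of $v$ with $\pi(v')\in M(k)\cdot\pi(v)$ only yields $v'\in M(k)\cdot(f\cdot v)$ for some $f\in F$, not $v'\in M(k)\cdot v$. I would close this gap as follows. Assume $M(k)\cdot\pi(v)$ is cocharacter-closed and let $\mathcal O$ be the unique cocharacter-closed $M(k)$-orbit in $\ccc{M(k)\cdot v}$ furnished by Theorem~\ref{thm:uniqueaccessible}; the aim is $\mathcal O=M(k)\cdot v$. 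Combining (i) with Lemma~\ref{lem:transitiveclosure}(i), one checks that $\pi(\ccc{M(k)\cdot v})=\ccc{M(k)\cdot\pi(v)}=M(k)\cdot\pi(v)$, and the forward direction of (ii) applied to a representative of $\mathcal O$ shows $\pi(\mathcal O)$ is a cocharacter-closed $M(k)$-orbit contained in $M(k)\cdot\pi(v)$, hence equal to it; thus some $f_0\cdot v$ lies in $\mathcal O$, i.e., $\mathcal O=M(k)\cdot f_0v$. Since $F$ commutes with $M(k)$ and with every cocharacter of $G$ through the action on $V$, applying $f_0$ preserves cocharacter-closures: $\ccc{M(k)\cdot f_0v}=f_0\cdot\ccc{M(k)\cdot v}$. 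From $f_0v\in\ccc{M(k)\cdot v}$ I iterate to obtain $f_0^{i+1}v\in\ccc{M(k)\cdot f_0^iv}$ for every $i$, and using $f_0^n=1$ (where $n$ is the order of $f_0$) the chain of $\prec$-relations $M(k)\cdot v=M(k)\cdot f_0^nv\prec M(k)\cdot f_0^{n-1}v\prec\cdots\prec M(k)\cdot f_0v$ yields $v\in\ccc{M(k)\cdot f_0v}$. But $M(k)\cdot f_0v=\mathcal O$ is already cocharacter-closed, so $\ccc{M(k)\cdot f_0v}=\mathcal O$; thus $v\in\mathcal O$, forcing $\mathcal O=M(k)\cdot v$, which is therefore cocharacter-closed.
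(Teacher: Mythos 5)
Your proposal is correct, but it departs from the paper's proof at the two non-trivial steps, namely the reverse implications of (i) and (ii).

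For the reverse implication of (i), the paper linearises: it embeds $V$ in an $(F\times G)$-module, decomposes $V=V_{\lambda,\ge 0}\oplus V_{\lambda,<0}$ (both $F$-stable), chooses positive-weight generators of $\ovl{k}[V_{\lambda,<0}]^F$ with zero constant term, and concludes that the negative component of $v$ must vanish. You instead exploit that $\pi\colon V\to V/F$ is a finite (hence proper) morphism, so $\mathcal{O}(V)$ is integral over $\mathcal{O}(V/F)$, and then let the integral closedness of $\ovl{k}[a]$ in $\ovl{k}[a,a^{-1}]$ do the work. This is a clean, genuinely different argument that avoids the reduction to modules entirely; your parenthetical remark about the valuative criterion of properness is the same point in geometric language.

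For the reverse implication of (ii), the paper argues contrapositively: if $M(k)\cdot v$ is not cocharacter-closed, then by Corollary~\ref{cor:conn_nonconn} neither is $(F\times M)(k)\cdot v$, and a $k$-cocharacter properly destabilizing $v$ for $F\times M$ descends, via the identity $\pi^{-1}(\pi(v))=F\cdot v$, to one properly destabilizing $\pi(v)$ for $M$. Your route is quite different: you invoke the rational Hilbert--Mumford theorem (Theorem~\ref{thm:uniqueaccessible}) to locate the unique cocharacter-closed orbit $\mathcal{O}$ in $\ccc{M(k)\cdot v}$, identify $\mathcal{O}=M(k)\cdot f_0 v$ for some $f_0\in F$ via the equality $\pi(\ccc{M(k)\cdot v})=\ccc{M(k)\cdot\pi(v)}$ (which does need both (i) and an orbit-by-orbit lifting of accessibility chains, but checks out), and then iterate through powers of $f_0$ to force $v\in\mathcal{O}$. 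This is correct, but longer; the paper's trick of absorbing the ambiguity by $F$ into the auxiliary group $F\times M$ (which is legitimate because cocharacter-closedness only sees the identity component) is slicker and handles the translation by $f_0$ all at once rather than by induction.
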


\begin{proof}
We first note that $V/F$ is defined over $k$ by \cite[2.2]{BR}, so the statement of the proposition makes sense. 

(i).
The forward direction is obvious.
So suppose $\lambda$ destabilizes $\pi(v)$.
By Remark \ref{rem:linear}, there is a $G$-equivariant closed embedding of $V$
in an $F\times G$-module $W$.
Let $\pi_W\colon W\ra W/F$ be the canonical projection.
We have an induced $G$-equivariant map $\phi$ from $V/F$ to $W/F$,
and $\lambda$ destabilizes $\phi(\pi(v))= \pi_W(v)$.
Hence it is enough to prove the result when $V= W$.
 
Clearly, we can take $G$ to be ${\rm Im}(\lambda)$.
Let $V_1= V_{\lambda,\geq 0}$ and let $V_2= V_{\lambda, <0}$.
Then $V= V_1\oplus V_2$ and $V_1$ and $V_2$ are both $F$-stable, since $F$ commutes with $G$.
The projection $V\ra V_2$ is $G$-equivariant and gives rise to a $G$-equivariant map $V/F\ra V_2/F$. Let $\pi_2\colon V_2\ra V_2/F$ be the canonical projection.
 
 The group $G$ acts on the dual space $V_2^*$.  
 Let $X_1,\ldots, X_m\in V_2^*$ be a basis such that each $X_i$ is a weight vector for $\lambda$. Since the weights of $\lambda$ on $V_2$ are all negative,
 these weights are all positive.  We can regard the $X_i$ as regular functions on $V_2$.
 Choose a generating set $f_1,\ldots, f_r$ for the ring of invariants $\ovl{k}[V_2]^F$.
 We can assume that each $f_j$ is a polynomial in the $X_i$ with no constant term and,
 since $\ovl{k}[V_2]^F$ is $G$-stable, we can assume also that each $f_j$ is a weight vector for $\lambda$.  Clearly each of these weights is positive.
 
Write $v= (v_1,v_2)$.
Since $\pi(v)$ is destabilized by $\lambda$, so is $\pi_2(v_2)$.
If $f \in \overline{k}[V_2]^F$ has weight $m>0$ with respect to $\lambda$, then
$f(\lambda(a) \cdot \pi_2(v_2) ) = (\lambda(a^{-1}) \cdot f)(\pi_2(v_2)) 
= a^{-m} f (v_2)$, and so $\lim_{a\to 0} \lambda(a)\cdot \pi_2(v_2)$ can only exist if $f(v_2)=0$.
It follows that $f_j(v_2)= 0= f_j(0)$ for all $j$.
This implies that $\pi_2(v_2)= \pi_2(0)$.
But $F$ is finite, so we must have $v_2= 0$.
Hence $v= (v_1,0)\in V_1$ and so $\lambda$ destabilizes $v$, as required.

(ii).
Suppose $M(k)\cdot v$ is cocharacter-closed over $k$.
Let $\lambda\in Y_k(M) \subseteq Y_k(G)$ such that $\lambda$ destabilizes $\pi(v)$.
Then $\lambda$ destabilizes $v$ by part (i), so there exists $g\in M(k)$ such that 
$\lim_{a\to 0} \lambda(a)\cdot v= g\cdot v$.
Now $g\cdot \pi(v)= \pi(g\cdot v)= \pi\left(\lim_{a\to 0} \lambda(a)\cdot v\right)= \lim_{a\to 0} \lambda(a)\cdot \pi(v)$.
It follows that $M(k)\cdot \pi(v)$ is cocharacter-closed over $k$.
 
 Conversely, suppose $M(k)\cdot v$ is not cocharacter-closed over $k$.
 Then $(F\times M)(k)\cdot v$ is not cocharacter-closed over $k$, by Corollary \ref{cor:conn_nonconn}. 
 Hence there exists $\lambda\in Y_k(M)$ such that $v':= \lim_{a\to 0} \lambda(a)\cdot v$ exists and is not $(F\times M)(k)$-conjugate to $v$.
 Now $\pi(v')= \lim_{a\to 0} \lambda(a)\cdot \pi(v)$.
 If $\pi(v')$ is $M(k)$-conjugate to $\pi(v)$, say $g\cdot \pi(v)= \pi(v')$, then $\pi(g\cdot v)= \pi(v')$, so $g\cdot v\in F\cdot v'$.
 But then $v'$ is $(F\times M)(k)$-conjugate to $v$, a contradiction.
 Hence $M(k)\cdot \pi(v)$ is not cocharacter-closed over $k$.

(iii).
We have $G_v \subseteq G_{\pi(v)}$, hence one assertion of (iii) is clear.
Conversely, suppose that $H \subseteq G_{\pi(v)}$ is a connected
subgroup. Then the orbit map $H \rightarrow H\cdot v$ has image in the finite set $F \cdot v$.
As $H$ is connected, this implies that $H \subseteq G_v$, as required.
\end{proof}

We are now in a position to prove Theorem \ref{thm:k-point}.

\begin{proof}[{Proof of Theorem \ref{thm:k-point}}]
By Lemma \ref{lem:k_s-point}, we may assume $v \in V(k_s)$.
Let $1=\gamma_1,\dots,\gamma_r \in \Gamma$ be chosen such that
$\{v,\gamma_2(v),\dots,\gamma_r(v)\}$ is the $\Gamma$-orbit
through $v$.
Set $\tuple{v} = (v, \gamma_2(v), \dots, \gamma_r(v)) \in V(k_s)^r$.
We first show that we may replace $v$ by $\tuple{v}$, where we 
consider the diagonal action of $M$ on $V^r$.
Let $\lambda \in Y_k(M)$. As $\gamma(\lambda) = \lambda$ for all $\gamma \in \Gamma$,
we have that $\lambda$ destabilizes $v$ if and only if it
destabilizes the tuple $\tuple{v}$.
If $\lambda$ destabilizes $v$ and $\lim_{a \to 0}\lambda(a) \cdot v = v'$,
then $\lim_{a\to 0} \lambda(a) \cdot \tuple{v} = (\gamma_1(v'),\dots,\gamma_r(v')) =: \tuple{v}'$.
For $g \in M(k)$, the equation $\gamma(g)=g$ for all $\gamma \in \Gamma$ implies
that $g\cdot v = v'$ if and only if $g\cdot\tuple{v} = \tuple{v}'$.
In particular, $M(k)\cdot v$ is cocharacter-closed over $k$ if and only if
$M(k)\cdot \tuple{v}$ is so.

Now let $H \subseteq G$ be a $k$-defined connected subgroup.
If $H \subseteq G_{\tuple{v}}$, then clearly $H \subseteq G_v$.
Conversely, suppose $H \subseteq G_v$ and let $h \in H(k_s)$.
As $\gamma(h) \in H(k_s)$ for all $\gamma \in \Gamma$, we get
$h \in G_{\tuple{v}}$. Since $H(k_s)$ is dense in $H$, this implies
that $H \subseteq G_{\tuple{v}}$.
It thus suffices to construct $W$ and $w$ and prove the assertions in (i) and (ii) for $\tuple{v}$.

Let $F = S_r$ be the symmetric group on $r$ letters, and let $F$ act
on $V^r$ by permuting the coordinates. Then the action of $F$ commutes
with the diagonal action of $G$, hence $F \times G$ acts on $V^r$.
As the assumptions of Lemma \ref{lem:finquot} are satisfied, we may
replace $V^r$ by $W = V^r/S_r$ and $\tuple{v}$ by $w = \pi(\tuple{v})
\in W(k_s)$. Now by construction, $\gamma(w) = w$ for all $\gamma \in \Gamma$,
hence $w$ is a $k$-point. This finishes the proof.
\end{proof}

\section{Geometric and rational conjugacy}
\label{sec:mainconj}

Our next theorem allows us to relate geometric $G$-conjugacy to
rational $R_u(P_\lambda)$-conjugacy, provided the
orbit has a $k$-defined stabilizer.
The result relies on Theorem \ref{thm:Ruconj}, 
but here we do not require $k$ to be perfect.

\begin{thm}
\label{thm:MCkdefinedstabilizer}
Let $v \in V$ and suppose that $G_v^0$ is $k$-defined.
Let $\lambda \in Y_k(G)$.
Suppose that $v' = \lim_{a \to 0} \lambda(a) \cdot v$ 
exists and is $G$-conjugate to $v$.
Then $v'$ is $R_u(P_\lambda)(k)$-conjugate to $v$.
\end{thm}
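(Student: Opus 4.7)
The plan is to first use the perfect-field case to reduce geometric conjugacy to $R_u(P_\lambda)(\bar{k})$-conjugacy, and then descend to $k$ using the hypothesis on $G_v^0$. Apply Theorem~\ref{thm:Ruconj} over $\bar{k}$ (which is perfect) to the $G(\bar{k})$-conjugacy of $v$ and $v'$: this produces $u_0 \in R_u(P_\lambda)(\bar{k})$ with $u_0 \cdot v = v'$. The task is then to locate a $k$-rational element in the fibre
\[
T := \{u \in R_u(P_\lambda) \mid u \cdot v = v'\} = u_0 \cdot (R_u(P_\lambda) \cap G_v).
\]

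Next I extract a useful cocharacter by setting $\mu := u_0^{-1} \cdot \lambda$. Using $\lambda(a) \cdot v' = v'$, a direct computation gives $\mu(a) \cdot v = v$ for every $a$, so $\mu$ takes values in the identity component $G_v^0$. Since $u_0 \in R_u(P_\lambda) \subseteq P_\lambda$, we have $P_\mu = P_\lambda$ and $R_u(P_\mu) = R_u(P_\lambda)$. As $\mu$ is a cocharacter into the $k$-defined subgroup $G_v^0$, it trivially normalizes $G_v^0$, so Proposition~\ref{prop:smoothintersection} applies to the pair $(G_v^0, \mu)$. We conclude that
\[
U := G_v^0 \cap R_u(P_\lambda) = R_u(P_\mu(G_v^0))
\]
is smooth, and, being the unipotent radical of an R-parabolic subgroup of the connected group $G_v^0$, is also connected. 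Moreover, $U$ is $k$-defined, since both $G_v^0$ and $R_u(P_\lambda)$ are.

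It remains to exhibit a $k$-point in the sub-coset $T^0 := u_0 \cdot U \subseteq T$, which is a non-empty torsor under the $k$-defined, connected, smooth unipotent group $U$; this is the crux of the proof. The strategy is to adapt the Galois-cohomological descent used in the proof of Theorem~\ref{thm:rupgalois}. There, the essential inputs are the $\Gamma$-stability of $G_v(k_s)$---which produces a $k$-defined subgroup of $R_u(P_\lambda)$ making the fibre into a torsor---and its smoothness and connectedness, from which the vanishing of the relevant $H^1$ follows. Here these roles are played by the $k$-definedness of $G_v^0$ and the properties of $U$ just established. The main obstacle is that our hypothesis is genuinely weaker than in Theorem~\ref{thm:rupgalois}: only $G_v^0$, not the full stabilizer $G_v$, is assumed to be $k$-defined, so we cannot cite Theorem~\ref{thm:rupgalois} directly. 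This is circumvented by restricting attention from $T$ to $T^0$: the latter is entirely controlled by the $k$-defined connected group $G_v^0$, and once this restriction is in place the descent succeeds, yielding an element $u \in R_u(P_\lambda)(k)$ with $u \cdot v = v'$.
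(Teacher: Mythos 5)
Your proposal diverges from the paper's argument right after the common first step (applying Theorem~\ref{thm:Ruconj} over $\bar{k}$), and the divergence is where the gap lies. The paper does \emph{not} try to find a $k$-rational point in the fibre by descent. Instead, once it has $u$ with $u\cdot\lambda\in Y(G_v^0)$, it uses Proposition~\ref{prop:smoothintersection} and $k$-definedness of $G_v^0$ to produce a $k$-defined maximal torus $S$ of $P_{u\cdot\lambda}(G_v^0)$ (hence of $G_v^0$), extends $S$ to a $k$-defined maximal torus of $P_\lambda$, and conjugates $\lambda$ by some $x\in R_u(P_\lambda)(k)$ so that $\lambda$ centralizes $S$. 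At that point $S\subseteq G_{v'}$; since $G_{v'}$ is $G$-conjugate to $G_v$ it has the same rank, so $S$ is a maximal torus of $G_{v'}$, forcing $\IM(\lambda)\subseteq S\subseteq G_v$ and hence $v'=v$ (for the conjugated data). This rank argument is what makes the proof go through over arbitrary fields, and it is entirely absent from your proposal.

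The step in your argument that is not justified is the final paragraph: ``once this restriction is in place the descent succeeds.'' There are two concrete obstacles. First, you only have $u_0\in R_u(P_\lambda)(\bar k)$ from Theorem~\ref{thm:Ruconj}, not $u_0\in R_u(P_\lambda)(k_s)$, so $T^0=u_0U$ has no a priori $k_s$-point and indeed no a priori $k$-structure; in particular you cannot even state that $T^0$ is a $k$-defined torsor, which is a prerequisite for invoking any $H^1$-vanishing. (Contrast Theorem~\ref{thm:rupgalois}, whose hypothesis explicitly supplies $R_u(P_\lambda)(k_s)$-conjugacy.) Second, even granting $k$-definedness of $T^0$, the cohomological input you want—vanishing of the relevant $H^1$ for the smooth connected unipotent $k$-group $U=G_v^0\cap R_u(P_\lambda)$—is not automatic over a non-perfect field: a $k$-subgroup of the $k$-split group $R_u(P_\lambda)$ need not itself be $k$-split, and for non-split smooth connected unipotent groups the first cohomology can be nontrivial. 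The identifications $U=R_u(P_\mu(G_v^0))$ and the smoothness/$k$-definedness you establish are genuine and reused by the paper, but they do not by themselves deliver the descent; the paper circumvents the whole torsor question by the torus/rank trick above, which also has the virtue of never needing $v$ or $v'$ to be $k_s$-rational.
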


\begin{proof}
Let $g \in G$ satisfy $v' = g \cdot v$.
By Theorem \ref{thm:Ruconj}, we can find $u \in R_u(P_\lambda)$ such that
$v' = u^{-1} \cdot v$. Then $u \cdot \lambda$ is a cocharacter of $G_v^0$.

We first claim that $P_{u\cdot \lambda}(G_v^0)$ is $k$-defined.  
Indeed, by Proposition \ref{prop:smoothintersection} this group coincides with
the scheme-theoretic intersections
$G_v^0 \cap P_{u \cdot \lambda} = G_v^0 \cap P_\lambda$,
which are therefore smooth.
According to our assumptions both $G_v^0$ and $P_\lambda$ are $k$-defined,
and thus the (smooth) intersection is $k$-defined by
\cite[Prop.\ 12.1.5]{springer}.

By \cite[Thm.\ 18.2]{borel}, $P_{u \cdot \lambda}(G_v^0)$ contains a $k$-defined
maximal torus $S$, which automatically is a maximal torus of $G_v^0$.
Then $S$ is contained in some $k$-defined maximal torus $T$ of $P_{u\cdot \lambda}$.
Since $T$ is contained in a $k$-defined R-Levi subgroup of $P_{u\cdot \lambda}$
and all such subgroups are $R_u(P_\lambda)(k)$-conjugate,
we can find $x \in R_u(P_\lambda)(k)$ such that
$S \subseteq L_{x \cdot \lambda}$.
After replacing $\lambda$ with $x \cdot \lambda$ and $v'$ with $x \cdot v'$,
we may assume without loss of generality that $\lambda$ centralises $S$.
This forces $S$ to be contained in $G_{v'}$, since $v' = \lim_{a \to 0} \lambda(a) \cdot v$. By assumption, $G_{v'}$ is
$G$-conjugate to $G_v$ and thus has the same rank. In particular, $S$ is a maximal
torus of $G_{v'}$. But as $\lambda$ is a cocharacter of $G_{v'}$ that commutes with $S$,
we deduce that $\IM(\lambda) \subseteq S \subseteq G_v$.
Hence $v' = v$, which finishes the proof.
\end{proof}

We record a number of consequences of 
Theorem \ref{thm:MCkdefinedstabilizer}
which include
Theorem \ref{thm:Gvkdef}(i), (ii) (first assertion) and (iv).

\begin{cor}
\label{cor:MCkdefinedstabilizer}
Let $v \in V$ and suppose that $G_v^0$ is $k$-defined.
\begin{itemize}
\item[(i)] Let $\lambda \in Y_k(G)$.
Suppose that $v' = \lim_{a \to 0} \lambda(a) \cdot v$ exists and is $G$-conjugate to $v$.
Then $v'$ is $G(k)$-conjugate to $v$. 
\item[(ii)]
Suppose $G\cdot v$ is Zariski-closed. Then $G(k)\cdot v$ is cocharacter-closed
over $k$.
\item[(iii)]
Let $k'/k$ be an algebraic field extension and suppose that $G(k')\cdot v$ is
cocharacter-closed over $k'$. Then $G(k)\cdot v$ is cocharacter-closed over $k$.
\item[(iv)]
Let $S$ be a $k$-defined torus of $G_v$ and set $L:= C_G(S)$.
Let $\lambda\in Y_k(L)$ such that $v':= \lim_{a\to 0} \lambda(a)\cdot v$ exists and
$v'\in G\cdot v$.  Then $v'\in R_u(P_\lambda(L))(k)\cdot v$.
\item[(v)] Let $w \in V$ and suppose that
both $G(k)\cdot w \prec G(k)\cdot v$ and $G(k)\cdot v \prec G(k)\cdot w$.
Then $G(k) \cdot v= G(k)\cdot w$.
\end{itemize}
\end{cor}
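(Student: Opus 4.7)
The plan is to derive all five parts from Theorem \ref{thm:MCkdefinedstabilizer}, invoking Proposition \ref{prop:ruplevi} for (iv) and Lemma \ref{lem:transitiveclosure} together with a geometric dimension argument for (v). Parts (i), (ii), (iii) are essentially immediate. For (i), Theorem \ref{thm:MCkdefinedstabilizer} directly produces $u\in R_u(P_\lambda)(k)$ with $u\cdot v=v'$, which in particular gives $v'\in G(k)\cdot v$. For (ii), if $G\cdot v$ is Zariski-closed then any limit $v'=\lim_{a\to 0}\lambda(a)\cdot v$ lies in $G\cdot v$ because this set is $G$-stable and closed, so $v'$ is $G$-conjugate to $v$ and (i) applies. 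For (iii), a cocharacter $\lambda\in Y_k(G)\subseteq Y_{k'}(G)$ that admits a limit sends $v$ to some $v'\in G(k')\cdot v\subseteq G\cdot v$ by cocharacter-closedness over $k'$, and (i) again concludes.

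For (iv), Theorem \ref{thm:MCkdefinedstabilizer} first supplies some $u\in R_u(P_\lambda)(k)$ with $u\cdot v=v'$. Since $\lambda\in Y_k(L)$, $S$ is a $k$-defined torus of $G_v$ and $L=C_G(S)$, the hypotheses of Proposition \ref{prop:ruplevi} are in force, and that result refines $u$ to lie in $R_u(P_\lambda(L))(k)$, as required.

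The substantive case is (v). The hypothesis combined with Lemma \ref{lem:transitiveclosure}(ii) yields an accessibility chain $G(k)\cdot v=G(k)\cdot v_0, G(k)\cdot v_1,\ldots,G(k)\cdot v_n=G(k)\cdot w$ in which $G(k)\cdot v_{i+1}$ is $1$-accessible from $G(k)\cdot v_i$ for each $i$, together with an analogous chain from $w$ back to $v$. Because a $1$-accessible orbit lies in the Zariski closure of its predecessor, $\dim G\cdot v_{i+1}\le\dim G\cdot v_i$ along both chains; concatenating forces $\dim G\cdot v=\dim G\cdot w$, and every intermediate geometric orbit then has this common dimension. A $G$-orbit sitting in the closure of another $G$-orbit of equal dimension must coincide with it, so $G\cdot v_i=G\cdot v$ for every $i$.

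It remains to upgrade this geometric equality to equality of rational orbits, which I propose to do by induction on $i$. Choosing a representative $v_i$ of $G(k)\cdot v_i$ and a cocharacter $\lambda_i\in Y_k(G)$, we may arrange that $v_{i+1}':=\lim_{a\to 0}\lambda_i(a)\cdot v_i$ lies in $G(k)\cdot v_{i+1}$. The induction hypothesis $v_i\in G(k)\cdot v$ makes $G_{v_i}^0$ a $G(k)$-conjugate of $G_v^0$, hence $k$-defined; the previous paragraph shows that $v_{i+1}'$ is $G$-conjugate to $v_i$; part (i) then upgrades this to $G(k)$-conjugacy, yielding $v_{i+1}\in G(k)\cdot v$. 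The delicate point, and the main obstacle, is precisely this $k$-definedness bookkeeping: the property that $G_x^0$ be $k$-defined is not preserved under arbitrary $1$-accessibility steps, and the induction works only because it keeps each $v_i$ inside the rational orbit of $v$ from the start.
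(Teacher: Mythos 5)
Your proposal is correct and follows essentially the same route as the paper: parts (i)--(iii) reduce directly to Theorem \ref{thm:MCkdefinedstabilizer}, part (iv) combines the theorem with Proposition \ref{prop:ruplevi}, and part (v) uses the accessibility chain from Lemma \ref{lem:transitiveclosure}, the dimension argument showing all intermediate geometric orbits agree, and then an inductive application of part (i) that crucially tracks the $k$-definedness of $G_{v_i}^0$ by keeping each $v_i$ in $G(k)\cdot v$. The only cosmetic difference is that you concatenate two accessibility chains while the paper chooses a single chain of representatives running from $v$ through $w$ and back to a $G(k)$-translate of $v$; the underlying argument is the same.
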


\begin{proof}
Parts (i) and (ii) are immediate consequences of Theorem \ref{thm:MCkdefinedstabilizer}.
Part (iii) follows from (i) and the fact that $G(k')$-conjugacy implies $G$-conjugacy.

Part (iv) follows from Theorem \ref{thm:MCkdefinedstabilizer} and Proposition \ref{prop:ruplevi}.

For (v), 
by assumption, there exist cocharacters $\lambda_1,\dots,\lambda_n \in Y_k(G)$ and
elements $v_1, \dots,$ $v_{n+1} \in V$ such that
$v_1 = v$, $v_{n+1} = g \cdot v$ for some $g \in G(k)$, $v_j = g' \cdot w$ for some
$j$ and some $g' \in G(k)$, and
$\lim_{a \to 0}\lambda_i(a)\cdot v_i = v_{i+1}$ for $1\leq i\leq n$.
As $v_1$ and $v_{n+1}$ are $G$-conjugate, all of the $v_i$ are $G$-conjugate
(for if $v_i \not \in G \cdot v_{i-1}$, then, as $v_{i}$ lies in the closure
of $G \cdot v_{i-1}$, the orbit $G \cdot v_i$ has strictly smaller dimension than $G \cdot v_{i-1}$,
which forces all of the subsequent orbits $G \cdot v_i,\dots, G \cdot v_{n+1}$ to have smaller dimension than $G \cdot v$).
By (i), $v_2$ is $G(k)$-conjugate to $v_1$.
In particular, $G_{v_2}^0$ is again $k$-defined. Repeating the argument for $v_3$ and so on,
we find that $w$ is $G(k)$-conjugate to $v$, as required.
\end{proof}

\begin{rems} 
\label{rem:strongerrationality}
(i).
Note that Theorem \ref{thm:MCkdefinedstabilizer} fails without
the assumption on $G_v^0$.
E.g., see \cite[Rem.\ 5.10]{GIT} for the failure of
Corollary \ref{cor:MCkdefinedstabilizer}(ii) without this assumption. 

(ii).
Suppose that $G_v^0$ is $k$-defined and that 
$G(k) \cdot v$ is not cocharacter-closed over $k$.
By Theorem \ref{thm:MCkdefinedstabilizer}, there exists a $k$-defined
cocharacter $\lambda$ such that $v' = \lim_{a \to 0} \lambda(a) \cdot v$
exists and does not belong to $G \cdot v$.
In the terminology of \cite[\S4]{GIT}, $v$ is uniformly
$S$-unstable over $k$ for $S: = \overline{G \cdot v'}$, and $v$ does
not belong to $S$ (cf.\ the proof of Lemma~\ref{lem:geom_galois_ascent}).
In particular, there exist non-trivial cocharacters which
belong to the \emph{optimal class for $v$ with respect to $S$ over $k$},
see \cite[Thm.\ 4.5]{GIT}.

(iii).
Corollary \ref{cor:MCkdefinedstabilizer} refines 
the descent assertions in Theorem \ref{thm:leviascentdescent} and Proposition \ref{prop:galois_descent} (Levi and Galois descent),  
albeit under the stronger hypothesis that
$G_v^0$ is $k$-defined. 

(iv).
Corollary \ref{cor:MCkdefinedstabilizer}(i) is an instance where $G$-conjugacy
descends to $G(k)$-conjugacy. This property may be studied in cohomological terms as follows:
Suppose that $v,v' \in V(k)$ are $k$-points which are $G$-conjugate. This means that
$v' \in (G \cdot v)(k)$. Thus $G(k)$-conjugacy would follow
provided that the map $G(k) \rightarrow (G\cdot v)(k)$ is surjective.
By \cite[III, \S4, Cor.\ 4.7]{DG} this is automatic if
all $\mathcal{G}_v$-torsors over $k$ are trivial, where $\mathcal{G}_v$ is
the scheme-theoretic stabilizer of $v$ in $G$.
If we further assume that either $k$ is perfect or that the orbit map 
$G \rightarrow G\cdot v$ is separable, then the isomorphism classes
of $\mathcal{G}_v$-torsors over $k$ are parametrized by the 
Galois cohomology $H^1(\Gamma,\mathcal{G}_v)$, where $\Gamma = \Gal(k_s/k)$ (see \cite[III, \S5, Cor.\ 3.5,3.6]{DG}).
But even in the separable case, $H^1(\Gamma,\mathcal{G}_v)$ does not always vanish.
Our assumption that $v'$ arises as the limit along a $k$-defined cocharacter $\lambda$
allows us to circumvent all these technical difficulties and to avoid additional
assumptions.

(v).
Bremigan considers the case where
the field $k$ has characteristic $0$ and is complete 
under a non-trivial real absolute value, \cite{Bremigan}.
In \cite[Prop.\ 5.3]{Bremigan}, he proves that if $G \cdot v$ is
Zariski-closed, then $G(k) \cdot v$ is Hausdorff-closed (i.e., closed in the topology
induced by the extra structure on $k$). 
It follows that 
$G(k) \cdot v$ is cocharacter-closed over $k$, as the limit along
a $k$-defined cocharacter belongs to the Hausdorff-closure of $G(k) \cdot v$.
Hence, we obtain Corollary \ref{cor:MCkdefinedstabilizer}(ii) in this case.

(vi).
Corollary \ref{cor:MCkdefinedstabilizer}(v) does not imply that the relation
$\prec$ on all $G(k)$-orbits is antisymmetric. It does, however, assert that antisymmetry holds if one of the two orbits has a $k$-defined stabilizer.
\end{rems}

The following result shows that the assumption
in Theorem \ref{thm:MCkdefinedstabilizer}
that $G_v^0$ is $k$-defined
is automatic in many naturally-occurring cases (e.g., if $\Char(k)=0$).

\begin{prop} 
\label{prop:perfectorseparable}
Let $v \in V$ and 
assume that one of the following conditions is
satisfied:
\begin{itemize}
\item[(i)]
$v \in V(k)$ and $k$ is perfect;
\item[(ii)]
$v \in V(k_s)$, $G_v(k_s)$ is $\Gamma$-stable and $G\cdot v$ is separable.
\end{itemize}
Then $G_v^0$ is $k$-defined. 
In particular, 
the assertions of Theorem \ref{thm:MCkdefinedstabilizer} and
Corollary \ref{cor:MCkdefinedstabilizer} hold for $v$.
\end{prop}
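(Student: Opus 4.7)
The overall strategy is to show that $G_v$ itself is a $k$-defined closed subgroup of $G$; once this is established, $G_v^0$ is automatically $k$-defined as the identity component of a $k$-defined algebraic group, and the ``in particular'' clause follows immediately by quoting Theorem \ref{thm:MCkdefinedstabilizer} and Corollary \ref{cor:MCkdefinedstabilizer}. In both situations the starting observation is the same: the scheme-theoretic stabilizer $\mathcal{G}_v$ of $v$ in $G$, being cut out of $G$ by the equations encoding $g \cdot v = v$, is defined over any field of definition of $v$, and the real issue is to descend its underlying reduced group variety $G_v$ from that field down to $k$.

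For case (i), the hypothesis $v \in V(k)$ gives that $\mathcal{G}_v$ is a $k$-defined closed subscheme of $G$. Since $k$ is perfect, the associated reduced subscheme $(\mathcal{G}_v)_{\mathrm{red}} = G_v$ is also $k$-defined; this is a standard descent result, ultimately reflecting the fact that the nilradical of a finitely generated $\overline{k}$-algebra with a $k$-structure is $\Gamma$-stable when $k$ is perfect. Hence $G_v$, and therefore $G_v^0$, is $k$-defined.

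For case (ii), the hypothesis $v \in V(k_s)$ provides that $\mathcal{G}_v$ is $k_s$-defined, while the separability of $G\cdot v$ is precisely the assertion that $\mathcal{G}_v$ is smooth, so $\mathcal{G}_v$ coincides with $G_v$ as $k_s$-defined subgroup schemes of $G$. To descend from $k_s$ to $k$ we invoke the Galois criterion recalled in Section~\ref{ssec:basic}: because $G_v$ is smooth over $k_s$, the set of $k_s$-points $G_v(k_s)$ is dense in $G_v$, and the hypothesis that $G_v(k_s)$ is $\Gamma$-stable then forces $G_v$ to be $k$-defined, whence $G_v^0$ is $k$-defined as required.

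The only step needing care is the appeal in case (i) to descent of the reduced structure under a perfect base field, but this is a standard fact and presents no real obstacle; case (ii) is essentially an immediate application of the Galois criterion once smoothness is used both to identify $\mathcal{G}_v$ with $G_v$ and to guarantee density of $k_s$-points in $G_v$.
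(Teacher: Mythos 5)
Your proof is correct and follows essentially the same route as the paper: for (i) you expand out the standard argument (descent of the reduced structure over a perfect base field) that the paper simply cites from Springer's Prop.~12.1.2, and for (ii) you identify the smooth scheme-theoretic stabilizer with $G_v$ to get $k_s$-definedness and then apply the Galois criterion using $\Gamma$-stability of $G_v(k_s)$, exactly as the paper does.
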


\begin{proof}
Case (i) is \cite[Prop.\ 12.1.2]{springer}.
For (ii), suppose that $v\in V(k_s)$, that $G_v(k_s)$ is $\Gamma$-stable and
that $G \cdot v$ is separable. Then $G_v^0$ is $k$-defined. Indeed, separability implies
that $G_v$ is $k_s$-defined (see \cite[\emph{loc.\ cit.}]{springer}).
Due to the assumption on $G_v(k_s)$, the Galois criterion
for $k$-definedness implies that $G_v$ is $k$-defined, hence so is $G_v^0$.
\end{proof}

\begin{rem}
\label{rem:perfectorseparable}
There are situations where $G_v^0$ is $k$-defined for all
$k$-points $v \in V(k)$. For instance, Proposition \ref{prop:perfectorseparable} 
above implies that
this is the case whenever $k$ is perfect or all $G(k)$-orbits are separable.
In this case $\prec$ is antisymmetric on the set $\{ G(k)\cdot v\mid v \in V(k)\}$,
hence is a partial order on this set.
We do not know whether antisymmetry holds in general.
See also Proposition \ref{prop:anti} below.
\end{rem}

We give two simple examples.
The first example illustrates that having $G_v$ $k$-defined does
not imply separability in general 
(cf.\ Proposition \ref{prop:perfectorseparable}(ii)).
The second example shows that even for separable orbits, the
converse of Corollary \ref{cor:MCkdefinedstabilizer}(ii) is false in general.

\begin{exs} 
\label{ex:SL2,GL2}
(i).
Let $G = \SL_2$, $\Char(k)=2$, and consider the adjoint action of 
$G$ on $V = \mathfrak{sl}_2$.
Let $v = \begin{pmatrix}0 &1 \\ 0&0\end{pmatrix} \in V(k)$.
Then $G_v = \left\{\begin{pmatrix}1 &a \\ 0&1\end{pmatrix} 
\:\: \vline \:\: a \in k\right\}$ is $k$-defined, 
but $G \cdot v$ is not separable.

(ii).
 Let $G= {\rm PGL}_2$, $\Char(k)=2$, acting on $V= \pgl_2$ by conjugation.  Let $\ovl{A}$ denote the image in ${\rm PGL}_2$ of $A\in {\rm GL}_2$, and likewise for images of elements of $\gl_2$.  Define 
 $v \in V(k)$ by $v:= \overline{\left(
 \begin{array}{cc}
  0 & 1 \\
  x^2 & 0
 \end{array}
 \right)},$ where $x^2\in k$ but $x\not\in k$.  Then $v= g\tilde{v}g^{-1}$, where $g:= \overline{\left(
 \begin{array}{cc}
  1 & 0 \\
  x & 1
 \end{array}
 \right)}$ and $\tilde{v}:=
 \overline{\left(
 \begin{array}{cc}
  x & 1 \\
  0 & x
 \end{array}
 \right)} =  \overline{\left(
 \begin{array}{cc}
  0 & 1 \\
  0 & 0
 \end{array}
 \right)}.$
 Define $\lambda\in Y_k(G)$ by $\lambda(a)=  \overline{\left(
 \begin{array}{cc}
  a & 0 \\
  0 & a^{-1}
 \end{array}
 \right)}$.  Then $\lim_{a\to 0} \lambda(a)\cdot \tilde{v}=  0$,
so $G\cdot \tilde{v}$ is not closed and hence $G\cdot v$ is not closed.
Moreover, $G \cdot v$ is separable.
However, it is easily checked that $G(k)\cdot v$ is cocharacter-closed over $k$ (the unique Borel subgroup whose Lie algebra contains $v$ is not $k$-defined).
Note that $0$ lies in $\overline{G \cdot v}\setminus G \cdot v$, so $\overline{G \cdot v}\setminus G \cdot v$ does indeed have a $k$-point.
\end{exs}

\begin{rems}
\label{rems:hoff}
(i).
The non-separability of the orbit map $G \rightarrow G \cdot v$ in Examples \ref{ex:SL2,GL2}(i)
is because $\zz(\gg)$ is non-zero for $G=\SL_2$ in characteristic $2$,
whereas the group-theoretic centre $Z(G)$ vanishes.
However, as $\zz(\gg)$ consists of semisimple elements, this does not
affect the separability of the $R_u(P_\lambda)$-orbit through $v$.

In general, let $v \in V(k_s)$ such that $G_v(k_s)$ is $\Gamma$-stable,  
and suppose $\lambda \in Y_k(G)$ is such that $v':=\lim_{a\to0}\lambda(a)\cdot v$
exists and lies in $G\cdot v$. 
Further suppose that the orbit $R_u(P_\lambda)\cdot v$ is separable.
Then we may still conclude that $v' \in R_u(P_\lambda)(k)\cdot v$,
i.e., the assertion of Theorem \ref{thm:MCkdefinedstabilizer} holds
in this case.  Indeed, according to Theorem \ref{thm:Ruconj}, $v'\in R_u(P_\lambda)\cdot v$,
and $v'$ is a $k_s$-point.
Clearly, $H^1(\Gal(k_s/k_s),R_u(P_\lambda)_v)$ vanishes.
Due to the separability assumption, this means
that the map $R_u(P_\lambda)(k_s) \rightarrow (R_u(P_\lambda)\cdot v)(k_s)$
is surjective (see Remark \ref{rem:strongerrationality}(iv)),
hence $v' \in R_u(P_\lambda)(k_s) \cdot v$.
By Theorem \ref{thm:rupgalois}, 
$v' \in R_u(P_\lambda)(k) \cdot v$.

(ii).
Hoffmann asked the following question: let $v \in V(k)$ and suppose
that $\overline{G \cdot v} \setminus G \cdot v$ has a $k$-point. Does there
exist $\lambda \in Y_k(G)$ such that $v': = \lim_{a\to 0}\lambda(a) \cdot v$
exists and $v' \not\in G \cdot v$?
Clearly, the answer is no if $G(k) \cdot v$ is cocharacter-closed over $k$.
Example \ref{ex:SL2,GL2}(ii) shows that this is possible under the present 
hypotheses, even if we assume $G \cdot v$ is separable.
If we assume that $G(k)\cdot v$ is not cocharacter-closed over $k$ and that
$G_v^0$ is $k$-defined, the answer is yes, by Corollary \ref{cor:MCkdefinedstabilizer}(i).
\end{rems}

If $v'= \lim_{a\to 0} \lambda(a)\cdot v$ then we have seen there is a complicated relationship between the property that $v'$ lies in $G(k)\cdot v$ and the property that $v'$ lies in $R_u(P_\lambda)(k)\cdot v$.  If the latter holds then we can, for instance, apply Theorem~\ref{thm:rupgalois} and Proposition~\ref{prop:ruplevi}.  The following open question seems technical but resolving it is crucial to gaining a more complete understanding of the behavior of the $G(k)$-orbits. 

\begin{qn}
\label{qn:main}
Let $v\in V$ and $\lambda \in Y_k(G)$ 
such that $v':=\lim_{a \to 0} \lambda(a) \cdot v$ exists. 
Suppose that $v'$ is $G(k)$-conjugate to $v$. 
Is $v'$ then $R_u(P_\lambda)(k)$-conjugate to $v$?
\end{qn}

\begin{rems}
\label{rem:Ruconj}
(i). Theorem \ref{thm:Ruconj} implies that Question \ref{qn:main} 
has a positive answer
whenever $k$ is perfect.

(ii). By Theorem \ref{thm:MCkdefinedstabilizer},
Question \ref{qn:main} has a positive answer whenever $G_v^0$ is $k$-defined.
In particular, see Proposition \ref{prop:perfectorseparable} 
for separability assumptions that are sufficient to imply this. 

(iii).
Corollary \ref{cor:mainconjcocharclosed}
shows that 
Question \ref{qn:main} has an affirmative answer
for cocharacter-closed orbits.

(iv).
In the context of $G$-complete reducibility, the fact that
Question \ref{qn:main} has a positive answer for algebraically closed
fields has been used for instance
by Stewart (\cite[Cor.\ 3.6.2]{stewart}) and
Uchiyama (\cite[Prop.\ 3.6]{uchiyama}).  For another application, see \cite[Cor.~3.5]{GIT}.
\end{rems}

In order to study the question of whether $\prec$ is antisymmetric,
we also state the following weaker version of Question \ref{qn:main}.

\begin{qn}
\label{qn:weak}
Let $v\in V$ and $\lambda \in Y_k(G)$ 
such that $v':=\lim_{a \to 0} \lambda(a) \cdot v$ exists. 
Suppose that $v'$ is $G(k)$-conjugate to $v$. 
Is $v'$ then $G^0(k)$-conjugate to $v$?
\end{qn}

A positive answer to this question is related to antisymmetry as follows.

\begin{prop}
\label{prop:anti}
Suppose that the assertion of Question \ref{qn:weak} holds 
for any reductive group acting on an affine variety. Then the preorder
$\prec$ on $G(k)$-orbits is antisymmetric, hence is a partial order.
\end{prop}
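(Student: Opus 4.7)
The plan is to mirror the strategy of Corollary \ref{cor:MCkdefinedstabilizer}(v), with the hypothesised positive answer to Question \ref{qn:weak} playing the role that Corollary \ref{cor:MCkdefinedstabilizer}(i) plays there. Assume $G(k)\cdot v\prec G(k)\cdot w$ and $G(k)\cdot w\prec G(k)\cdot v$. Applying Lemma \ref{lem:transitiveclosure}(ii) and absorbing the intermediate $G(k)$-conjugations into the cocharacters in the standard way (that is, the equivariance trick $\lim_{a\to 0}(g\cdot\lambda)(a)\cdot(g\cdot v)=g\cdot\lim_{a\to 0}\lambda(a)\cdot v$ used in the preamble to Corollary \ref{cor:MCkdefinedstabilizer}(v)), I extract a chain $v=v_1,v_2,\ldots,v_{n+1}\in V$ together with cocharacters $\lambda_1,\ldots,\lambda_n\in Y_k(G)$ such that $v_{i+1}=\lim_{a\to 0}\lambda_i(a)\cdot v_i$, $v_{n+1}=g\cdot v$ for some $g\in G(k)$, and $w=g'\cdot v_j$ for some index $j$ and some $g'\in G(k)$.

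The first step of the argument is then the dimension argument copied verbatim from the proof of Corollary \ref{cor:MCkdefinedstabilizer}(v): since $v_1$ and $v_{n+1}$ lie in the same geometric orbit and each $v_{i+1}$ lies in $\overline{G\cdot v_i}$, no dimension drop can occur along the chain, so all the $v_i$ belong to the common geometric orbit $G\cdot v$. In particular $v_j$ (and hence $w$) lies in $G\cdot v$, and the remaining task is to upgrade this to $G(k)\cdot v=G(k)\cdot w$.

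To carry out the upgrade I would invoke the cocharacter-combining machinery that appears in the proof of Theorem \ref{thm:uniqueaccessible}(ii): by replacing each $\lambda_i$ with a suitable $R_u(P_{\lambda_i})(k)$-conjugate lying in a common $k$-defined maximally split maximal torus and then telescoping via Lemma \ref{lem:limitoflimit}, any initial segment $v_1,\ldots,v_\ell$ of the chain collapses to a single $k$-defined cocharacter $\mu_\ell$ satisfying $\lim_{a\to 0}\mu_\ell(a)\cdot v = u_\ell\cdot v_\ell$ for some $u_\ell\in R_u(P_{\mu_\ell})(k)$. For $\ell=n+1$ this produces $\mu\in Y_k(G)$ with $\lim_{a\to 0}\mu(a)\cdot v=(u\cdot g)\cdot v\in G(k)\cdot v$, and Question \ref{qn:weak} applied to the original $G$-action on $V$ then forces this limit to lie in $G^0(k)\cdot v$. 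Applying the same combining process to the cyclically rotated chain starting at $v_j$ (one can continue the chain past $v_{n+1}=g\cdot v$ by translating the earlier links to $g\cdot v_2,g\cdot v_3,\ldots$, yielding a cycle that closes up at a $G(k)$-conjugate of $v_j$), Question \ref{qn:weak} provides the analogous conclusion at each coordinate.

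The main obstacle, and in my view the technical crux of the proof, is bridging the gap between the natural hypothesis of Question \ref{qn:weak} (\emph{a priori} $G(k)$-conjugacy of the limit) and what the dimension argument actually provides (merely \emph{geometric} $G$-conjugacy along intermediate links). The only place where genuine $G(k)$-conjugacy is known is at the terminal identification $v_{n+1}=g\cdot v$, so one must use that equality as the single legitimate input to Question \ref{qn:weak} and then propagate the rational conjugacy throughout the chain by exploiting the cyclic/combining structure rather than by treating each link independently---this is the essential content of the argument and the point at which the full strength of the assumption that Question \ref{qn:weak} holds \emph{for any reductive group acting on an affine variety} (in particular applied to diagonal actions on products $V^r$, in the spirit of Theorem \ref{thm:k-point}) is to be used.
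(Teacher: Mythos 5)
Your setup is on target: you correctly reduce, via Lemma \ref{lem:transitiveclosure}, to a chain $v=v_1,\ldots,v_{n+1}$ with $v_{i+1}=\lim_{a\to 0}\lambda_i(a)\cdot v_i$, $v_{n+1}=g\cdot v$ ($g\in G(k)$), $w=g'\cdot v_j$; and you correctly sense both that the hypothesis on Question \ref{qn:weak} must be used in its full generality and that a product $V^r$ with its diagonal action has to enter. But the execution you propose does not close the argument, and the key move is missing.

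The cocharacter-combining machinery from the proof of Theorem \ref{thm:uniqueaccessible}(ii) is the wrong tool here, and the opening dimension argument is not needed. Telescoping the chain into a single $\mu\in Y_k(G)$ with $\lim_{a\to 0}\mu(a)\cdot v\in G(k)\cdot v$, and then applying Question \ref{qn:weak} to $G$ acting on $V$, only tells you something about the \emph{endpoint} of the chain (which you already know up to $G(k)$-conjugacy); it throws away exactly the intermediate coordinate $v_j$ that you need to locate in $G(k)\cdot v$. Rotating the chain and re-combining does not recover this: at each rotation you get a statement of the form ``$\lim\mu'(a)\cdot v_j\in G^0(k)\cdot v_j$,'' which again says nothing about whether $v_j\in G(k)\cdot v$. (Moreover, carrying out the combining rigorously along a chain whose links land only up to $R_u(P)$-conjugacy requires the very control that cocharacter-closedness provided in Theorem \ref{thm:uniqueaccessible}(ii), and which is unavailable here.)

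The ingredient you are missing is the specific auxiliary \emph{non-connected} group. The paper works with $\tuple{G}=C_n\ltimes G^n$, where $C_n=\langle\sigma\rangle$ is the cyclic group permuting the $G$-factors cyclically (not the symmetric group of Theorem \ref{thm:k-point}), acting on $V^n$, together with $\tuple{v}=(v_1,\ldots,v_n)$ and $\tuple\lambda=(\lambda_1,\ldots,\lambda_n)\in Y_k(G^n)=Y_k(\tuple{G})$. The point of the cyclic shift is that
\[
\lim_{a\to 0}\tuple\lambda(a)\cdot\tuple{v}=(v_2,\ldots,v_n,g\cdot v)=\bigl((1,\ldots,1,g)\sigma\bigr)\cdot\tuple{v},
\]
so the limit is a $\tuple{G}(k)$-translate of $\tuple{v}$ — this is the single legitimate input to Question \ref{qn:weak}, applied once to $\tuple{G}$ acting on $V^n$. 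Since $\tuple{G}^0=G^n$, the conclusion gives $\tuple{u}=(u_1,\ldots,u_n)\in G(k)^n$ with $\tuple{u}\cdot\tuple{v}=(v_2,\ldots,v_n,g\cdot v)$, and then each coordinate yields $u_i\cdot v_i=v_{i+1}$, so $v_j=(u_{j-1}\cdots u_1)\cdot v$ and $w=g'\cdot v_j\in G(k)\cdot v$. It is precisely the semidirect product with $C_n$ that converts the geometric chain into a single rational-conjugacy statement without losing the intermediate coordinates; without it the shifted tuple is not a $(G^n)(k)$-translate of $\tuple{v}$ and the hypothesis does not apply. This is also why the remark following the proposition stresses that one needs Question \ref{qn:weak} for all $C_n\ltimes G^n$, and why the statement does not become trivial for connected $G$.
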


\begin{proof}
Suppose we have $v,v' \in V$ with $G(k) \cdot v' \prec G(k) \cdot v \prec G(k) \cdot v'$.
By Lemma \ref{lem:transitiveclosure} we can find
$v_1,\dots,v_n \in V$, $\lambda_1,\dots,\lambda_n \in Y_k(G)$ such that
all limits $\lim_{a\to 0}\lambda_i(a) \cdot v_i$ ($1 \leq i \leq n$) exist,
$v=v_1$, $v_i = \lim_{a\to 0}\lambda_{i-1}(a) \cdot v_{i-1}$ for
$2 \leq i \leq n$, $v' = g' \cdot v_j$ for some $1 \leq j \leq n$, $g' \in G(k)$ and
$\lim_{a \to 0}\lambda_n(v_n) = g \cdot v$ for some $g \in G(k)$.
Now let $\tuple{v}=(v_1,\dots,v_n) \in V^n$ and let
$\tuple{G} = C_n \ltimes G^n$.
Here $C_n = \langle \sigma \rangle$ denotes the cyclic group of order $n$ acting on $G^n$ by permuting the indices cyclically ($\sigma(i)= i+1\ {\rm mod}\ n$).
Let $\tuple{\lambda} = (\lambda_1,\dots,\lambda_n)
\in Y_k(G^n) = Y_k(\tuple{G})$.

Then $\tuple{G}$ acts naturally on $V^n$, and
\begin{align*}
\lim_{a\to 0}\tuple{\lambda}(a) \cdot \tuple{v} &=
(\lim_{a\to 0} \lambda_1(a)\cdot v_1, \dots, \lim_{a\to 0} \lambda_n(a) \cdot v_n) \\
&= (v_2, \dots, v_n, g \cdot v)
= ((1,\dots,1,g)\sigma) \cdot \tuple{v} \in \tuple{G}(k) \cdot \tuple{v}.
\end{align*}

Since we assume that Question \ref{qn:weak} has an affirmative answer for $\tuple{G}$, we can find
$\tuple{u} = (u_1,\dots,u_n) \in \tuple{G}^0(k) = G(k)^n$
such that
$\tuple{u} \cdot \tuple{v} = (v_2,\dots,v_n, g\cdot v)$.
Hence
$v' = g' \cdot v_j = (g'u_{j-1}\cdots u_1) \cdot v \in G(k) \cdot v$.
We conclude that $G(k) \cdot v = G(k) \cdot v'$.
\end{proof}

\begin{rem}
Note that the proof of Proposition \ref{prop:anti} requires that
Question \ref{qn:weak} has an affirmative answer for all groups of the form
$C_n \ltimes G^n$ acting on $V^n$, $n \in \NN$. In particular, even though
the assertion of Question \ref{qn:weak} holds trivially for connected $G$, this does
not imply the antisymmetry of $\prec$ for connected $G$.
\end{rem}

In order to obtain a converse of Proposition \ref{prop:anti}, we
need the following lemma.

\begin{lem}
\label{lem:connectedaccessible}
Let $v \in V$, $\lambda \in Y_k(G)$.  Suppose that
$v' = \lim_{a \to 0} \lambda(a) \cdot v$ exists and $v' = g \cdot v$
for some $g \in G(k)$.
Then $G^0(k) \cdot v$ is accessible from $G^0(k) \cdot v'$.
\end{lem}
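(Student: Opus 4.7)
The plan is to construct a finite chain of $k$-defined cocharacter limits starting at $v'$ and ending at a point of $G^0(k)\cdot v$, by iterated conjugation of $\lambda$ by powers of an element $g \in G(k)$ realizing $v' = g\cdot v$. The length of the chain will be controlled by the order of $g$ modulo $G^0(k)$, which is finite since $G/G^0$ is finite.

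First, I would fix such a $g$ and let $n \geq 1$ be the smallest integer with $g^n \in G^0(k)$; this exists because $G(k)/G^0(k)$ is finite. Next, for $i = 1, \ldots, n-1$, I would form the conjugate cocharacter $\mu_i := g^i \cdot \lambda \in Y_k(G) = Y_k(G^0)$; this is $k$-defined because $g^i \in G(k)$ and the conjugation action of $G(k)$ on $Y_k(G)$ preserves the $k$-structure. Equivariance of the limit map then yields
\[
\lim_{a\to 0} \mu_i(a) \cdot (g^i v) \;=\; g^i \cdot \lim_{a\to 0} \lambda(a) \cdot v \;=\; g^i \cdot v' \;=\; g^{i+1} v.
\]
Setting $v_i := g^i v$, so that $v_1 = v'$ and $v_n = g^n v \in G^0(k)\cdot v$, this exhibits each $G^0(k)\cdot v_{i+1}$ as $1$-accessible from $G^0(k)\cdot v_i$ in the sense of Definition~\ref{def:accessible} (applied to the $G^0$-action), and hence $G^0(k)\cdot v$ as $(n-1)$-accessible from $G^0(k)\cdot v'$.

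Finally, I would dispose of the degenerate case $n = 1$, i.e., $g \in G^0(k)$, in which $G^0(k)\cdot v = G^0(k)\cdot v'$: here $1$-accessibility is achieved in a single step via the cocharacter $g \cdot \lambda$, whose limit at $v'$ equals $g^2 v$, an element of $G^0(k)\cdot v$ since $g^2 \in G^0(k)$.

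There is no substantial obstacle here; the only conceptual point is to recognize that the discrepancy between $G(k)$-conjugacy and $G^0(k)$-conjugacy is controlled by the finite component group, and that this discrepancy can be absorbed by iterating the given cocharacter along the cyclic subgroup generated by $g$ in $G(k)/G^0(k)$. The equivariance computation above then produces the chain mechanically.
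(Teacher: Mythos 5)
Your proof is correct and follows essentially the same route as the paper: pick $n$ with $g^n\in G^0(k)$ and chain together the limits along the conjugate cocharacters $g^i\cdot\lambda$ to pass from $v'=g\cdot v$ through $g^2v, g^3v,\dots$ to $g^n v\in G^0(k)\cdot v$. The only cosmetic difference is that you make the indexing explicit and treat $n=1$ separately (which is in fact unnecessary, since accessibility is reflexive by taking the trivial cocharacter).
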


\begin{proof}
As $G/G^0$ is finite, there exists some $n\in \NN$ such that $g^n \in G^0(k)$.
Taking the limit along $g \cdot \lambda$ maps $v'=g \cdot v$
to $g \cdot v' = g^2 \cdot v$, and then taking the limit along $g^2 \cdot \lambda$ maps 
$g^2 \cdot v$ to $g^2 \cdot v' = g^3 \cdot v$.
Continuing in this way, we see that the orbit through $g^n \cdot v$ is 
accessible from $v'$. As $g^n \in G^0(k)$, this implies that
$G^0(k) \cdot v$ is accessible from $G^0(k) \cdot v'$, as claimed.
\end{proof}

\begin{prop}
\label{prop:anti2}
Suppose that the preorder $\prec$ is antisymmetric on the set of 
$G^0(k)$-orbits in $V$. Then the assertion of Question \ref{qn:weak}
holds for $G$.
\end{prop}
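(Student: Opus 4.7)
The plan is to exploit the two directions of accessibility between $G^0(k)\cdot v$ and $G^0(k)\cdot v'$ and invoke the antisymmetry hypothesis. Since cocharacters always evaluate in the identity component, we have $Y_k(G) = Y_k(G^0)$, so a $k$-defined cocharacter for $G$ may equally well be viewed as one for $G^0$. This is the fundamental observation that lets us work entirely at the level of $G^0(k)$-orbits and apply the given hypothesis.

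Suppose $v \in V$, $\lambda \in Y_k(G)$, and $v' = \lim_{a\to 0}\lambda(a)\cdot v$ exists with $v' = g\cdot v$ for some $g \in G(k)$. Viewing $\lambda$ as an element of $Y_k(G^0)$, the existence of the limit immediately gives $G^0(k)\cdot v' \prec G^0(k)\cdot v$ (with the preorder computed with respect to the $G^0$-action). Conversely, Lemma \ref{lem:connectedaccessible} applies verbatim to our setup and produces a chain of accessibility starting at $v'$ and ending at some point of $G^0(k)\cdot v$, i.e.\ $G^0(k)\cdot v \prec G^0(k)\cdot v'$ by Lemma \ref{lem:transitiveclosure}(ii).

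With both $\prec$-relations in hand, the antisymmetry hypothesis on $G^0(k)$-orbits forces $G^0(k)\cdot v = G^0(k)\cdot v'$, which is exactly the conclusion of Question \ref{qn:weak}. I do not anticipate a serious obstacle: the only subtlety is keeping straight that the preorder $\prec$ in the hypothesis refers to the one induced by the $G^0$-action (so that $Y_k(G^0)$-defined cocharacters are the admissible ones), but since $Y_k(G^0)=Y_k(G)$ this distinction is vacuous on the level of cocharacters and the argument goes through cleanly.
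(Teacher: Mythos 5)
Your proof is correct and takes essentially the same route as the paper: one direction of the accessibility relation is immediate from the existence of the limit, the other comes from Lemma~\ref{lem:connectedaccessible}, and antisymmetry of $\prec$ on $G^0(k)$-orbits finishes the argument. The explicit remark that $Y_k(G)=Y_k(G^0)$ is a useful clarification but does not change the substance of the argument.
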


\begin{proof}
Let $v \in V$, $\lambda \in Y_k(G)$ and suppose that
$v' = \lim_{a \to 0} \lambda(a) \cdot v$ exists and $v' = g \cdot v$
with $g \in G(k)$.
By Lemma \ref{lem:connectedaccessible}, $G^0(k) \cdot v$ is accessible
from $G^0(k) \cdot v'$. Conversely, it is immediate that $G^0(k) \cdot v'$ is 
accessible from $G^0(k) \cdot v$.
By the antisymmetry of $\prec$, we conclude that $G^0(k) \cdot v =
G^0(k) \cdot v'$. Hence $v' \in G^0(k) \cdot v$, as required.
\end{proof}

Combining Propositions \ref{prop:anti} and \ref{prop:anti2} we obtain
the following result.

\begin{cor}
\label{cor:anti}
The following are equivalent (for fixed $k$):
\begin{itemize}
\item[(i)] The assertion of Question \ref{qn:weak} holds for any reductive group acting
on any affine variety;
\item[(ii)] the preorder $\prec$ is antisymmetric for any reductive group acting on
any affine variety;
\item[(iii)] the preorder $\prec$ is antisymmetric for any connected reductive group acting
on any affine variety.
\end{itemize}
\end{cor}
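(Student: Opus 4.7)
The plan is to deduce the equivalence by chaining the two preceding propositions together with the trivial specialization from reductive to connected reductive groups. Concretely, I would organize the cycle of implications as $(i) \Rightarrow (ii) \Rightarrow (iii) \Rightarrow (i)$.

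For $(i) \Rightarrow (ii)$, I would simply invoke Proposition \ref{prop:anti}: if Question \ref{qn:weak} has an affirmative answer for every reductive group acting on an affine variety, then in particular it holds for all the auxiliary groups $C_n \ltimes G^n$ built in the proof of that proposition, so antisymmetry of $\prec$ on the $G(k)$-orbits follows for every reductive $G$ acting on an affine $V$.

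The implication $(ii) \Rightarrow (iii)$ is immediate, since a connected reductive group acting on an affine variety is a special instance of a reductive group acting on an affine variety, so antisymmetry of $\prec$ in the general case automatically yields antisymmetry in the connected case.

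For $(iii) \Rightarrow (i)$, let $G$ be an arbitrary reductive $k$-group acting on the affine $k$-variety $V$. Its identity component $G^0$ is connected reductive and also acts on $V$, so hypothesis $(iii)$ tells us that $\prec$ is antisymmetric on the set of $G^0(k)$-orbits in $V$. Proposition \ref{prop:anti2} then applies directly and yields an affirmative answer to Question \ref{qn:weak} for $G$, completing the cycle. No step should present any obstacle beyond correctly quoting the preceding two propositions; the only mild subtlety is noticing that Proposition \ref{prop:anti2} is precisely stated in terms of antisymmetry on $G^0(k)$-orbits, which is exactly the output of $(iii)$ applied to $G^0$.
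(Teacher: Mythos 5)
Your proof is correct and matches the paper's intended argument exactly: the paper states the corollary immediately after Propositions \ref{prop:anti} and \ref{prop:anti2} with the remark ``Combining Propositions \ref{prop:anti} and \ref{prop:anti2} we obtain the following result,'' and the cycle $(i)\Rightarrow(ii)\Rightarrow(iii)\Rightarrow(i)$ you describe is precisely that combination, with the trivial specialization for $(ii)\Rightarrow(iii)$ and the application of Proposition \ref{prop:anti2} to $G^0$ for $(iii)\Rightarrow(i)$.
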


\section{Reduction to ${\rm GL}_n$}
\label{sec:reduction}

In this section we use Theorem~\ref{thm:MCkdefinedstabilizer} 
to prove a result which reduces certain questions involving 
$G(k)$-orbits to the special case when $G= {\rm GL}_n$ for some $n$.  
We start by recalling a standard construction from GIT, cf.~\cite[5.3]{BR}.  
Let $G,M$ be reductive $k$-groups with $G\subseteq M$ and let $V$ 
be an affine $G$-variety over $k$.  We define an action of $G$ on 
$M\times V$ by $g\cdot (m,v) = (mg^{-1}, g\cdot v)$.  
Write $M\times^G V$ for the quotient of $M\times V$ by this $G$-action 
and let $\pi_G\colon M\times V\ra M\times^G V$ be the canonical projection.  
Then $\pi_G$ is $k$-defined \cite[2.2]{BR}.  
The group $M$ acts on $M\times V$ by left multiplication on the 
first factor and trivially on the second factor; this descends 
to give an action of $M$ on $M\times^G V$.  If $G= M$ then there 
is an obvious $k$-defined $G$-equivariant isomorphism from $G\times^G V$ to $V$.  
Projection onto the first factor gives a $k$-defined $M$-equivariant 
morphism $\eta\colon M\times^G V\ra M/G$, where $M$ acts on the coset 
space $M/G$ by left multiplication.

\begin{thm}
\label{thm:reduction}
 Let $G,M$ be reductive $k$-groups with $G\subseteq M$ 
and let $V$ be an affine $G$-variety over $k$.  
Let $k'/k$ be an extension of fields.  
Set $W= M\times^G V$ and define $\phi\colon V\ra W$ by $\phi(v)= \pi_G(1,v)$ (note that $\phi$ is a $k$-defined $G$-equivariant closed embedding: see \cite{BR}, for example).  
Then the following hold:
\begin{itemize}
\item[(i)]  
for all $v\in V$ and all $\lambda\in Y_{k'}(M)$, if $\lambda$ destabilizes $\phi(v)$ then $u\cdot \lambda\in Y_{k'}(G)$ for some $u\in R_u(P_\lambda(M))(k')$;
\item[(ii)]  
for all $v\in V$ and all $\lambda\in Y_{k'}(G)$, if $v':= \lim_{a\to 0}\lambda(a)\cdot v$ exists and $m\cdot \phi(v) = \phi(v')$ for some $m\in M(k')$ then $m\in G(k')$;
\item[(iii)]  
$M_{\phi(v)} = G_{\phi(v)}= G_v$;
\item[(iv)] 
for all $v\in V$ and all $\lambda\in Y_{k'}(G)$, $\lambda$ destabilizes $v$ if and only if $\lambda$ destabilizes $\phi(v)$;
\item[(v)]  
for all $v\in V$ and all $\lambda\in Y_{k'}(G)$, $\lambda$ properly destabilizes $v$ over $k'$ for $G$ if and only if $\lambda$ properly destabilizes $\phi(v)$ over $k'$ for $G$ if and only if $\lambda$ properly destabilizes $\phi(v)$ over $k'$ for $M$;
\item[(vi)]  
for all $v\in V$, $G(k')\cdot v$ is cocharacter-closed over $k'$ if and only if $G(k')\cdot \phi(v)$ is cocharacter-closed over $k'$ if and only if $M(k')\cdot \phi(v)$ is cocharacter-closed over $k'$.
\end{itemize}
\end{thm}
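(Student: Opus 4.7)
The plan is to work through the $M$-equivariant projection $\eta\colon W \to M/G$, which is $k$-defined with $\eta^{-1}(1\cdot G) = \phi(V)$, together with the defining $G$-equivalence $(m,v)\sim_G(mg^{-1}, g\cdot v)$ on $M\times V$. Parts (ii), (iii), and (iv) come out of direct unwindings. For (iii): $m\cdot\phi(v) = \phi(v)$ means $(m,v) = (g^{-1}, g\cdot v)$ for some $g\in G$, so $m = g^{-1}\in G$ with $m\cdot v = v$; the analogous computation gives $G_{\phi(v)} = G_v$, and together $M_{\phi(v)} = G_{\phi(v)} = G_v$. For (ii): the same unwinding of $m\cdot\phi(v) = \phi(v')$ places $m \in G(\ovl k)$; combined with $m\in M(k')$ and $G$ being $k$-defined and closed in $M$, this gives $m\in G(k')$. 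For (iv): $\phi$ is a $G$-equivariant closed embedding and so carries limits forward; conversely, for $\lambda\in Y_{k'}(G)$, $\eta$ sends $\lim\lambda(a)\phi(v)$ (if it exists) to $\lim\lambda(a)G = G$, so the limit lies in the closed subvariety $\phi(V)$ and pulls back to a limit in $V$.

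The crux is (i). Let $\lambda\in Y_{k'}(M)$ destabilize $\phi(v)$. Applying $\eta$, the limit $m_0 G := \lim\lambda(a)\cdot(1\cdot G)$ exists in $M/G$ and is $\IM(\lambda)$-fixed, so $m_0^{-1}\cdot\lambda\in Y(G)$. It suffices to show that $m_0$ can be chosen in $R_u(P_\lambda(M))(k')\cdot G$: writing $m_0 = u^{-1}h$ with $u\in R_u(P_\lambda(M))(k')$ and $h\in G$ yields $u\cdot\lambda = h\cdot(m_0^{-1}\cdot\lambda)\in Y(G)$, and this lies in $Y_{k'}(G)$ because $u$ and $\lambda$ are $k'$-defined. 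Over $\ovl{k}$ the claim is a Bialynicki-Birula-style statement about the $\lambda$-attractor cell of $m_0 G$ in the smooth $M$-variety $M/G$: the natural map $\Lie R_u(P_\lambda(M)) \to T_{m_0 G}(M/G) = \mathfrak{m}/\Ad(m_0)\mathfrak{g}$ surjects onto the positive $\lambda$-weight subspace, so $R_u(P_\lambda(M))\cdot m_0 G$ is an $R_u(P_\lambda(M))$-stable subvariety of the irreducible attractor cell of the same dimension, hence fills it out, and this cell contains $1\cdot G$. The main obstacle is rationality: to produce $u$ over $k'$ one uses that $R_u(P_\lambda(M))$ is $k'$-split unipotent via the $\lambda$-weight filtration, that $m_0 G$ and $1\cdot G$ are both $k'$-points of $M/G$, and vanishing of $H^1$ over $k'$ for split unipotent groups---a step requiring particular care when $k'$ is imperfect and one must handle the scheme-theoretic stabilizer of $m_0 G$ in $R_u(P_\lambda(M))$.

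Given (i), parts (v) and (vi) follow by assembling the pieces. For (v), proper destabilization of $v$ and of $\phi(v)$ for $G$ are equivalent by (iv) and injectivity of $\phi$, while the equivalence of $M$- versus $G$-destabilization at $\phi(v)$ invokes (ii): an $M(k')$-element carrying the limit to $\phi(v)$ is forced into $G(k')$. For (vi), the first iff is (v) specialized to $Y_{k'}(G)$, and the $M$-to-$G$ direction of the second iff repeats (ii). For the remaining forward direction, let $\mu\in Y_{k'}(M)$ destabilize $\phi(v)$ to $w_0$; by (i), pick $u\in R_u(P_\mu(M))(k')$ with $\tilde\lambda := u\cdot\mu\in Y_{k'}(G)$. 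Since $u^{-1}\in R_u(P_\mu(M))$, one has $\lim_{a\to 0}\mu(a)u^{-1}\mu(a)^{-1} = 1$, and direct calculation gives $\lim_{a\to 0}\mu(a)\cdot(u^{-1}\phi(v)) = w_0$, hence $\lim_{a\to 0}\tilde\lambda(a)\cdot\phi(v) = u\cdot w_0$. Cocharacter-closedness of $G(k')\cdot\phi(v)$ then provides $g\in G(k')$ with $u\cdot w_0 = g\cdot\phi(v)$, so $w_0 = (u^{-1}g)\cdot\phi(v)\in M(k')\cdot\phi(v)$, as required.
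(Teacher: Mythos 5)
Your parts (ii)--(vi) are essentially correct. The direct unwinding of the equivalence relation on $M\times V$ for (ii) and (iii) is a clean alternative to the paper's route (the paper deduces (ii) from the $M$-equivariance of $\eta$ and deduces (iii) by taking $\lambda=0$ in (ii)), and your derivation of (v) and (vi) from (i), (ii) and (iv) matches the paper's ``follow immediately from (i)--(ii)'' in substance. Your detour via $u\cdot w_0$ to prove the last implication of (vi) is also correct: the auxiliary fact $\lim_{a\to0}\mu(a)\cdot(u^{-1}\cdot\phi(v))=w_0$ is exactly \cite[Lem.\ 2.12]{GIT}.

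The real issue is part (i), where your argument and the paper's diverge and where your sketch has a genuine gap that you flag but do not resolve. The paper's proof is very short and leverages the rationality machinery built earlier in the paper: the $M$-orbit of $x=1\cdot G$ in $M/G$ is Zariski-closed and has smooth, $k$-defined stabilizer $M_x=G$, so $M(k)\cdot x$ is cocharacter-closed over $k$ by Corollary~\ref{cor:MCkdefinedstabilizer}(ii); then Corollary~\ref{cor:mainconjcocharclosed} hands you $u\in R_u(P_\lambda(M))(k)$ with $u\cdot x'=x$, and $u\cdot\lambda$ fixes $x$, hence lies in $Y_k(G)$. These two corollaries were designed precisely to avoid the Galois-cohomology issues over imperfect fields that you run into. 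Your Bia{\l}ynicki-Birula argument over $\ovl{k}$ is fine, but the rationality step --- finding $u\in R_u(P_\lambda(M))(k')$ with $u\cdot(m_0G)=1\cdot G$ --- is where you stop short. The concern you raise is real, but it is in fact resolvable: the scheme-theoretic stabilizer of $m_0G$ in $R_u(P_\lambda(M))$ is $R_u(P_\lambda(M))\cap m_0Gm_0^{-1}$, and $\lambda$ normalizes $m_0Gm_0^{-1}$ (because $m_0^{-1}\cdot\lambda\in Y(G)$), so Proposition~\ref{prop:smoothintersection} shows this intersection is smooth, equal to $R_u(P_\lambda(m_0Gm_0^{-1}))$, and hence $k'$-split unipotent; then $H^1$ vanishes and the orbit map is separable, so you do get a $k'$-rational $u$. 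You did not make this observation, and without it your proof of (i) does not go through. So: a viable alternative route, but one that requires supplying the smoothness argument via Proposition~\ref{prop:smoothintersection} --- whereas the paper sidesteps the whole issue by reusing Corollaries~\ref{cor:MCkdefinedstabilizer} and~\ref{cor:mainconjcocharclosed}.

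One minor omission: the paper first reduces to $k'=k$ by extending scalars, which you skip; this has no mathematical consequence but streamlines the notation.
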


\begin{proof}
 By extending scalars, we can regard $G$ and $M$ as $k'$-groups and $V$ as a $k'$-defined $G$-variety.  Hence we can assume without loss that $k'= k$.
 
 Now let $v\in V$, let $\lambda\in Y_k(M)$ and suppose $\lambda$ 
destabilizes $\phi(v)$.  Let $x$ be the coset $G\in M/G$.  
Since the map $\eta\colon M\times^G V\ra M/G$ induced by projection onto the first factor is $M$-equivariant, 
$\lambda$ destabilizes $\eta(\phi(v))= x$.  
Set $x'= \lim_{a\to 0} \lambda(a)\cdot x$.  
The orbit $M\cdot x$ is closed---it is the whole of $M/G$---and $M_x= G$ 
is $k$-defined, so $M(k)\cdot x$ is cocharacter-closed over $k$, 
by Corollary~\ref{cor:MCkdefinedstabilizer}(ii).  
Hence by Corollary~\ref{cor:mainconjcocharclosed}, 
there exists $u\in R_u(P_\lambda(M))(k)$ such that $x= u\cdot x'$.  
Then $u\cdot \lambda$ fixes $x$, so $u\cdot \lambda$ is a cocharacter 
of $M_x= G$.  This proves (i).  If we assume in addition that $\lambda$ 
is already a cocharacter of $G$ then $x'= x$; hence if $m\in M(k)$ 
and $m\cdot \phi(v) = \phi(v')$ 
then $m\cdot x= x$ by the $M$-equivariance of $\eta$, so $m\in G(k)$ 
and (ii) follows.  This also proves part (iii) (take $\lambda= 0$ in (ii)).
 
Parts (iv)--(vi) now follow immediately from (i)--(ii).
\end{proof}

We now see that to prove Galois ascent and to answer Question~\ref{qn:main}, 
it suffices to consider the special case $G= \GL_n$.

\begin{cor}
\label{cor:reduction}
\begin{itemize}
\item[(i)] 
If the answer to Question~\ref{qn:main} is yes when $G= \GL_n$ 
then the answer is yes for all $G$.
\item[(ii)]  
If Galois ascent holds when $G= \GL_n$ then it holds for all $G$.
\end{itemize}
\end{cor}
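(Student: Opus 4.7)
The plan is to use Theorem~\ref{thm:reduction} as a translation device. For the given reductive $k$-group $G$, a faithful $k$-defined rational representation provides a $k$-defined closed embedding $G\subseteq M:=\GL_n$ for some $n$, and I will work inside $W=M\times^G V$ via the $k$-defined $G$-equivariant closed embedding $\phi\colon V\to W$ of Theorem~\ref{thm:reduction}.

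Part (ii) is essentially immediate. Given $v\in V$ and a field extension $k'/k$ such that $G(k)\cdot v$ is cocharacter-closed over $k$, I would apply Theorem~\ref{thm:reduction}(vi) to transfer this property to $M(k)\cdot\phi(v)$, then use the assumed Galois ascent for $\GL_n$ to conclude that $M(k')\cdot\phi(v)$ is cocharacter-closed over $k'$, and finally apply Theorem~\ref{thm:reduction}(vi) a second time (now over $k'$) to return this conclusion to $G(k')\cdot v$.

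For part (i), suppose Question~\ref{qn:main} has a positive answer for $\GL_n$. Given $v\in V$ and $\lambda\in Y_k(G)$ with $v':=\lim_{a\to 0}\lambda(a)\cdot v$ existing and $v'=g\cdot v$ for some $g\in G(k)$, I would view $\lambda$ as a cocharacter of $M$ and observe, using the $G$-equivariance of $\phi$, that $\lim_{a\to 0}\lambda(a)\cdot\phi(v)=\phi(v')=g\cdot\phi(v)$ with $g\in M(k)$. The hypothesis on $\GL_n$ then yields some $u\in R_u(P_\lambda(M))(k)$ with $u\cdot\phi(v)=\phi(v')$. At this stage I would invoke Theorem~\ref{thm:reduction}(ii) to force $u\in G(k)$, and combine this with Proposition~\ref{prop:smoothintersection} (applied to $G\subseteq M$ with $\lambda\in Y(G)$ normalising $G$) to identify $R_u(P_\lambda(M))\cap G$ with $R_u(P_\lambda(G))$. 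This places $u$ in $R_u(P_\lambda)(k)$, and the injectivity of $\phi$ then gives $u\cdot v=v'$, as required.

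I do not expect a serious obstacle: the real content lies in Theorem~\ref{thm:reduction} itself, and the only subtlety in the present argument is verifying that the unipotent element produced inside $M$ actually lands in $R_u(P_\lambda(G))(k)$, which is handled cleanly by Theorem~\ref{thm:reduction}(ii) combined with Proposition~\ref{prop:smoothintersection}.
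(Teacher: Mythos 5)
Your proposal is correct and follows essentially the same route as the paper: both parts are read off from Theorem~\ref{thm:reduction}, with part~(ii) transferred via Theorem~\ref{thm:reduction}(vi) and part~(i) via Theorem~\ref{thm:reduction}(ii). The paper phrases the Galois-ascent transfer contrapositively (starting from a non-cocharacter-closed $G(k_s)$-orbit) and leaves part~(i) as a one-line citation, whereas you argue directly and spell out the $R_u(P_\lambda)$-bookkeeping for part~(i), but the content and the tools used are the same.
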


\begin{proof}
 We can embed $G$ as a $k$-defined subgroup of some $\GL_n$.  
Define $\phi$ as in Theorem~\ref{thm:reduction}.  
Part (i) now follows from Theorem~\ref{thm:reduction}.  
For part (ii), suppose $G(k_s)\cdot v$ is not cocharacter-closed over $k_s$.  
We want to show that $G(k)\cdot v$ is not cocharacter-closed over $k$.  
Now $\GL_n(k_s)\cdot \phi(v)$ 
is not cocharacter-closed over $k_s$, by Theorem~\ref{thm:reduction}(vii) 
(with $k'= k_s$).  By assumption, Galois ascent holds for $\GL_n$, 
so $\GL_n(k)\cdot \phi(v)$ is not cocharacter-closed over $k$.  
The result now follows from Theorem~\ref{thm:reduction}(vii) (with $k'= k$).
\end{proof}

\section{$G$-complete reducibility}
\label{sec:Gcr}

In this section we apply our previous results 
to the theory of $G$-completely reducible 
subgroups over $k$.  This was the original motivation for much of 
our work, and is an important source of examples.  The notion of a $G$-completely reducible subgroup was introduced by Serre \cite{serre2}; there are applications to the subgroup structure of reductive groups \cite{liebeckseitz0}, \cite{liebeckseitz}, spherical buildings \cite{serre1.5} and the Langlands correspondence \cite[Sec.~13]{lafforgue}.  For more details, see \cite{BMR} and \cite{serre2}.

First we recall 
the relevant definitions, then we explain the link with geometric 
invariant theory.

\begin{defn}
\label{def:gcr}
A subgroup $H$ of $G$ is said to be \emph{$G$-completely reducible ($G$-cr)}
if whenever $H$ is contained in an R-parabolic subgroup $P$ of $G$,
there exists an R-Levi subgroup of $P$ containing $H$.
Similarly, a subgroup $H$ of $G$ is said to be
\emph{$G$-completely reducible over $k$}
if whenever $H$ is contained in a $k$-defined
R-parabolic subgroup $P$ of $G$,
there exists a $k$-defined R-Levi subgroup of $P$ containing $H$.
\end{defn}

In \cite[Cor.\ 3.7]{BMR}, we show that $G$-complete 
reducibility has a geometric
interpretation in terms of the action of $G$ on $G^n$, the $n$-fold
Cartesian product of $G$ with itself, by simultaneous conjugation.

Let $\mathbf{h} \in G^n$
and let $H$ be the algebraic subgroup of $G$ generated by $\tuple{h}$,
i.e., by the components of the $n$-tuple $\tuple{h}$.
Then $G\cdot\mathbf{h}$ is closed in $G^n$ if and only if
$H$ is $G$-cr \cite[Cor.\ 3.7]{BMR}.
To generalize this to subgroups that are not topologically finitely
generated, we employed the following concept \cite[Def.\ 5.4]{GIT}.

\begin{defn}
\label{def:generictuple}
Let $H$ be a subgroup of $G$ and let $G\hookrightarrow\GL_m$
be an embedding of algebraic groups.
Then $\tuple{h} \in H^n$ is called a
\emph{generic tuple of $H$ for the embedding $G\hookrightarrow\GL_m$}
if $\tuple{h}$ generates the associative subalgebra of $\Mat_m$
spanned by $H$. We call $\tuple{h}\in H^n$ a \emph{generic tuple of $H$}
if it is a generic tuple of $H$ for some embedding $G\hookrightarrow\GL_m$.
\end{defn}

Note that generic tuples exist for any embedding $G\hookrightarrow\GL_m$ 
provided $n$ is sufficiently large. 
We give a characterization of $G$-complete reducibility over $k$ 
in terms of geometric invariant theory.

\begin{thm}
\label{thm:crvscocharclosed}
Let $H$ be a subgroup of $G$ and let $\tuple{h}\in H^n$ 
be a generic tuple of $H$.
Then $H$ is $G$-completely reducible over $k$ if and only if
$G(k) \cdot \tuple{h}$ is cocharacter-closed over $k$.
\end{thm}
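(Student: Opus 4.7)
The plan is to adapt the proof of \cite[Thm.\ 5.9]{GIT} (the connected case) to arbitrary reductive $G$, leaning on the new non-connected tools developed in this paper---chiefly Corollary \ref{cor:mainconjcocharclosed} (which removes the connectedness assumption from \cite[Thm.\ 3.10]{GIT}) and Corollary \ref{cor:conn_nonconn} (which reduces cocharacter-closedness for $G$ to cocharacter-closedness for $G^0$). Throughout I use the generic-tuple dictionary $H\subseteq P_\lambda \iff \tuple{h}\in P_\lambda^n$ and $H\subseteq L_\lambda \iff \tuple{h}\in L_\lambda^n$, valid for any $\lambda\in Y(G)$.

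For the direction ``$H$ is $G$-cr over $k$ implies $G(k)\cdot\tuple{h}$ is cocharacter-closed over $k$'', suppose $\lambda\in Y_k(G)$ destabilises $\tuple{h}$ with limit $\tuple{h}'$. Then $H\subseteq P_\lambda$, and by hypothesis some $k$-defined R-Levi $L'$ of $P_\lambda$ contains $H$. Simple transitivity of the $R_u(P_\lambda)(k)$-action on $k$-defined R-Levi subgroups of $P_\lambda$ yields $u\in R_u(P_\lambda)(k)$ with $L'=u^{-1}L_\lambda u$; hence $u\cdot\tuple{h}\in L_\lambda^n$, i.e., $u^{-1}\cdot\lambda$ fixes $\tuple{h}$. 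An appeal to \cite[Lem.\ 2.12]{GIT} (exactly as in the proof of Proposition \ref{prop:ruplevi}) then forces $\tuple{h}'=u\cdot\tuple{h}\in G(k)\cdot\tuple{h}$. This step is insensitive to connectedness of $G$.

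For the reverse implication, assume $G(k)\cdot\tuple{h}$ is cocharacter-closed over $k$ and let $P$ be a $k$-defined R-parabolic subgroup of $G$ containing $H$. When $P=P_\lambda$ for some $\lambda\in Y_k(G)$, the forward argument runs in reverse: cocharacter-closedness together with Corollary \ref{cor:mainconjcocharclosed} produces $u\in R_u(P_\lambda)(k)$ with $uHu^{-1}\subseteq L_\lambda$, so $u^{-1}L_\lambda u$ is the desired $k$-defined R-Levi of $P$. The genuine new difficulty is that, for non-connected $G$, a $k$-defined R-parabolic need not arise from any $k$-defined cocharacter (\cite[Rem.\ 2.4]{GIT}). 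To handle this, the plan is to use that $P^0$ is a $k$-defined parabolic of $G^0$, hence $P^0 = P_\mu(G^0)$ for some $\mu\in Y_k(G)$; then $P_\mu(G)$ is a $k$-defined R-parabolic sharing its identity component---and hence its unipotent radical---with $P$. Applying the previous argument to $P_\mu(G)$, with Corollary \ref{cor:conn_nonconn} used to relay cocharacter-closedness between the $G$- and $G^0$-orbits, yields a $k$-defined R-Levi subgroup of $P_\mu(G)$ into which some $R_u(P)(k)$-conjugate of $H$ embeds. This R-Levi is then translated into a $k$-defined R-Levi of $P$ itself via the parametrisation of the R-Levi subgroups of either R-parabolic by the common set of maximal tori of $P^0$, combined with simple transitivity of $R_u(P)(k)$ on the set of $k$-defined R-Levi subgroups of $P$.

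The hard part will be this final translation between R-Levi subgroups of $P_\mu(G)$ and of $P$: the non-connected structure must be controlled so that all of $H$---not merely the identity component $H^0$---lands in the resulting R-Levi of $P$ after a single $R_u(P)(k)$-conjugation, in a Galois-equivariant manner.
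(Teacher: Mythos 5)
Your forward direction is fine, and your treatment of the reverse direction when $P=P_\lambda$ with $\lambda\in Y_k(G)$ is exactly right. You have also correctly put your finger on the genuine difficulty---that a $k$-defined R-parabolic $P$ of a non-connected $G$ need not be of the form $P_\lambda$ for $\lambda\in Y_k(G)$---which is precisely why the paper removes the connectedness hypothesis on $G$ from the earlier result \cite[Thm.\ 5.9]{GIT}.

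However, the workaround you propose does not close the gap. You pick $\mu\in Y_k(G)$ with $P^0=P_\mu(G^0)$ and intend to run the $P_\lambda$-argument with $P_\mu(G)$ in place of $P$. That argument requires as input that $H\subseteq P_\mu(G)$ (equivalently, that $\lim_{a\to 0}\mu(a)\cdot\tuple{h}$ exists), but all you know is $H\subseteq P$. The two R-parabolic subgroups $P$ and $P_\mu(G)$ share the identity component $P^0$, yet they are genuinely different subgroups of $G$ in general---that is precisely what the counterexample in \cite[Rem.\ 2.4]{GIT} illustrates---and elements of $H$ lying in $P\setminus G^0$ need not lie in $P_\mu(G)$. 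The same objection hits the final ``translation'' step: even granting that you had found a $k$-defined R-Levi $L'$ of $P_\mu(G)$ containing $H$, it does not follow that the (unique, $k$-defined) R-Levi of $P$ sharing $L'$'s identity component contains all of $H$; an element $h\in H\setminus G^0$ that centralizes $\IM(u\cdot\mu)$ need not centralize the image of a cocharacter $\sigma$ with $P_\sigma=P$ and $L_\sigma\cap G^0=(L')^0$. So neither the passage into $P_\mu(G)$ nor the passage back out is justified, and your own closing paragraph essentially concedes this.

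The paper does not in fact take the $P\rightsquigarrow P_\mu(G)$ route. Its proof is by reference: it quotes the proof of \cite[Thm.\ 5.9]{GIT} verbatim, with the single substitution of the non-connected ingredient (the paper points to Corollary~\ref{cor:conn_nonconn}; in effect one replaces the appeal to \cite[Thm.\ 3.10]{GIT} by its non-connected analogue Corollary~\ref{cor:mainconjcocharclosed}). The mechanism by which \cite[Thm.\ 5.9]{GIT}'s argument avoids your difficulty is a reformulation of ``$G$-cr over $k$'' which only quantifies over R-parabolic subgroups of the form $P_\lambda$, $\lambda\in Y_k(G)$; with that reformulation in hand there is no general $P$ to deal with, and the whole of your second paragraph onwards becomes the complete proof. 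Without such a reduction, your argument as written is incomplete.
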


\begin{proof}
 This follows from the proof of \cite[Thm.\ 5.9]{GIT} (which is the special case when $G$ is connected), replacing \cite[Thm.\ 3.10]{GIT} with Corollary~\ref{cor:conn_nonconn}.
\end{proof}

\begin{rem}
\label{rem:ktuple}
 Suppose the subgroup $H$ of $G$ is $k$-defined.  We can pick a generic tuple $\tuple{h}\in H(k_s)^n$ of $H$ for some $n$, and without loss we can assume that $\Gamma$ permutes the entries of the tuple.  Let $W= H^n/S_n$, where $S_n$ acts by permuting the entries of tuples, and let $\pi_W\colon H^n\ra W$ be the canonical projection.  Then $w:= \pi_W(\tuple{h})$ is a $k$-point, and it follows from Lemma~\ref{lem:finquot} and Theorem~\ref{thm:crvscocharclosed} that $H$ is $G$-cr over $k$ if and only if $G(k)\cdot w$ is cocharacter-closed over $k$.
\end{rem}

With this characterization in hand, we may apply our earlier rationality
results. Recall (cf.\ \cite[Def.\ 2.11]{herpel})
that a prime $p$ is called \emph{pretty good} for $G$
provided it is a good prime with the extra property that both
the root lattice in the character group and the coroot lattice in the
cocharacter group have no $p$-torsion. A prime $p$ which is very good
for $G$ is automatically pretty good, but the converse fails in general.

\begin{cor} 
\label{cor:prettycrdescent}
Suppose $p = \Char(k)$ is a pretty good prime for $G$ and let $H$ be a $k$-defined
subgroup of $G$. Let $k'/k$ be an algebraic field extension.
If $H$ is $G$-completely reducible over $k'$, then $H$ is $G$-completely reducible over $k$.
\end{cor}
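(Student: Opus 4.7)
The plan is to translate the $G$-complete reducibility question into one about cocharacter-closed $G(k)$-orbits via Theorem \ref{thm:crvscocharclosed} and Remark \ref{rem:ktuple}, and then apply the Galois descent result Corollary \ref{cor:MCkdefinedstabilizer}(iii), which is valid for arbitrary algebraic extensions provided the stabilizer of the orbit representative has a $k$-defined identity component. The pretty-good hypothesis enters precisely in the verification of that $k$-definedness, via Herpel's smoothness theorem \cite[Thm.\ 1.1]{herpel}.

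First, following Remark \ref{rem:ktuple}, I would pick a generic tuple $\tuple{h}\in H(k_s)^n$ of $H$ whose entries are permuted by $\Gamma$, and form $w := \pi_W(\tuple{h}) \in W(k)$, where $W = G^n/S_n$ with $G$ acting by simultaneous conjugation. By Remark \ref{rem:ktuple}, $H$ is $G$-cr over $k$ if and only if $G(k)\cdot w$ is cocharacter-closed over $k$. The same $\tuple{h}$ remains a generic tuple over $k'$ (genericity is purely algebraic-geometric, independent of the base field), so Theorem \ref{thm:crvscocharclosed} combined with Lemma \ref{lem:finquot}(ii) applied over $k'$ with $F = S_n$ gives likewise that $H$ is $G$-cr over $k'$ if and only if $G(k')\cdot w$ is cocharacter-closed over $k'$. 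Hence the hypothesis translates to: $G(k')\cdot w$ is cocharacter-closed over $k'$, and it suffices to deduce the analogous statement over $k$.

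To apply Corollary \ref{cor:MCkdefinedstabilizer}(iii), I must verify that $G_w^0$ is $k$-defined. The scheme-theoretic stabilizer $G_{\tuple{h}}$ of $\tuple{h}$ in $G$ (for the conjugation action on $G^n$) coincides with the scheme-theoretic centralizer $C_G(H)$, because the entries of $\tuple{h}$ generate $H$ as an algebraic group; by Lemma \ref{lem:finquot}(iii) this yields $G_w^0 = G_{\tuple{h}}^0 = C_G(H)^0$. Here is the one place where the pretty-good hypothesis does real work: by \cite[Thm.\ 1.1]{herpel}, $C_G(H)$ is smooth, so it coincides with its underlying reduced subgroup. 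Since $C_G(H)$ is cut out by $k$-defined equations (as both $G$ and $H$ are $k$-defined), being smooth forces it to be $k$-defined, and hence so is $G_w^0$.

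Finally, Corollary \ref{cor:MCkdefinedstabilizer}(iii) applied to $v = w$ and the extension $k'/k$ yields that $G(k)\cdot w$ is cocharacter-closed over $k$, which by the equivalence established in the first step means $H$ is $G$-cr over $k$. The main, essentially only, obstacle is the middle step, namely establishing the $k$-definedness of $G_w^0 = C_G(H)^0$. Without the pretty-good hypothesis one loses smoothness of centralizers, and the earlier descent results (Proposition \ref{prop:galois_descent}, Theorem \ref{thm:descent+ascent}(i)) are available only for separable extensions; the pretty-good hypothesis is precisely what bridges the gap to arbitrary algebraic extensions by supplying the stabilizer rationality needed to invoke Corollary \ref{cor:MCkdefinedstabilizer}(iii).
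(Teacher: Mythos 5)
Your proof is essentially correct and uses the same core machinery as the paper's (Theorem \ref{thm:crvscocharclosed}, the smoothness of centralizers in pretty good characteristic via \cite[Thm.~1.1]{herpel}, and Corollary \ref{cor:MCkdefinedstabilizer}(iii)), but it differs from the paper's proof in two places worth noting. First, you detour through the quotient $W = G^n/S_n$ and the $k$-point $w = \pi_W(\tuple{h})$ via Remark \ref{rem:ktuple}; this is unnecessary, since Corollary \ref{cor:MCkdefinedstabilizer}(iii) is stated for arbitrary $v \in V$ and does not require $v$ to be a $k$-point, so one can work directly with $\tuple{h} \in V(k_s)$ as the paper does. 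Second, and more substantively, your argument for the $k$-definedness of $G_w^0 = C_G(H)^0$ asserts that the scheme-theoretic centralizer is ``cut out by $k$-defined equations,'' and deduces $k$-definedness from smoothness; while the conclusion is correct, this step is left unjustified, because expressing $C_G(H)$ (a priori an intersection of infinitely many conditions parametrized by $H$) as the vanishing locus of a $k$-defined ideal is exactly the nontrivial point. The paper sidesteps this by noting that smoothness of the scheme-theoretic stabilizer of $\tuple{h}$ is equivalent to separability of the orbit $G\cdot\tuple{h}$, then invoking Proposition \ref{prop:perfectorseparable}(ii): separability gives $k_s$-definedness of $G_{\tuple{h}}$ (Springer), and the $\Gamma$-stability of $G_{\tuple{h}}(k_s) = C_G(H)(k_s)$ (which holds since $H$ is $k$-defined) then yields $k$-definedness by the Galois criterion. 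You should either adopt that route or supply a reference for the representability and $k$-definedness of the centralizer functor; as written there is a gap in the justification, though not in the overall strategy.
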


\begin{proof}
Let $\tuple{h}$ be a generic tuple of $H$.
By \cite[Cor.\ 3.4]{BMRT}, the orbit 
$G \cdot \tuple{h}$ is separable, provided that $p$
is a very good prime for $G$.
It follows from \cite[Thm.\ 1.1]{herpel} that the condition may be relaxed
to require only that $p$ is pretty good for $G$.
Moreover, since $H$ is $k$-defined, $H(k_s)$ is dense in $H$,
so we can choose $\tuple{h} \in H(k_s)^n$ such that $\tuple{h}$ is a generic tuple of $H$. 
Then $G_{\tuple{h}}(k_s)= C_G(H)(k_s)$ is $\Gamma$-stable.
Hence the conditions of Proposition
\ref{prop:perfectorseparable}(ii) are satisfied, and the result
follows from Theorem~\ref{thm:crvscocharclosed} together with
Corollary \ref{cor:MCkdefinedstabilizer}(iii).
\end{proof}

\begin{rem}
The assertion of Corollary~\ref{cor:prettycrdescent} is false for inseparable
field extensions without the assumption on $p=\Char(k)$: see
\cite[Ex.\ 7.22]{BMRT} for an example in $G_2$ with $p=2$ and
\cite[Thm.\ 1.10]{uchiyama} for an example in $E_7$ with $p=2$.  Corollary~\ref{cor:prettycrdescent} shows that this kind of pathology can occur only in small characteristic.
\end{rem}

Theorem~\ref{thm:descent+ascent} and
Remark~\ref{rem:ktuple}
immediately yield the following results on 
Galois ascent/ descent and Levi ascent/descent for 
$G$-complete reducibility.
We note that we are able to obtain 
results of this nature
under slightly different hypotheses using a building-theoretic approach: see \cite{BMR:sepext}, \cite{BHMR:buildings}.

\begin{cor}
\label{cor:descent+ascentGCR}
Let $H$ be a $k$-defined subgroup of $G$ such that $C_G(H)$ is $k$-defined.
Then the following hold.
\begin{itemize}
\item[(i)] For any separable algebraic extension $k'/k$, 
$H$ is $G$-completely reducible over $k'$ if and only if 
$H$ is $G$-completely reducible over $k$.
\item[(ii)] For a $k$-defined torus $S$ of $C_G(H)$ let 
$L = C_G(S)$. Then 
$H$ is $G$-completely reducible over $k$ if and only if 
$H$ is $L$-completely reducible over $k$.
\end{itemize}
\end{cor}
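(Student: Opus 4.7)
The plan is to translate $G$-complete reducibility over $k$ into cocharacter-closedness of an associated $G(k)$-orbit and then apply Theorem \ref{thm:descent+ascent}. Fix an embedding $G \hookrightarrow \GL_m$. Because $H$ is $k$-defined, $H(k_s)$ is dense in $H$, so I can choose a generic tuple $\tuple{h} \in H(k_s)^n$ of $H$ for this embedding; enlarging $n$ and adjoining all Galois conjugates, the (multi)set of entries of $\tuple{h}$ can be arranged to be $\Gamma$-stable. Setting $W = G^n/S_n$ with canonical quotient $\pi_W$ and $w = \pi_W(\tuple{h})$, the $\Gamma$-stability of the entries forces $w \in W(k)$. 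By Remark~\ref{rem:ktuple}, $H$ is $G$-cr over $k$ iff $G(k)\cdot w$ is cocharacter-closed over $k$, and the analogous equivalences hold after base change to $k'$ and with $L$ in place of $G$. For the $L$-version note that $S \subseteq C_G(H)$ forces $H \subseteq C_G(S) = L$, and $\tuple{h}$ is still a generic tuple of $H$ for the restricted embedding $L \hookrightarrow \GL_m$, since Definition~\ref{def:generictuple} depends only on $H$ and the embedding.

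Before invoking Theorem~\ref{thm:descent+ascent}, I must verify its standing hypothesis that $G_w$ admits a $k$-defined maximal torus. The stabilizer $G_{\tuple{h}}$ equals $C_G(H)$ because $\tuple{h}$ generates $H$ as an associative subalgebra of $\Mat_m$, and the natural map $G_w \to S_n$ sending $g$ to the permutation carrying $\tuple{h}$ to $g \cdot \tuple{h}$ has kernel $G_{\tuple{h}}$, so $G_w/G_{\tuple{h}}$ embeds into $S_n$ and is finite. Hence $G_w^0 = G_{\tuple{h}}^0 = C_G(H)^0$, which is $k$-defined by hypothesis. By \cite[18.2 Thm.]{borel}, $G_w^0$ contains a $k$-defined maximal torus, which is simultaneously a maximal torus of $G_w$.

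With these preliminaries in place both statements fall out directly. For (i), apply Theorem~\ref{thm:descent+ascent}(i) to $w \in W(k)$ and the separable algebraic extension $k'/k$; this yields the equivalence of cocharacter-closedness of $G(k)\cdot w$ over $k$ and of $G(k')\cdot w$ over $k'$, which translates via the first paragraph to the equivalence of $G$-cr over $k$ and over $k'$. For (ii), the torus $S \subseteq C_G(H) \subseteq G_w$ is a $k$-defined torus of $G_w$, so Theorem~\ref{thm:descent+ascent}(ii) gives the equivalence of cocharacter-closedness of $G(k)\cdot w$ and of $L(k)\cdot w$ over $k$, which translates to the equivalence of $G$-cr and $L$-cr of $H$ over $k$. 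The only point that is not purely formal is the identification $G_w^0 = C_G(H)^0$, but this is essentially immediate from the finiteness of the fibres of $\pi_W$; everything else is bookkeeping on top of Theorem~\ref{thm:descent+ascent} and Remark~\ref{rem:ktuple}.
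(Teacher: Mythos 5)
Your proof is correct and follows the paper's intended route: the paper asserts that the corollary follows immediately from Theorem~\ref{thm:descent+ascent} and Remark~\ref{rem:ktuple}, and you have supplied exactly the non-formal verification needed, namely that $G_w$ has a $k$-defined maximal torus via $G_w^0 = G_{\tuple{h}}^0 = C_G(H)^0$ (using that $G_w/G_{\tuple{h}}$ is finite since $G_w$ permutes the finite fibre $\pi_W^{-1}(w)$).
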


The Levi ascent/descent statement of 
Corollary \ref{cor:descent+ascentGCR}(ii)
provides a different proof---valid for non-connected $G$---of 
Serre's result \cite[Prop.\ 3.2]{serre1.5}.

The next result has been obtained by McNinch in \cite[Prop.\ 4.1.1]{mcninch3}.
We give a different proof, which relies on Theorem \ref{thm:MCkdefinedstabilizer}.

\begin{cor}
\label{cor:linreductive}
Let $H$ be a $k$-defined linearly reductive subgroup of $G$.  
Then $H$ is $G$-completely reducible over $k$.
\end{cor}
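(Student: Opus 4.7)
The plan is to combine the geometric characterisation of $G$-complete reducibility over $k$ furnished by Theorem~\ref{thm:crvscocharclosed} (as reformulated in Remark~\ref{rem:ktuple}) with the rationality statement of Corollary~\ref{cor:MCkdefinedstabilizer}(ii). First I would select a generic tuple $\tuple{h} \in H(k_s)^n$ of $H$ whose entries are permuted by $\Gamma$; this is available because $H$ is $k$-defined and hence $H(k_s)$ is dense in $H$. Setting $W := H^n/S_n$ (with $S_n$ acting by permuting factors, commuting with the diagonal $G$-action) and $w := \pi_W(\tuple{h}) \in W(k)$, Remark~\ref{rem:ktuple} reduces the theorem to showing that $G(k)\cdot w$ is cocharacter-closed over $k$. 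To invoke Corollary~\ref{cor:MCkdefinedstabilizer}(ii) for $w \in W(k)$, I have to verify (a) that $G\cdot w$ is Zariski-closed in $W$, and (b) that $G_w^0$ is $k$-defined.

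For (a), I would use the classical fact that a linearly reductive subgroup $H$ of $G$ is $G$-completely reducible over $\ovl{k}$: whenever $H \subseteq P$ for an R-parabolic $P$, the $H$-action on $R_u(P)$ (by conjugation) is completely reducible, which in the usual way forces $H$ to be contained in some R-Levi subgroup of $P$. Theorem~\ref{thm:crvscocharclosed} applied over $\ovl{k}$ (equivalently, \cite[Cor.\ 3.7]{BMR}) then tells me that $G\cdot\tuple{h}$ is Zariski-closed in $H^n$, and since $\pi_W$ is a finite (hence closed) morphism, $G\cdot w = \pi_W(G\cdot\tuple{h})$ is Zariski-closed in $W$. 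For (b), genericity of $\tuple{h}$ gives $G_{\tuple{h}} = C_G(H)$, and applying Lemma~\ref{lem:finquot}(iii) once with the connected subgroup $G_{\tuple{h}}^0$ and once with $G_w^0$ yields $G_w^0 = G_{\tuple{h}}^0 = C_G(H)^0$; hence it is enough to argue that $C_G(H)$ is $k$-defined.

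The key external input I would invoke is that linear reductivity of $H$ forces the scheme-theoretic centraliser $\mathcal{C}_G(H)$ to be smooth: because every rational $H$-module is completely reducible, the adjoint action of $H$ on $\gg$ is semisimple and $\Lie(\mathcal{C}_G(H)) = \gg^H$ has the expected dimension. Since $H$ is $k$-defined, $\mathcal{C}_G(H)$ is automatically a $k$-defined subscheme of $G$, and smoothness then promotes $C_G(H)$ (and hence $C_G(H)^0$) to a $k$-defined subgroup by \cite[Prop.\ 12.1.5]{springer}. Together, (a) and (b) allow Corollary~\ref{cor:MCkdefinedstabilizer}(ii) to deliver cocharacter-closedness of $G(k)\cdot w$ over $k$, and Remark~\ref{rem:ktuple} concludes that $H$ is $G$-completely reducible over $k$. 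The only genuine obstacle is the smoothness of $\mathcal{C}_G(H)$ for linearly reductive $H$; this is where linear reductivity is really used, and it is a standard result in the theory of linearly reductive group schemes.
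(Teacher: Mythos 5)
Your proof is correct and rests on the same three pillars as the paper's: (1) the geometric characterisation of $G$-complete reducibility over $k$ via cocharacter-closedness of the orbit of a generic tuple; (2) Zariski-closedness of the geometric orbit (linearly reductive $\Rightarrow$ $G$-cr over $\ovl{k}$); (3) smoothness/separability input from linear reductivity, feeding into Corollary~\ref{cor:MCkdefinedstabilizer}(ii). The only real difference is organizational. The paper stays with the $k_s$-point $\tuple{h}$ and checks the hypothesis of Corollary~\ref{cor:MCkdefinedstabilizer}(ii) via Proposition~\ref{prop:perfectorseparable}(ii): it proves $G\cdot\tuple{h}$ is separable by establishing $\cc_\gg(\tuple{h}) = \cc_\gg(H) = \Lie(C_G(H)) = \Lie(C_G(\tuple{h}))$, citing Richardson for the middle equality. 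You instead descend to the $k$-point $w=\pi_W(\tuple{h})$ via Remark~\ref{rem:ktuple} and then check $G_w^0$ is $k$-defined directly, identifying $G_w^0 = G_{\tuple{h}}^0 = C_G(H)^0$ via Lemma~\ref{lem:finquot}(iii) and invoking smoothness of the scheme-theoretic centraliser $\mathcal{C}_G(H)$. These two routes appeal to the same underlying fact — the separability of the orbit map is equivalent to the smoothness of $\mathcal{C}_G(H)$, and both ultimately trace back to Richardson's lemma — so your proof is a repackaging of the paper's rather than a genuinely different argument; the detour through $W$ is sound but slightly longer, since Corollary~\ref{cor:MCkdefinedstabilizer}(ii) only needs $G_v^0$ to be $k$-defined, which the paper obtains for $\tuple{h}$ itself without ever invoking the $S_n$-quotient. (One small slip: $G\cdot\tuple{h}$ is closed in $G^n$, not $H^n$, and likewise $W$ should be read as $G^n/S_n$ so that the $G$-action descends; this is consistent with the apparent typo in Remark~\ref{rem:ktuple}.)
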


\begin{proof}
Since $H$ is $k$-defined,
as in the proof of Corollary \ref{cor:prettycrdescent} we may choose a generic tuple
$\tuple{h} \in H(k_s)^n$ of $H$ such that $G_{\tuple{h}}(k_s)$ is $\Gamma$-stable.
By Theorem~\ref{thm:crvscocharclosed}, it suffices to show that
$G(k)\cdot \tuple{h} \subseteq G^n$ is cocharacter-closed over $k$.
By \cite[Lem.\ 2.4]{BMR2} and \cite[Thm.\ 5.8(iii)]{GIT},
the orbit $G \cdot \tuple{h}$ is closed.

We have $\cc_\gg(\tuple{h}) = \cc_\gg(H) = \Lie(C_G(H)) = \Lie(C_G(\tuple{h}))$,
where the second equation follows from
\cite[Lem.\ 4.1]{rich1}, and the other equations from
\cite[Lem.\ 5.5]{GIT} and its proof.
Thus the orbit $G \cdot \tuple{h}$ is separable. 
Therefore, 
it follows from
Corollary \ref{cor:MCkdefinedstabilizer}(ii)
and 
Proposition \ref{prop:perfectorseparable}(ii)
that $G(k) \cdot \tuple{h}$ is cocharacter-closed over $k$,
which finishes the proof.
\end{proof}

\begin{rem}
\label{rem:linreductive}
Observe that Corollary \ref{cor:linreductive}
is false if $H$ is not $k$-defined: e.g., 
let $G$ be $k$-split and let $S$ be a maximal $k$-defined torus.
Suppose $\lambda \in Y_k(S)$ and 
$u \in R_u(P_\lambda) \setminus R_u(P_\lambda)(k)$ are
chosen so that $H = u \cdot S$ is a torus which is no longer
$k$-defined. Clearly, $H \subseteq P_\lambda$.
But $H$ is not contained in any $k$-defined Levi subgroup.
Indeed, if $L = L_{x \cdot \lambda}$ with 
$x \in R_u(P_\lambda)(k)$ is such a Levi subgroup, then it
must coincide with $L_{u \cdot \lambda}$, which is the unique
Levi subgroup of $P_\lambda$ containing $H$. But this
implies that $u = x \in R_u(P_\lambda)(k)$, a contradiction.
\end{rem}

\begin{rem}
Following McNinch (see \cite{mcninch}), a subalgebra $\hh$ of $\gg$
is called \emph{$G$-completely reducible} provided that whenever 
$\hh$ is contained in $\Lie(P)$ for $P$ an R-parabolic subgroup of $G$,
there exists an R-Levi subgroup $L$ of $P$ with $\hh \subseteq \Lie(L)$.
All of the concepts and results of this section may be formulated and proved
for Lie algebras as well. For a generic tuple associated to $\hh$ we simply
take a generating tuple $\tuple{h} \in \hh^n$ for a suitable $n$.
See also \cite[\S5.3]{GIT}.
\end{rem}

\section{The action of ${\rm GL}(W)$ on ${\rm End}(W)$}
\label{sec:GLn}

In this section we illustrate the notion of cocharacter-closedness
in the classical context of conjugacy of endomorphisms under the general linear group.
Let $W_0$ be a finite-dimensional $k$-vector space with associated
$\overline{k}$-space $W = \overline{k} \otimes_k W_0$.
Consider $G = \GL(W)$ with its natural action by conjugation on $V = \End(W)$,
and let both $G$ and $V$ be endowed with the $k$-structures induced from $W_0$,
so that the action is $k$-defined.
For $f \in V(k) = \End(W_0)$, let $\mu_f \in k[T]$ be the minimal polynomial of $f$, and
$\chi_f \in k[T]$ the characteristic polynomial.
Note that both $\chi_f$ and $\mu_f$ remain invariant under field extensions of $k$.
For any $\mu \in k[T]$, there exists $f \in V(k)$ 
having $\mu$ as its minimal polynomial and its characteristic polynomial
(e.g., via considering the companion matrix of $\mu$, see \cite[Thm.\ 7.12]{Roman}).

In general, the minimal polynomial $\mu_f$ only encodes partial information about the
rational normal form of $f$ and hence about the orbit $G(k) \cdot f = \GL(W_0) \cdot f$.
However, our next
result shows that one can read off from $\mu_f$ whether $G(k) \cdot f$ is
cocharacter-closed.
It is convenient to consider $W_0$ as a $k[T]$-module, where $T$ acts via $f$.

\begin{prop}
\label{prop:cc_for_GL}
The following are equivalent:
\begin{itemize}
\item[(i)] The orbit $G(k)\cdot f \subseteq V$ is cocharacter-closed over $k$;
\item[(ii)] $\mu_f$ is square-free in $k[T]$ (i.e., has no repeated irreducible factors);
\item[(iii)] $W_0$ is semisimple as a $k[T]$-module.
\end{itemize}
\end{prop}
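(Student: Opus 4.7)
The plan is to establish all three equivalences via a module-theoretic dictionary, viewing $W_0$ as a $k[T]$-module with $T$ acting as $f$. The equivalence of (ii) and (iii) is a standard consequence of the structure theorem for modules over the PID $k[T]$: writing $W_0\cong\bigoplus_{i,j}k[T]/(p_i^{e_{i,j}})$ with the $p_i$ distinct monic irreducibles of $k[T]$, one has $\mu_f=\prod_i p_i^{\max_j e_{i,j}}$, and $W_0$ is semisimple as a $k[T]$-module if and only if every $e_{i,j}=1$, which is exactly the square-freeness of $\mu_f$ in $k[T]$.

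The key translation for the equivalence of (i) with (ii) and (iii) runs as follows. Since $G=\GL(W)$ is $k$-split, giving a $k$-defined cocharacter $\lambda\in Y_k(G)$ is the same as giving a $\ZZ$-grading $W_0=\bigoplus_n W_{0,n}$ of the $k$-vector space $W_0$. Setting $F_n:=\bigoplus_{m\ge n}W_{0,m}$, a direct weight calculation shows that $\lim_{a\to 0}\lambda(a)\cdot f$ exists if and only if $f$ preserves the descending filtration $F_\bullet$, in which case the limit $f'$ acts on $W_{0,n}$ via the composite $W_{0,n}\hookrightarrow F_n\to F_n/F_{n+1}\cong W_{0,n}$ where the middle arrow is induced by $f$. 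Moreover, $P_\lambda$ consists of the $k$-linear automorphisms of $W_0$ preserving $F_\bullet$, $R_u(P_\lambda)$ of those additionally inducing the identity on each $F_n/F_{n+1}$, and two endomorphisms are $G(k)$-conjugate exactly when the corresponding $k[T]$-module structures on $W_0$ are isomorphic.

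With this dictionary in hand, for (iii)$\Rightarrow$(i) assume $W_0$ is semisimple over $k[T]$ and let $f'=\lim_{a\to 0}\lambda(a)\cdot f$ exist. Semisimplicity yields $k[T]$-submodule complements $W'_{0,n}\subseteq F_n$ to $F_{n+1}$; since $W_{0,n}$ and $W'_{0,n}$ are both $k$-subspace complements of $F_{n+1}$ in $F_n$, the element $u\in\GL(W_0)$ sending each $v\in W_{0,n}$ to the unique element of $W'_{0,n}$ congruent to $v$ modulo $F_{n+1}$ lies in $R_u(P_\lambda)(k)$, and a routine check using $f$-stability of $F_\bullet$ gives $u\cdot f'=f$. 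Conversely, for (i)$\Rightarrow$(ii), suppose $\mu_f$ has an irreducible factor $p\in k[T]$ of multiplicity $e\ge 2$. The kernels $\ker(p(f)^j)$ form a strictly increasing chain of $k[T]$-submodules inside the $p$-primary component of $W_0$; choosing $k$-linear complements and extending by the trivial grading on the remaining primary components yields a $k$-grading of $W_0$, hence a $k$-defined cocharacter $\lambda$. Then $f':=\lim_{a\to 0}\lambda(a)\cdot f$ exists, and $p(f')$ kills the $p$-primary component of $W_0$, so $\mu_{f'}$ is a proper divisor of $\mu_f$ in $k[T]$. Since the minimal polynomial is a $G(\ovl{k})$-conjugacy invariant, $f$ and $f'$ are not $G(k)$-conjugate, showing $G(k)\cdot f$ is not cocharacter-closed. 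The main obstacle is the clean setup of the dictionary between $k$-defined cocharacters and limits on the one hand and gradings, filtrations, and associated graded modules on the other; once that is in place, both directions reduce to standard module-theoretic facts.
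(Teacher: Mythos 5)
Your proof is correct, but it takes a genuinely different route from the paper. The paper's argument first reduces to the case where $W_0$ is indecomposable as a $k[T]$-module: if $W_0=W_1\oplus W_2$ with $f_i=f|_{W_i}$, the centre of $\GL(W_1)\times\GL(W_2)$ is a $k$-split torus in $G_f$, and Theorem~\ref{thm:leviascentdescent}(ii) shows $G(k)\cdot f$ is cocharacter-closed over $k$ iff both $\GL(W_i)(k)\cdot f_i$ are; since $\mu_f=\mathrm{lcm}(\mu_{f_1},\mu_{f_2})$, this matches the square-freeness condition, so one may assume $W_0$ indecomposable. In that case $\chi_f=\mu_f$ is a prime power, so (ii)~$\Leftrightarrow$~(iii) becomes ``$\mu_f$ irreducible iff $W_0$ irreducible,'' and (i)~$\Leftrightarrow$~(iii) is handled by the observation that a proper $f$-stable $k$-subspace $U$ gives a parabolic whose limit endomorphism stabilizes a complement to $U$, contradicting indecomposability, while an irreducible $W_0$ admits no proper $f$-stable subspace at all. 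By contrast, you avoid the reduction entirely: you set up the dictionary between $k$-defined cocharacters of $\GL(W)$, $\ZZ$-gradings of $W_0$, filtrations, and associated gradeds, prove (ii)~$\Leftrightarrow$~(iii) directly from the structure theorem for modules over the PID $k[T]$, and for (iii)~$\Rightarrow$~(i) explicitly construct $u\in R_u(P_\lambda)(k)$ with $u\cdot f'=f$ using semisimplicity to choose $k[T]$-module complements in the filtration; for (i)~$\Rightarrow$~(ii) you grade a suitable kernel filtration inside the $p$-primary component to drop the multiplicity of $p$ in $\mu_{f'}$. The paper's argument is shorter because it leans on the Levi ascent/descent machinery already built up; your argument is more elementary and self-contained (it needs none of the paper's main theorems), and it yields the sharper conclusion that the limit is $R_u(P_\lambda)(k)$-conjugate to $f$ rather than merely $G(k)$-conjugate, which is the refined conclusion the paper instead derives in general via Corollary~\ref{cor:mainconjcocharclosed}.
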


\begin{proof}
Suppose first that $W_0 = W_1 \oplus W_2$ decomposes as a $k[T]$-module.
Let $f_i = f|_{W_i}$ and let $G_i = \GL(\overline{k}\otimes W_i)$ ($i=1,2$).
Then by an application of Theorem \ref{thm:leviascentdescent}(ii),
$G(k)\cdot f$ is cocharacter-closed if and only if
$G_1(k) \cdot f_1$ and $G_2(k) \cdot f_2$ are cocharacter-closed.
Moreover, $\mu_f = \text{lcm}(\mu_{f_1},\mu_{f_2})$ 
(cf.\ \cite[Thm.\ 7.7]{Roman})
is
square-free if and only if both $\mu_{f_1}$ and $\mu_{f_2}$ are.
Thus we may assume from the outset that $W_0$ is an indecomposable $k[T]$-module.

In this case, it 
follows (e.g., from the primary cyclic decomposition of $W_0$,
see \cite[Thm.\ 7.6]{Roman})
that $\chi_f = \mu_f$ is the power of an
irreducible polynomial in $k[T]$.
Hence $\mu_f$ is square-free if and only if $\chi_f$ is irreducible,
which is easily seen to be equivalent to $W_0$ being irreducible as a $k[T]$-module.
This proves the equivalence of (ii) and (iii).

To prove that (i) is equivalent to (iii),
suppose first that $W_0$ is irreducible. Then $f$ stabilizes
no proper subspace of $W_0$, and hence $f$ is not destabilized
by any non-central $k$-defined cocharacter of $G$. Thus $G(k) \cdot f$ is
cocharacter-closed. Conversely, suppose that $W_0$ contains a proper
$f$-stable subspace $U$. Let $\lambda$ be a $k$-defined cocharacter such that
$P_\lambda(G)(k)$ is the stabilizer of $U$ in $G(k)$. Then $f'= \lim \lambda(a) \cdot f$
stabilizes a complement to $U$. As $W_0$ is indecomposable,
$f'$ is not $G(k)$-conjugate to $f$, hence $G(k) \cdot f$ is not
cocharacter-closed.
\end{proof}

\begin{rem}
(i). Note that $f$ is a semisimple endomorphism if and only if
$\mu_f$ is separable (that is, square-free in $\overline{k}[T]$),
see \cite[Thm.\ 8.11]{Roman}.
Proposition \ref{prop:cc_for_GL} thus recovers the fact that an endomorphism
$f$ is semisimple if and only if
$G \cdot f$ is cocharacter-closed over $\overline{k}$.
An endomorphism satisfying the equivalent conditions of Proposition \ref{prop:cc_for_GL}
is called \emph{$k$-semisimple} in \cite{dietz}, where an independent proof
of the equivalence of (ii) and (iii) may be found.

(ii). If $k$ is not perfect, there exist irreducible polynomials in $k[T]$
which are not separable. Hence there exist $f \in V(k)$ with $G(k) \cdot f$
cocharacter-closed over $k$ and $G(\overline{k}) \cdot f$ not 
cocharacter-closed over $\overline{k}$. 

(iii). We have used Levi descent and ascent in the proof of Proposition
\ref{prop:cc_for_GL}. In terms of the characterization by $k[T]$-modules,
Levi descent/ascent is just the fact that a module $W = W_1 \oplus W_2$
is semisimple if and only if both summands are.
\end{rem}

All $G=\GL(W)$-orbits on $V=\End(W)$ are separable (as orbit stabilizers are
principal open subsets of the stabilizers in $\gl(W)$). Hence we may apply
Proposition \ref{prop:perfectorseparable} and Remark \ref{rem:perfectorseparable}
and deduce the following results:

\begin{cor}
\label{cor:GL}
Let $k'/k$ be an algebraic field extension and let $f \in V(k)$ as above.
If $G(k') \cdot f$ is cocharacter-closed over $k'$,
then $G(k) \cdot f$ is cocharacter-closed over $k$.
The converse holds provided that $k'/k$ is separable.

Moreover, the preorder $\prec$ is antisymmetric on the set of 
$G(k)$-orbits on $\End(W_0)$.
\end{cor}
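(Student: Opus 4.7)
The crux of the corollary is that the hypothesis $f \in V(k) = \End(W_0)$, combined with the separability of $G$-orbits on $V$ asserted in the paragraph immediately preceding the corollary, forces $G_f^0$ (and in fact $G_f$) to be $k$-defined. Indeed, separability of $G\cdot f$ means the scheme-theoretic stabilizer $\mathcal{G}_f$ is smooth, and since $f\in V(k)\subseteq V(k_s)$ the stabilizer $G_f(k_s)$ is automatically $\Gamma$-stable. Proposition \ref{prop:perfectorseparable}(ii) therefore applies and delivers the $k$-definedness of $G_f^0$. (Alternatively, one can appeal directly to \cite[Prop.~12.1.2]{springer}, since $k$-rationality of $f$ together with smoothness of $\mathcal{G}_f$ is enough.)

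With $G_f^0$ known to be $k$-defined, each of the remaining assertions is an immediate citation. For Galois descent, assuming $G(k')\cdot f$ is cocharacter-closed over $k'$ for an algebraic extension $k'/k$, I apply Corollary \ref{cor:MCkdefinedstabilizer}(iii) to conclude that $G(k)\cdot f$ is cocharacter-closed over $k$. For the converse under the additional assumption that $k'/k$ is separable, I note that $G_f$ being $k$-defined contains a $k$-defined maximal torus by \cite[Thm.~18.2]{borel}; Theorem \ref{thm:descent+ascent}(i) then applies directly and yields the forward implication, namely that $G(k')\cdot f$ is cocharacter-closed over $k'$.

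For the antisymmetry statement, the $k$-definedness argument above goes through verbatim for every $f \in V(k)$, so $G_f^0$ is $k$-defined for every rational point. Remark \ref{rem:perfectorseparable} then immediately gives that $\prec$ is antisymmetric on the set $\{G(k)\cdot f \mid f\in V(k)\}$, which is precisely the set of $G(k)$-orbits on $\End(W_0)$.

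I do not expect any genuine obstacle: the entire corollary is a bookkeeping consequence of the fact that the $G$-action on $\End(W)$ has separable orbits, and the single non-trivial input---separability of orbits, together with its proof sketch in terms of principal open subsets of the Lie algebra centralizer---has already been recorded in the excerpt. The only point requiring mild care is ensuring that in the ascent step one really does have a $k$-defined maximal torus of $G_f$ available, which, as noted, is automatic once $G_f$ itself is $k$-defined.
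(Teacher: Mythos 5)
Your proposal is correct and follows the same route the paper intends: separability of $G$-orbits on $\End(W)$ together with $f\in V(k)$ puts you in the setting of Proposition~\ref{prop:perfectorseparable}(ii), so $G_f$ (hence $G_f^0$) is $k$-defined, and then Corollary~\ref{cor:MCkdefinedstabilizer}(iii), Theorem~\ref{thm:descent+ascent}(i), and Remark~\ref{rem:perfectorseparable} deliver the three assertions. The paper leaves this as a one-line pointer, and you have filled in exactly the intended citations with the right hypotheses verified.
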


\begin{rem}
The first part of the above corollary may also be deduced from the
following two facts, which are even valid for non-algebraic field extensions:
if $\mu_f$ is square-free in $k'[T]$, it is
also square-free in $k[T]$; if $k'/k$ is separable
and $\mu_f$ is square-free in $k[T]$, then
$\mu_f$ is square-free in $k'[T]$
(see \cite[Ch.\ V, \S 15, Cor.\ 1]{BourbakiAlg}).
\end{rem}

We give an example for the failure of Galois ascent for inseparable field
extensions.

\begin{ex}
\label{ex:insepext}
Let $k=\FF_2(t)$ and suppose that $\mu_f=T^{12} + t$.
This polynomial is irreducible in $k[T]$ (e.g., by
Eisenstein's criterion), so in particular
$G(k) \cdot f$ is cocharacter-closed over $k$, by Proposition 
\ref{prop:cc_for_GL}.
Over separable field extensions $k'/k$, $\mu_f$ can only split up to
the point
\begin{align*}
\mu_f = (T^4 + s)(T^4 + \zeta s)(T^4 + \zeta^2 s),
\end{align*}
where $s^3 = t$ and $\zeta \neq 1, \zeta^3 = 1$, and this
expression is still square-free.
In particular, $G(k')\cdot v$ is then still cocharacter-closed over $k'$,
by Proposition \ref{prop:cc_for_GL}.

If $a \in \overline{k}$ satisfies $a^4 = s$, we may further
decompose $\mu_f$ by inseparable field extensions to obtain
\begin{align*}
\mu_f = (T^2 + a^2)^2 (T^2 + \zeta^2 a^2)^2(T^2 + \zeta a^2)^2
= (T + a)^4 (T + \zeta a)^4 (T+ \zeta^2 a)^4.
\end{align*}

If for example $k' = k(a^2)$,
then the factor $(T^2 + a^2)^2 = (T+a)^4$ in $\mu_f$ 
is a maximal prime power in $\mu_f$.
In the primary cyclic decomposition (cf.\ \cite[Thm.\ 7.6]{Roman}) it hence
corresponds to
a $4$-dimensional indecomposable subspace of $k' \otimes W_0$
where the restriction $g$ of $f$ to this subspace
has minimal polynomial $\mu_g = \chi_g = (T+a)^4$.
This implies (see \cite[Thm.\ 7.12]{Roman}) that a matrix representative for $g$
is given by the companion matrix of $(T + a)^4$,
which over $k'$
is conjugate to the matrix
\begin{align*}
\begin{pmatrix}
0 &a^2 & 0 & a^2 \\
1 & 0 & 0 & 0 \\
0 & 0 & 0 & a^2 \\
0 & 0 & 1 & 0
\end{pmatrix}.
\end{align*}
Clearly, we may pick a cocharacter which kills the entry $a^2$ in the top
right corner while leaving the other entries untouched.
In particular,
the limit along such a cocharacter takes this matrix to a
matrix with minimal polynomial
$T^2 + a^2$, so the orbit $G(k') \cdot g$ is no longer cocharacter-closed over $k'$.  Hence $G(k')\cdot f$ is not cocharacter-closed over $k'$, by Theorem~\ref{thm:leviascentdescent}(ii).
\end{ex}

\begin{rem}
\label{rem:flags}
(i).
For a cocharacter $\lambda \in Y_k(G)$ and $f \in V(k)$, the limit
$\lim \lambda(a) \cdot f$ exists if and only if $f$ stabilizes the flag associated to $\lambda$
in $W_0$. Moreover, after choosing a basis compatible with the flag, the limit $f'$ is then
the endomorphism corresponding to
the block diagonal of the corresponding matrix of $f$.
In particular, $\chi_f = \chi_{f'}$ and $\mu_{f'}$ divides $\mu_f$.

(ii).
As noted before Corollary \ref{cor:GL} above, the assumptions
of Proposition \ref{prop:perfectorseparable} are satisfied
for all $f \in V(k)$, hence the assertion of
Theorem \ref{thm:MCkdefinedstabilizer} holds
in our setup:
if $f$ stabilizes a flag associated to $\lambda$ as in (i), and if
$f'$ is $G$-conjugate to $f$, then it is $R_u(P_\lambda)(k)$ conjugate.
After again choosing a basis compatible with the flag, this translates into the following non-elementary
statement about matrices:
a block upper triangular matrix $A$ over $k$ that is conjugate (over $\overline{k}$) to
the corresponding block diagonal matrix $A'$, can be conjugated to $A'$ by an
block upper triangular matrix over $k$ with identity matrices on the diagonal.
\end{rem}

We use these facts to give an example motivated by classical invariant theory (see \cite{kraft}), concerning cocharacter-closures of sets.

\begin{ex}
\label{ex:charpol}
For $\chi \in k[T]$ consider the set $C_\chi \subset V(k)$ of endomorphisms with prescribed
characteristic polynomial $\chi$. By Remark \ref{rem:flags}(i),
$C_\chi$ is cocharacter-closed.
Moreover, $f \in C_\chi$ has a cocharacter-closed orbit precisely when
$\mu_f$ is the product of all distinct irreducible factors of $\chi$ (Proposition~\ref{prop:cc_for_GL}).
In this case, $\mu_f$ together with $\chi_f= \chi$ uniquely determine the orbit $G(k)\cdot f$.
Indeed, it follows---e.g., from the primary cyclic decomposition
of $W_0$ (cf.\ \cite[Thm.\ 7.2]{Roman})---that 
the elementary divisors of $f$ must in this case be given by the irreducible factors of $\mu$, counted with their multiplicities as factors of $\chi$.
Since the elementary divisors determine a rational canonical form of $f$
(cf.\ \cite[Thm.\ 7.14]{Roman}), they determine the orbit
$G(k)\cdot f$.
In particular, $C_\chi$
contains a unique cocharacter-closed orbit.

It is well known that the closed orbits in $V$---that is, the conjugacy classes of diagonalisable matrices---are parametrized by $\ovl{k}^n$, where $n:= \dim W$.  We now see that the cocharacter-closed orbits in $V(k)$ are parametrized by the space $k^n$.  For let $\pi: V(k) \rightarrow k^n$ be the surjective function that maps $f$ to the coefficients of $1,T,\dots,T^{n-1}$ in $\chi_f$.  The above discussion shows that each fibre of $\pi$ contains a unique orbit that is cocharacter-closed over $k$.
\end{ex}

We finish this section by discussing representations of algebras.  As in the case $k=\mathbb{C}$ (see \cite{kraft}), the
setup in this section may be generalized by replacing the algebra $k[T]$
with some finitely generated $k$-algebra $A_0$.  Let
$A = \overline{k} \otimes_k A_0$, where $A_0$ is a finitely generated $k$-algebra,
and let $V = {\rm mod}_{A,W}$ be the variety of all $A$-module structures on $W$.
We may endow $V$ with a $k$-structure such that $V(k) = {\rm mod}_{A_0,W_0}$.
More specifically, if $A_0$ has $r$ generators, we may identify $V$ with
a subvariety of $\End(W)^r$, with $G:= {\rm GL}(W)$ acting by simultaneous conjugation.  Here instead of considering orbits of a single
endomorphism, we are now studying orbits of tuples of endomorphisms, and these orbits correspond to the isomorphism classes
of $A$-module structures.
It can be shown that the cocharacter-closed orbits (over $k$) in $V(k)$ correspond to isomorphism classes of
semisimple $A_0$-module structures on $W_0$.  Note that all ${\rm GL}(W)$-orbits in $W^n$ are separable, hence the preorder $\prec$ is antisymmetric (cf.\ Corollary~\ref{cor:GL}).  Moreover, by applying Corollary~\ref{cor:MCkdefinedstabilizer}(ii), we get a geometric proof of the algebraic fact that an $A_0$-algebra is semisimple if it becomes semisimple after extending scalars from $k$ to $\ovl{k}$.

Suppose $k$ is algebraically closed.  We say that one module is a {\em degeneration} of another if the tuple defining the first module is in the closure of the ${\rm GL}(W)$-orbit of the tuple defining the second.  This yields a partial ordering $\leq$ on the set of modules.  Zwara gave an algebraic characterization of when a module is a degeneration of another \cite[Thm.\ 1]{zwara}.  There is no obvious geometric notion of degeneration if $k$ is not algebraically closed.  Our formalism of cocharacter-closures and accessibility gives a way to approach this problem for arbitrary fields: our accessibility relation $\prec$ gives a partial order on the set of modules.  This does not coincide with the partial ordering $\leq$ when $k= \ovl{k}$, for the same reason that the Zariski closure of an orbit and its cocharacter-closure over $\ovl{k}$ are not the same.  It would be interesting to take the algebraic condition from \cite[Thm.\ 1]{zwara} and give a geometric interpretation when $k\neq \ovl{k}$.

\section{Further examples}
\label{sec:ex}

Already for $k$ algebraically closed, the notions of cocharacter-closure and
Zariski closure differ, as illustrated by our first example.
More precisely, there may exist non-accessible orbits in the Zariski closure of an orbit.
The fact that not every orbit in an orbit closure
need be $1$-accessible was already noted by Kraft in \cite[II, Rem.\ 4.6]{kraft}.

\begin{ex}
\label{ex:g2}
We consider the unipotent classes in the simple group of type $G_2$
over an algebraically closed field $k$
of characteristic $p$.
Corresponding to each choice of maximal torus $T$ of $G$ and Borel subgroup
$B$ of $G$ containing $T$, we have two simple roots $\alpha$ (short) and $\beta$ (long)
and then for each root $\gamma$ in the root system of $G$ we have a coroot $\gamma^\vee$, a root group $U_\gamma$
and a corresponding root group homomorphism $u_\gamma:k \to U_\gamma$.
We have the following information about conjugacy classes of unipotent elements
(see \cite[Table 22.1.5]{liebeckseitz:2012}, and \cite{stuhler} for the representatives):

\smallskip
\begin{center}
\begin{tabular}{|c|c|c|}
\hline
Class label & Representative & Centralizer \\
\hline
$G_2$         & $u_\alpha(1)u_\beta(1)$ & $2$ \\
$G_2(a_1)$    & $u_\beta(1)u_{2\alpha+\beta}(1)$ & $4$ \\
$(\tilde A_1)_3$ ($p=3$) & $u_\beta(1) u_{\alpha+\beta}(1)$ & $6$ \\
$\tilde A_1$  & $u_\alpha(1)$  & $A_1+3$ ($p\neq 3$), $A_1 + 5$ ($p=3$) \\
$A_1$         & $u_\beta(1)$ & $A_1+5$ \\
Trivial       & $1$                     & $G_2$ \\
\hline
\end{tabular}
\end{center}
\smallskip

The notation in the final column gives the reductive part of the centralizer plus a number denoting the
dimension of the unipotent radical of the centralizer.

\begin{figure}
\label{fig:orbits}
\centering
\begin{tikzpicture}
\matrix (m) [matrix of math nodes, row sep=1.2em, column sep=3em,
text height=1.5ex, text depth=0.25ex]
    {G_2 \\
     G_2(a_1)\\
     \tilde A_1 \\
     A_1 \\
     1 \\
     (p\neq 3)
    \\};
\path (m-1-1) edge (m-2-1);
\path (m-2-1) edge (m-3-1);
\path (m-3-1) edge (m-4-1);
\path (m-4-1) edge (m-5-1);
\end{tikzpicture}
\begin{tikzpicture}
\matrix (m) [matrix of math nodes, row sep=1.2em, column sep=-0.3em,
text height=1.5ex, text depth=0.25ex]
    {G_2 \\
     && G_2(a_1) \\
     & \tilde A_1 \\
     & A_1 \\
     & 1 \\
     & (p \neq 3)
    \\};
\path[->] (m-1-1) edge [bend right=25] (m-3-2);
\path[->] (m-1-1) edge [bend right=25] (m-4-2);
\path[->] (m-1-1) edge [bend right=25] (m-5-2);
\path[->] (m-2-3) edge [bend left=25] (m-3-2);
\path[->] (m-2-3) edge [bend left=25] (m-4-2);
\path[->] (m-2-3) edge [bend left=25] (m-5-2);
\path[->] (m-3-2) edge [bend left=20] (m-5-2);
\path[->] (m-3-2) edge (m-4-2);
\path[->] (m-4-2) edge (m-5-2);
\end{tikzpicture}
\begin{tikzpicture}
\matrix (m) [matrix of math nodes, row sep=1.2em, column sep=-0.3em,
text height=1.5ex, text depth=0.25ex]
    {& G_2 \\
     & G_2(a_1)\\
     & (\tilde A_1)_3 \\
     \tilde A_1\\
     & & A_1 \\
     & 1 \\
     & (p=3)
    \\};
\path (m-1-2) edge (m-2-2);
\path (m-2-2) edge (m-3-2);
\path (m-3-2) edge (m-4-1) edge (m-5-3);
\path (m-4-1) edge (m-6-2);
\path (m-5-3) edge (m-6-2);
\end{tikzpicture}
\begin{tikzpicture}
\matrix (m) [matrix of math nodes, row sep=1.2em, column sep=-0.3em,
text height=1.5ex, text depth=0.25ex]
    { G_2 \\
    && G_2(a_1)\\
    &&&& (\tilde A_1)_3 \\
    & \tilde A_1\\
    &&& A_1 \\
    && 1 \\
    && (p=3)
    \\};
\path[->] (m-1-1) edge (m-4-2);
\path[->] (m-4-2) edge (m-6-3);
\path[->] (m-3-5) edge (m-5-4);
\path[->] (m-5-4) edge (m-6-3);
\path[->] (m-2-3) edge (m-4-2) edge (m-5-4) edge (m-6-3);
\path[->] (m-1-1) edge [bend right=10] (m-5-4);
\path[->] (m-1-1) edge [bend right=35] (m-6-3);
\path[->] (m-3-5) edge [bend left=35] (m-6-3);
\path[->] (m-3-5) edge [bend left=10] (m-4-2);
\end{tikzpicture}
\caption{Closure relation and $1$-accessibility of unipotent classes in $G_2$}
\end{figure}
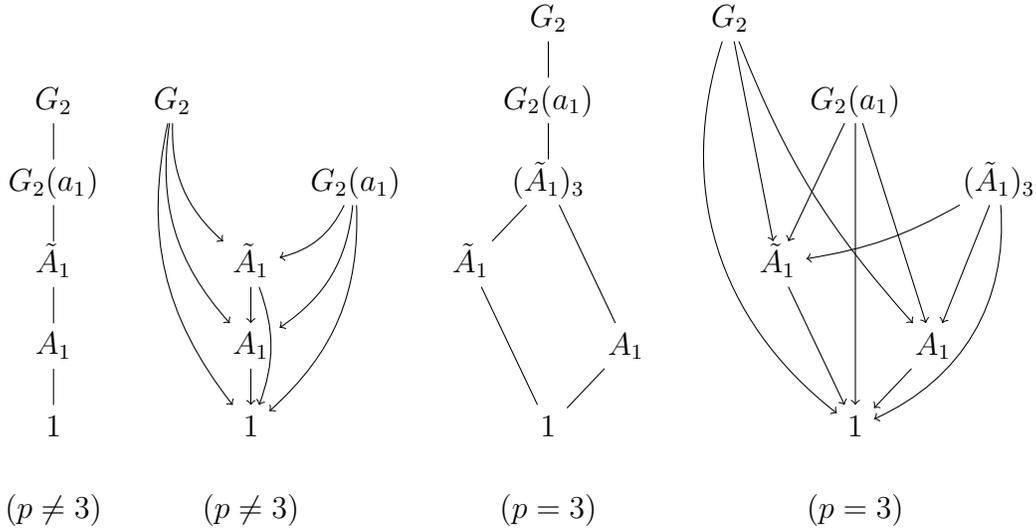

We record the closure and 1-accessibility relations in Figure \ref{fig:orbits}.
Here the first and third diagrams
give the Hasse diagrams for the partial order of containment
between orbit closures taken from 
\cite[II, Prop.\ 10.4]{spalt}.  
In particular, the (Zariski) closure of a given conjugacy class consists
of the union of that class together with those classes below it which are linked to it by a downwards path in the diagram.
The second and fourth diagrams depict the directed graphs corresponding to the
reflexive relation given by $1$-accessibility
(see Definition \ref{def:accessible}).
In this case, it turns out that $1$-accessibility is transitive
(see Example \ref{ex:fromf4} for an instance where this property fails).
The cocharacter-closure of a given class can be read off similarly from these diagrams.
In particular, it can be seen here that the cocharacter-closure
of an orbit may be strictly smaller
than the closure: e.g., the
regular unipotent class labelled $G_2$ has
the subregular class $G_2(a_1)$ in its closure, but not in its cocharacter-closure.

The correctness of the two graphs for $1$-accessibility can be verified by an argument along the following lines.
First, orbits that are not in the closure cannot be $1$-accessible. Therefore, for $p=3$,
the class $A_1$ is not $1$-accessible from $\tilde A_1$ (and vice versa for all $p$).
Second, orbits with a unipotent centralizer cannot be $1$-accessible from other orbits
(as a limit along a cocharacter
is fixed by that cocharacter).
Thus the classes $G_2(a_1)$ and $(\tilde A_1)_3$ are not $1$-accessible
from any other class.
It remains to check that all further possible $1$-accessibility relations do hold.
The trivial class is clearly $1$-accessible from every unipotent class.
For the regular class $G_2$, the representative $u_\alpha(1)u_\beta(1)$ is
destabilized by $(3\alpha+2\beta)^\vee$ to $u_\alpha(1)$, and by
$(2\alpha+\beta)^\vee$ to $u_\beta(1)$. The element
$u_\beta(1)u_{2\alpha+\beta}(1)$ in $G_2(a_1)$ is also taken by
$(2\alpha+\beta)^\vee$ to $u_\beta(1)$, and by $\beta^\vee$
to $u_{2\alpha+\beta}(1)$, which is another representative for $\tilde A_1$.
The representative $u_\alpha(1)$ of $\tilde A_1$ is conjugate by
$u_{2\alpha+\beta}(1)$ to $u_\alpha(1)u_{3\alpha+\beta}(\pm 3)$, and the latter
element is destabilized by $-(\alpha+\beta)^\vee$ to
$u_{3\alpha+\beta}(\pm 3)$. If $p\neq 3$, this is a representative for
the class labelled $A_1$.
Finally, the element $u_\beta(1)u_{\alpha+\beta}(1)$ in $(\tilde A_1)_3$ is destabilized by
$(2\alpha+\beta)^\vee$ to $u_\beta(1)$, and by
$-(3\alpha+\beta)^\vee$ to $u_{\alpha+\beta}(1)$, which represents
$\tilde A_1$.
\end{ex}

\begin{ex}
\label{ex:fromf4}
We give an elementary example which shows that $1$-accessibility is not a transitive relation.
Start with $H=\SL_2(k)$, where $k$ is an algebraically closed field, 
and let $E$ denote the natural module for $H$, with standard basis $\{e_1,e_2\}$.
Set $W = S^2(E)$, the symmetric square of $E$, and let $\{x^2,xy,y^2\}$ denote the standard basis for $W$.
Let $\lambda$ denote the diagonal cocharacter of $H$ which acts with weight $1$ on $e_1$ and weight $-1$ on $e_2$,
and with weights $2$, $0$ and $-2$ on $x^2$, $xy$ and $y^2$ respectively.

Now let $G:= H\times k^*$ and let $\mu:k^*\to G$ be the cocharacter given by $\mu(a) = (1,a)\in G$ for each $a\in k^*$.
Then the images of $\lambda$ and $\mu$ generate a maximal torus $T$ of $G$,
and $\lambda$ and $\mu$ span the cocharacter group $Y_T$.
Let the $k^*$-factor of $G$ act on $E$ with weight $-1$ and on $W$ with weight $2$.
Since these actions of $k^*$ commute with the $H$-actions,
we get actions of $G$ on $E$ and $W$, and we can combine these to get
an action of $G$ on $V:=W \oplus E$.

Consider the element $v:=xy+e_1 \in V$.
Then $\lambda(a)\cdot v = xy + ae_1$, so $\lim_{a\to 0} \lambda(a)\cdot v$ exists and equals $v':=xy$.
Now let
$u=(u_h,1)$ in $G$, where $u_h\in R_u(P_\lambda)(H)$ is the unipotent element for which $u_h\cdot xy = x^2+xy$.
Then $u\cdot v' = x^2+xy$.
Define another cocharacter $\sigma \in Y(T)$ by $\sigma = \mu-\lambda$.
We have $\sigma(a)\cdot(x^2+xy) = x^2 + a^2xy$, so $\lim_{a\to 0} \sigma(a)\cdot(x^2+xy) = x^2$.
Set $v'':=x^2$.
The above shows that, in the language of this paper, we have a sequence of orbits
$G(k) \cdot v$, $G(k) \cdot v'$, $G(k) \cdot v''$, with
$G(k) \cdot v'$ $1$-accessible from $G(k) \cdot v$ and
$G(k) \cdot v''$ $1$-accessible from $G(k) \cdot v'$.
One can see by direct calculation that these are distinct orbits.

We claim that $G(k) \cdot v''$ is not $1$-accessible from $G(k) \cdot v$, which shows that $1$-accessibility is not transitive.
Since $\lambda$ and $\mu$ generate the cocharacter group $Y(T)$ of the maximal torus $T$ of $G$,
and every cocharacter of $G$ is conjugate to one in $Y(T)$,
to check that $G(k) \cdot v''$ is not $1$-accessible from $G(k) \cdot v$, it suffices to show that
$\lim_{a\to 0} (m\lambda+n\mu)(a)\cdot(g\cdot v) \not\in G(k) \cdot v''$ for any $g \in G$ and $m,n \in \ZZ$
for which the limit exists.
Let $g = (h,b)\in G$ be an arbitrary element, and suppose $h$ has matrix
$\left(\begin{array}{cc} q&r\\s&t \end{array}\right)$ with respect to the fixed basis for $E$.
Then we can write
\begin{align*}
(m\lambda+n\mu)(a)\cdot(g\cdot v)
&=
(m\lambda+n\mu)(a)\cdot
(b^2(qrx^2 + (qt+rs)xy + sty^2) + b^{-1}(qe_1 + se_2))\\
&=
b^2(qra^{2m+2n}x^2 + (qt+rs)a^{2n}xy + sta^{-2m+2n}y^2) \\
&\phantom{{}=b^2(qra^{2m+2n}x^2 + (qt+rs)a} + b\inverse(qa^{m-n}e_1 + sa^{-m-n}e_2).
\end{align*}
Now any $G$-conjugate of $v''=x^2$ has zero component in $E$,
hence to stand any chance of the limit existing and lying in $G(k) \cdot v''$
we need to kill off the $e_1$ and $e_2$ component in the limit.
For the element $h$ to lie in $\SL_2(k)$, we can't have $q=s=0$, so we have three cases to consider.

First suppose $q\neq0$ and $s\neq 0$.
Then we need $m-n>0$ and $-m-n>0$ to kill off the $e_1$ and $e_2$ components.
But $-m-n>0$ implies $2m+2n<0$, so looking at the coefficient of $x^2$, we must have $qr=0$ and hence $r=0$.
Also, $m-n>0$ implies $-2m+2n<0$, so looking at the coefficient of $y^2$ we must have $st=0$ and hence $t=0$.
But then $h$ is not an invertible matrix, so this is impossible.

Second suppose $q=0$ and $s\neq 0$.
Then we need $-m-n>0$, i.e., $-m>n$.
For $h$ to be invertible, we must have $r\neq 0$ and
then the coefficient of $xy$ tells us that $n\geq 0$.
The inequality $-m>n$ now gives $-m>0$, and hence $-2m+2n>0$.
In this case the limit does exist and it equals $0$ if $n>0$ or $b^2rsxy$ if $n=0$.
In the first case, this is clearly not conjugate to $v''$, and in the second case the limit
is conjugate to $v'=xy$ (e.g., by the element $\left(I,(b^2rs)^{\frac{1}{2}}\right)\in G$).
As $v'\not\in G\cdot v''$, we therefore cannot get into the orbit of $v''$ when $q=0$ and $s\neq0$.

The case $q\neq0$ and $s=0$ is similar---the limit, when it exists, is either $0$ or conjugate to $v'$---and we see that $G(k) \cdot v''$ is not $1$-accessible from $G(k) \cdot v$, as claimed.
\end{ex}


\bigskip
{\bf Acknowledgments}:
The authors acknowledge the financial support of EPSRC Grant EP/L005328/1 and 
of Marsden Grants UOC1009 and UOA1021.
Part of the research for this paper was carried out while the
authors were staying at the Mathematical Research Institute
Oberwolfach supported by the ``Research in Pairs'' programme.
Also, part of this paper was written during mutual visits to Auckland, 
Bochum and York.  We are grateful to the referees for their careful reading of the paper and for helpful suggestions.


\end{document}